\documentclass[leqno,12pt]{article}
\usepackage[margin=1.3in, top = 1.3in, bottom=1.3in]{geometry}
\usepackage[utf8]{inputenc}
\usepackage{amsmath}
\usepackage{amsfonts}
\usepackage{mathrsfs}
\usepackage{tikz}
\usepackage{hyperref}
\usepackage{stmaryrd}
\usepackage{enumerate}
\usepackage{tikz-qtree}
\usetikzlibrary{automata,positioning}
\usepackage{amssymb}
\usepackage{amsthm}
\usepackage{bbm}
\usepackage{mathtools}
\usepackage{algorithm}
\usepackage{algpseudocode}
\usepackage{diffcoeff}
\usepackage{interval}
\usepackage{graphicx}
\usepackage{comment}
\usepackage{fancyhdr}
\usepackage{tikz}
\usepackage{tikz-3dplot}
\usepackage{enumitem}
\usetikzlibrary{shapes,arrows}
\rfoot{Page \thepage}

\DeclarePairedDelimiter\ceil{\lceil}{\rceil}

\theoremstyle{definition}
\newcommand{\R}{\mathbb{R}}
\newcommand{\st}[1]{\left\{#1\right\}}
\newcommand{\ip}[1]{\left\langle#1\right\rangle}

\newcommand{\N}{\mathbb{N}}

\newcommand{\E}{\mathbb{E}}

\newcommand{\grad}{\nabla}
\newcommand{\from}{\colon}
\newcommand{\seq}{\subseteq}

\newcommand{\clos}{\overline}

\newcommand{\aaa}{\mathcal{A}}
\newcommand{\fami}{\mathcal{F}}

\newcommand{\eps}{\varepsilon}

\newcommand{\p}{\partial}

\DeclareMathOperator{\inter}{int}

\DeclareMathOperator*{\argmin}{argmin}

\DeclareMathOperator{\dist}{dist}

\DeclareMathOperator{\epi}{epi}

\DeclareMathOperator{\arccosh}{arccosh}

\newtheorem{definition}{Definition}[section]
\newtheorem{theorem}{Theorem}[section]
\newtheorem{corollary}{Corollary}[section]
\newtheorem{proposition}{Proposition}[section]
\newtheorem{lemma}[theorem]{Lemma}

\newtheorem{example}[theorem]{Example}
\newtheorem{remark}[theorem]{Remark}

\newenvironment{customthm}[1]
  {\innercustomthm}
  {\endinnercustomthm}

\tikzstyle{decision} = [diamond, draw, fill=blue!20, 
text width=6em, text badly centered, node distance=3cm, inner sep=0pt]
\tikzstyle{block} = [rectangle, draw, fill=blue!20, 
text width=9em, text centered, rounded corners, minimum height=4em]
\tikzstyle{block2} = [rectangle, draw, fill=green!20, 
text width=9em, text centered, rounded corners, minimum height=4em]
\tikzstyle{line} = [draw, -latex']
\tikzstyle{cloud} = [draw, ellipse,fill=red!20, node distance=3cm,
minimum height=4em]

\counterwithin{figure}{section}

\begin{document}
\title{\vspace{-2.5cm} Stochastic and incremental subgradient methods for convex optimization on Hadamard spaces}
\author{Ariel Goodwin
\thanks{CAM, Cornell University, Ithaca, NY.
	\texttt{awg77@cornell.edu}. Research supported in part by the NSERC Postgraduate Fellowship PGSD-587671-2024.
}
\and
Adrian S. Lewis
\thanks{ORIE, Cornell University, Ithaca, NY.
\texttt{adrian.lewis@cornell.edu} Corresponding Author.
Research supported in part by National Science Foundation Grant DMS-2405685.}
\and
Genaro L\'opez-Acedo
\thanks{Department of Mathematical Analysis -- IMUS, University of Seville, 41012 Seville, Spain
\texttt{glopez@us.es}. Research supported in part by DGES, project PID2023-148294NB-I00.
}
\and
Adriana Nicolae\thanks{Department of Mathematics, Babe\c{s}-Bolyai University, 400084, Cluj-Napoca, Romania \hfill \mbox{}
\texttt{anicolae@math.ubbcluj.ro}.
Research supported in part by the Ministry of Research, Innovation and Digitization, CNCS/CCCDI -- UEFISCDI, project number PN-III-P1-1.1-TE-2019-1306, within PNCDI III.
}
}
\date{\today}

	\maketitle

\begin{abstract}
As a foundation for optimization, convexity is useful beyond the classical settings of Euclidean and Hilbert space.  The broader arena of nonpositively curved metric spaces, which includes manifolds like hyperbolic space, as well as metric trees and more general CAT(0) cubical complexes, supports primal tools like proximal operations for geodesically convex functions.  However, the lack of linear structure in such spaces complicates dual constructions like subgradients.  To address this hurdle, we introduce a new type of subgradient for functions on Hadamard spaces, based on Busemann functions.  Our notion supports generalizations of classical stochastic and incremental subgradient methods, with guaranteed complexity bounds.  We illustrate with subgradient algorithms for $p$-mean problems in general Hadamard spaces, in particular computing medians in BHV tree space.

\end{abstract}
\medskip

\noindent{\bf Key words:} convex optimization, subgradient, Hadamard space, splitting, complexity, Busemann function, mean, median, tree space
\medskip

\noindent{\bf AMS Subject Classification:}  90C48, 65Y20, 49M29	

\begin{section}{Introduction}
We consider optimization problems posed over a subset $C$ of a complete metric space $(X,d)$.  Beginning our discussion informally, we assume that $X$ is a {\em Hadamard space}, meaning that it has nonpositive curvature --- the ``CAT(0)'' property from metric geometry \cite{bridson}.  This framework covers the familiar example of Hilbert space, and all complete simply-connected Riemannian manifolds of nonpositive sectional curvature.  Examples of such manifolds include Euclidean and hyperbolic spaces, and spaces of positive-definite symmetric matrices with the affine-invariant metric \cite{boumal2023intromanifolds}.  However, the Hadamard space framework also subsumes interesting examples that are not manifolds, such as the Billera-Holmes-Vogtmann (BHV) tree space \cite{billeratree} and, more generally, all CAT(0) cubical complexes.  A simple example to keep in mind is the {\em tripod}, which consists of three copies of the halfline $\R_+ = [0,+\infty)$ with its usual metric, glued together by identifying the three copies of 0. The fundamentals of convex optimization on
Hadamard space are treated extensively in \cite{bacakbook}.

We assume that the Hadamard space $X$ has the {\em extension property\/}.  In other words, given any two points $x$ and $y$ in $X$, there exists an isometry from $\R_+$ into $X$ mapping $0$ to $x$ and the distance $d(x,y)$ to $y$.  This isometry is a {\em geodesic ray} from $x$ through $y$:  there might be several such rays, but the corresponding unit-speed path joining $x$ to $y$ (the {\em geodesic}) is unique.  For algorithmic purposes, we assume that we can readily compute such a ray, along with $d(x,y)$.  Such is the case for each of the examples we have mentioned (\cite{boumal2023intromanifolds,hayashi}).
	
For tractability, as in the classical Euclidean case, we focus on problems
\begin{equation}
\label{eqn:basic}
\inf_C f
\end{equation}
where both the feasible region $C$ and the objective $f \colon C \to \R$ are convex in the natural geodesic sense.  In general Hadamard spaces, as shown in \cite{bacak-means}, such optimization problems are solvable in principle by iterating the fundamental proximal update
\begin{equation}
\label{eqn:proximal-update}
x ~\leftarrow~ \mbox{arg}\!\min \{ f(y) + \alpha d(x,y)^2 : y \in C \}
\end{equation}
for a parameter $\alpha > 0$.  This update is sometimes easy to implement.  In the particular case
\begin{equation}
\label{eqn:distance-power}
f(x) = d^p(x,a), \qquad x \in C = X
\end{equation}
for some point $a \in X$ and exponent $p \ge 1$, the update (\ref{eqn:proximal-update}) lies on the geodesic between $a$ and $x$, with a simple closed form in the cases $p=1,2$.  In general, however, the proximal update has several disadvantages as an optimization tool.  Even in the simple case 
(\ref{eqn:distance-power}), exponents $p \ne 1,2$ entail bisection search along the geodesic, and generalizing beyond the case $C=X$ is difficult.  Moreover, the relationship between the proximal parameter $\alpha$ and the length of the update step is obscure.

Following standard optimization practice, we refine our problem (\ref{eqn:basic}) to allow for additive structure:
\begin{equation}
\label{eqn:structured}
f(x) ~=~ \sum_{i=1}^m f_i(x) \qquad (x \in C),
\end{equation}
where the component functions $f_i \colon C \to \R$ are geodesically convex.  Rather than iterating the proximal update for $f$, we can then apply a splitting approach as analyzed in \cite{bacak-means}, either stochastically choosing one component $f_i$ per iteration or cyclically applying proximal updates to each component $f_i$ in turn, with strategically chosen proximal parameters.  The convergence rate of the resulting iterates appears in \cite{OhtaPalfia2015}, under additional strong convexity conditions.  In general, as we later discuss, the analyses of \cite{bacak-means,OhtaPalfia2015} implicitly reveal that $O(\eps^{-2})$ iterations can suffice to approximate the optimal value within a tolerance $\eps>0$.
Proximal splitting provides a tractable approach to {\em $p$-mean problems} (for $p \ge 1$), where each function $f_i$ has the form (\ref{eqn:distance-power}).  The case $p=1$ is the classical \textit{Weber problem} \cite{weber} in location theory.

Since proximal updates are rarely implementable, even in the classical Euclidean case we 
must often rely instead on subgradient-based methods.  We therefore consider the following question:  
\begin{quote}
{\em Do classical subgradient methods extend to general Hadamard spaces?}   
\end{quote}
This question presents an immediate conundrum.  Whereas the proximal iteration (\ref{eqn:proximal-update}) is primal in nature, simply defined on the underlying Hadamard space $X$, subgradients in the Euclidean setting are dual objects --- linear functionals on $X$.  In the absence of any linear structure, the analogue in Hadamard space is unclear.

To handle this fundamental hurdle when the space $X$ is a manifold, the subgradient methods introduced in \cite{riemsub} and the complexity analysis of \cite{geoconvex} rely on local linearization, much like the theory of smooth optimization in \cite{boumal2023intromanifolds}.  These works view gradients and subgradients at a point $x \in X$ as elements of the tangent space $T_x X$, forcing use of the exponential map $\mbox{Exp}_x \colon T_x X \to X$ or some approximate version.  Moreover, the complexity bound in \cite{geoconvex} depends unavoidably on a lower curvature bound for $X$ \cite{bettercurv}.  

This approach is technical, and furthermore fails in general Hadamard spaces $X$ because the machinery of local linearization, duality, and lower curvature bounds is unavailable.  We argue here for an entirely different approach, one that is simpler, global, primal, and requires no lower curvature bound.  We identify subgradients with constant-speed geodesic rays in $X$, and we escape the curvature-related worst-case bounds of \cite{bettercurv} by restricting the class of objectives $f$, arriving at an algorithm with complexity analogous to the Euclidean case. 

A first step in this direction was the recent development of a \textit{horospherical subgradient algorithm} in \cite{lewis2024horoballs}.  This method applies in a general Hadamard space $X$, but requires a quasiconvexity property:  at any point $x \in C$, the level set
\[
L_x ~=~ \{y \in C : f(y) \le f(x) \}
\]
must be horospherically convex.  In non-Euclidean settings, horospherical quasiconvexity is a significant assumption,  but one that holds in a variety of interesting cases.  The algorithm relies on an oracle that returns a geodesic ray originating from $x$ and that ``supports'' $L_x$ in a certain horospherical sense.  In Euclidean space, rays opposite to normal vectors for $L_x$ suffice, mimicking the key property of subgradient directions, but in general the supporting property is more restrictive.  The algorithm then takes a step from $x$ along this supporting ray.  The resulting complexity parallels the Euclidean case.  

Horospherical ideas had appeared earlier in optimization, in works such as \cite{fenchelhadam,fan2023horospherical,convanalyshad}.  Horospheres in Hadamard spaces are limits of spheres:  in Euclidean space, they are hyperplanes.  Horospheres have the form 
$\{x \in X : b(x)=0 \}$ for {\em Busemann functions} $b$, the natural generalizations of affine functions on Euclidean spaces. Building on these analogies, various notions of Fenchel conjugation based on Busemann functions have appeared in the recent literature \cite{fenchelhadam,convanalyshad}.

Although the supporting ray oracle in \cite{lewis2024horoballs} is implementable for some interesting functions, the horospherical subgradient algorithm is difficult to apply in much generality, because unlike Euclidean subgradients, supporting rays have no obvious calculus.  Specifically, for structured objectives $f = \sum_i f_i$, we cannot easily combine supporting rays for each component $f_i$ into a supporting ray for $f$.  We therefore turn our attention instead to stochastic and incremental subgradient methods, appealing simple algorithms that employ steps computed only from individual components $f_i$. 

Our approach is inspired by the classical Euclidean complexity analysis of the stochastic subgradient method, as well as the incremental subgradient algorithm of \cite{bertseknedic}.
Quasiconvex versions of the latter algorithm appear in \cite{quasiincr, normincr}, and were considered on manifolds with curvature bounded below in \cite{manifoldquasi}, but these algorithms have the drawback that all the components $f_i$ must share a common minimizer.  This restriction rules out many interesting examples, including the $p$-mean problem, but it seems inherent to methods relying only on supporting ray oracles, which cannot distinguish between different component functions having geometrically similar level sets.  This drawback suggests the need for a stronger oracle: supporting rays give directional information but that alone does not suffice.  

The new stochastic and incremental algorithms we introduce here rely instead on a {\em Busemann subgradient} oracle. For each component function $f_i$ and point $x$ in $C$, the oracle returns a Busemann function $b$ and a ``speed'' $s \ge 0$ satisfying a subgradient-like inequality 
\[
f_i(y) - f_i(x) ~\ge~ s\big(b(y) - b(x) \big) \qquad \mbox{for all}~ y \in C.
\]
The algorithm then tracks the geodesic ray from $x$ associated with $b$ at speed $s$ for a judiciously chosen time interval, before repeating the process.
Analogous subgradient inequalities appeared in \cite{AHMADIKAKAVANDI,sumrule}, and indeed, the notion of a Busemann subgradient that we introduce is related to ideas in \cite{GigliNobili2021,sumrule}.   The algorithms we present generalize the classical stochastic subgradient method, when choosing components uniformly at random, and the incremental subgradient algorithm of \cite{bertseknedic}:  furthermore, they  enjoy the same complexity. While the $p$-mean problem serves as a familiar testing ground for our algorithms, we emphasize that Busemann subgradient ideas apply more broadly:  examples include the minimum enclosing ball problem~\cite{arnaudon2013}, and, as highlighted recently in \cite{criscitiello2025}\footnote{While revising this paper from the original posted manuscript \cite{splitting-subgradient}, we learned of subsequent developments involving the Busemann subgradient idea \cite{criscitiello2025}.}, Tyler's M-estimator \cite{tyler1987} and Horn's problem on deciding if given real vectors are the spectra of Hermitian matrices summing to zero \cite{Knutson2001}, \cite[p.\ 1]{kapovich2009}\footnote{A concrete formulation of Horn's problem in terms of
		Busemann functions appears in \cite[p.~19]{criscitiello2025},  an approach originating  
from the work of \cite{kapovich2009}.}. %

The structure of this paper is as follows. We begin by reviewing some metric geometry, and consequences of nonpositive curvature. Central to our development is the concept of \textit{direction}, so we review some large-scale geometric properties of Hadamard spaces.  This exploration leads to a new property, called \textit{Busemann subdifferentiability}, strong enough to support a useful notion of subgradients in nonlinear space.  We compare this new notion with earlier ideas about subgradients in Hadamard space, and relate it to horospherical convexity.  Remarkably, even in the Euclidean case, Busemann subdifferentiability gives a new perspective on subgradients of convex functions, allowing a fundamentally geometric or primal understanding, without explicit reference to the inner product. We show that many natural functions defined in terms of metric data are Busemann subdifferentiable, and we explore the calculus of Busemann subgradients. 

Rather than formal convex analysis, however, our aim is the development of implementable algorithms with complexity guarantees for structured convex optimization.  Notably, we remark that Busemann subdifferentiability is {\em not} preserved under addition, explaining the crucial use of splitting in Busemann subgradient-based algorithms.  The algorithms we develop enjoy complexity analyses matching their Euclidean analogues, as well as the proximal algorithms appearing in \cite{bacak-means}.  Specifically, we prove that we can approximate the optimal value (in expectation, for the stochastic method)  within a tolerance $\eps$ using $O(\eps^{-2})$ iterations.  To illustrate the approach computationally, we solve the Weber (1-mean) problem in BHV tree space \cite{billeratree}, for some small examples from \cite{sturmmean}. 	
	\end{section}

		 \begin{section}{Geodesic geometry and convexity}
		\label{sec:buse}
A metric space $(X,d)$ is a
		\textit{geodesic metric space} if every two points $x,y\in X$ can be joined by a 
		 \textit{geodesic},
		which is to say a map $\gamma$ from a closed interval $[a,b]$ into $X$ with $d(\gamma(t),\gamma(t')) = |t-t'|$
		for all $t,t'\in [a,b]$. A \textit{ray} is a geodesic with domain $\R_+$, and we say $r$ \textit{issues from}
		$x$ if $r(0) = x$. A geodesic metric space is said to be \textit{CAT(0)} if the map $t\mapsto \frac{1}{2}d(\gamma(t),a)^2$ is $1$-strongly convex for every $a\in X$ and geodesic $\gamma$. A \textit{Hadamard space} is a complete CAT(0) space. 
		In a Hadamard space, there is exactly one geodesic joining $x$ to $y$ for every $x,y\in X$. 
		We say that
		a Hadamard space has the \textit{geodesic extension property} if for every $x\neq y\in X$ there exists a 
		ray $r\from\R_+\to X$ with $r(0) = x$ and $r(t) = y$ for some $t>0$. A subset $C$ of a Hadamard 
		space $X$ is \textit{geodesically
		convex} if the geodesic between any two points in $C$ is contained in $C$. A metric space $(X,d)$ is \textit{proper} if the closed ball $B_r(x)$ is compact for every $x\in X, r > 0$.

		Henceforth, $(X,d)$ will be a Hadamard space with the geodesic extension property, and to avoid degenerate
		case-splitting in forthcoming results we will assume $X$ has at least two points. The significance of this technical
		assumption is that there always exist rays.
		Given a ray $r$ on $X$ we may associate the 
		corresponding \textit{Busemann function} $b_r\from X\to \R$ defined by
		\[b_r(z) = \lim_{t\to\infty}(d(z,r(t)) - t) ~ ~ ~ (z\in X).\]
It is easy to see that the Busemann function $b_r$ is 1-Lipschitz, convex, and satisfies $b_r(r(0)) = 0$.
		Given a ray $r$, sets of the form $b_r^{-1}((-\infty,0])$
		are called \textit{horoballs}.

			We will restrict our attention to a class of functions that interacts
		nicely with the geometry of the given Hadamard space in a way that goes beyond geodesic convexity. 
		Fundamental to our development is an appropriate notion of \textit{direction} in a Hadamard space; in the
		Euclidean setting, the role played by directions and the associated
		compactification of $\R^n$ has been emphasized as a pillar of modern
		variational analysis \cite[Chapter 3]{rockafellar}.
		
\subsection*{The boundary $X^{\infty}$}
		We first review some standard tools to study the geometry at infinity of a Hadamard 
		space, a more detailed discussion of which can be found in
		\cite[Chapter II.8]{bridson}.
		Two rays $r,r'$ are said to be \textit{asymptotic}
		if there exists a positive constant $K\geq 0$ such that $d(r(t),r'(t)) \leq K$ for all $t\geq 0$. 
		This defines an equivalence relation on rays in $X$: the set of equivalence classes is 
		denoted by $X^{\infty}$ and called the \textit{boundary of $X$ at infinity}, or simply the \textit{boundary}. Note our assumption that $X$ has at least two points and the geodesic extension property guarantees the boundary is nonempty.
		The equivalence class of a particular 
		ray $r$ is denoted by $r(\infty)$, and we say a ray $r$ has \textit{direction} $\xi \in X^\infty$ if $r$
		belongs to the equivalence class $\xi$. Given $x\in X$ and $\xi \in X^\infty$, there exists a unique
		ray $r$ issuing from $x$ such that $r(\infty) = \xi$ \cite[II.8.2]{bridson}. 
		If we fix an arbitrary 
		reference point $\bar x \in X$, we can thus identify any $\xi \in X^\infty$ with the unique ray $r_{\bar x,\xi}$ 
		issuing from $\bar x$ with direction $\xi$, and hence to the unique
		corresponding Busemann function which we may denote by either $b_{r_{\bar x,\xi}}$ or $b_{\bar x,\xi}$. 
		To avoid cumbersome notation, we henceforth
		fix such a reference point $\bar x \in X$ and denote the corresponding Busemann function 
		$b_{r_{\bar x,\xi}}$ by simply $b_\xi$.  We note the following fact \cite[Corollary II.8.20]{bridson}.
		
\begin{proposition}
\label{additive}
The Busemann function corresponding to any fixed direction depends on the reference point only through an additive constant.
\end{proposition}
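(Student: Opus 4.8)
I would first observe that the proposition is equivalent to the statement that, for any two asymptotic rays $r$ and $r'$ in $X$, the function $\psi:=b_r-b_{r'}$ is constant: two reference points $\bar x_1,\bar x_2$ give rise to the rays $r_{\bar x_1,\xi}$ and $r_{\bar x_2,\xi}$, which are asymptotic because they share the direction $\xi$, and whose Busemann functions are precisely the two candidate Busemann functions for $\xi$. The soft ingredient is a lemma: \emph{a Busemann function decreases at unit rate along any ray asymptotic to its defining ray} --- if $\rho$ is a ray with $d(\rho(s),r(s))\le K$ for all $s\ge0$, then $b_r(\rho(t))=b_r(\rho(0))-t$ for all $t\ge0$. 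The inequality ``$\ge$'' is immediate from the $1$-Lipschitz property of $b_r$, and for ``$\le$'' one estimates
\[
b_r(\rho(t)) \;=\; \lim_{s\to\infty}\bigl(d(\rho(t),r(s))-s\bigr) \;\le\; \lim_{s\to\infty}\bigl(d(\rho(t),\rho(s))+K-s\bigr) \;=\; K-t ,
\]
so that $t\mapsto b_r(\rho(t))+t$ is convex ($b_r$ precomposed with a geodesic, plus a linear term), bounded above by $K$, hence non-increasing on $\R_+$; since it equals $b_r(\rho(0))$ at $t=0$ and stays $\ge b_r(\rho(0))$ throughout, it is constant.

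Next I would apply the lemma with $\rho=r'$ (asymptotic to $r$) and combine it with the $1$-Lipschitz property of $b_r$: for every $z\in X$,
\[
b_r(z)\;\le\; b_r(r'(t))+d(z,r'(t))\;=\;\bigl(b_r(q)-t\bigr)+d(z,r'(t)) \;\xrightarrow[t\to\infty]{}\; b_r(q)+b_{r'}(z),
\]
where $q:=r'(0)$, so $\psi(z)=b_r(z)-b_{r'}(z)\le b_r(q)$; symmetrically $\psi(z)\ge -b_{r'}(p)$ with $p:=r(0)$. The upper bound is attained at $z=q$ and the lower bound at $z=p$, and either evaluation gives $b_r(q)+b_{r'}(p)\ge0$. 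Hence $\psi$ is constant if and only if $b_r(q)+b_{r'}(p)\le0$.

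This reverse inequality is the heart of the matter, and the one step where I expect genuinely to need nonpositive curvature rather than purely metric reasoning. Writing $D:=d(p,q)$ and applying the CAT(0) law of cosines in the triangle $\triangle(p,q,r(t))$ --- whose side $[p,r(t)]$ is an initial segment of $r$, so its angle at $p$ equals the Alexandrov angle $\alpha:=\angle_p([p,q],r)$ for every $t$ --- gives $d(q,r(t))^2\le D^2+t^2-2Dt\cos\alpha$, whence $b_r(q)=\lim_{t\to\infty}\bigl(d(q,r(t))-t\bigr)\le -D\cos\alpha$; symmetrically $b_{r'}(p)\le -D\cos\alpha'$ with $\alpha':=\angle_q([q,p],r')$. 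It then suffices to know $\alpha+\alpha'\le\pi$, for then $\cos\alpha+\cos\alpha'\ge0$ and $b_r(q)+b_{r'}(p)\le -D(\cos\alpha+\cos\alpha')\le0$, completing the proof. This last fact is the statement that the ideal triangle with vertices $p$, $q$ and $\xi$ has angle sum at most $\pi$; concretely it follows from the CAT(0) angle-sum bound applied in $\triangle(p,q,r(t))$ (so that its angle at $q$ is $\le\pi-\alpha$) together with the facts that the geodesics $[q,r(t)]$ converge as $t\to\infty$ to the ray issuing from $q$ in direction $\xi$, namely $r'$ \cite[II.8.2]{bridson}, and that the Alexandrov angle is lower semicontinuous. (The proposition is also recorded as \cite[Corollary II.8.20]{bridson}, so one could alternatively just invoke that reference.)
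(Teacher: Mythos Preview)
The paper does not prove this proposition at all: it simply records it as \cite[Corollary II.8.20]{bridson}.  Your final parenthetical remark (``one could alternatively just invoke that reference'') is therefore exactly what the paper does, and your proposal otherwise attempts something strictly more ambitious, namely a self-contained argument.

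The first two thirds of your argument are correct and pleasant: the reformulation in terms of $\psi=b_r-b_{r'}$, the lemma that $b_r$ decreases at unit rate along any asymptotic ray, and the sandwich $-b_{r'}(p)\le\psi(z)\le b_r(q)$ (with both bounds attained) are all fine, and they reduce the problem cleanly to showing $b_r(q)+b_{r'}(p)\le 0$.

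The final step, however, contains a genuine error.  In a CAT(0) space the Alexandrov angle is \emph{at most} the comparison angle, so the law of cosines reads
\[
d(q,r(t))^2 \;\ge\; D^2 + t^2 - 2Dt\cos\alpha,
\]
not $\le$ as you wrote.  Passing to the limit therefore gives $b_r(q)\ge -D\cos\alpha$, which is the wrong direction for your chain $b_r(q)+b_{r'}(p)\le -D(\cos\alpha+\cos\alpha')\le 0$.  (A quick sanity check: in the hyperbolic plane, two asymptotic rays from $p$ and $q$ in the same ideal direction have $\alpha+\alpha'<\pi$, so $\cos\alpha+\cos\alpha'>0$; but we already know $b_r(q)+b_{r'}(p)=0$, so the inequality $b_r(q)\le -D\cos\alpha$ cannot hold in general.)  There is also a slip in the angle-sum argument: the Alexandrov angle is \emph{upper} semicontinuous \cite[Proposition II.3.3]{bridson}, not lower semicontinuous, and upper semicontinuity applied to $[q,r(t)]\to r'$ yields $\limsup_t\beta_t\le\alpha'$, which does not bound $\alpha'$ from above.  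So the route via angles, as written, does not close.  The reduction you obtained is correct, but the inequality $b_r(q)+b_{r'}(p)\le 0$ needs a different justification (for instance, the approach Bridson--Haefliger take around II.8.19--8.22).
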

		
		The space $X^\infty$ is naturally endowed with the so-called \textit{cone topology} 
		\cite[Definition II.8.6]{bridson}, and notably this space is first-countable 
		\cite[Proof of Theorem II.8.13]{bridson}. 
				In particular, the cone topology is completely specified by 
		the convergent sequences, so let us mention that $\xi_n \to \xi \in X^\infty$ if and only if 
		$b_{\xi_n} \to b_{\xi}$ uniformly on bounded subsets of $X$.  The latter property, and hence the cone topology, is independent of the implicit basepoint (cf.\ \cite[Proposition II.8.8]{bridson}), due to Proposition \ref{additive}.

 If $X$ is proper then $X^\infty$ is compact 
		\cite[Definition II.8.6]{bridson}. Our interest in topologizing $X^\infty$ comes from optimization: it will be
		desirable to have a large class of compact subsets of $X^\infty$ so that certain continuous functions defined on 
		$X^\infty$ will attain their maximum. 
		In the literature, the set $X^\infty$ is often endowed with a stronger metric topology 
		induced by an angular metric (see for example \cite[Proposition II.9.7]{bridson}, \cite{convanalyshad}).
		This angular metric
		is convenient for the study of large-scale geometry of the space $X$ but is often too strong to be useful for 
		our purposes. For example, the boundary at infinity of 
		the $n$-dimensional hyperbolic
		space $\mathbb{H}^n$ is discrete in this angular metric, whereas it is homeomorphic to $\mathbb{S}^{n-1}$ in the
		 cone topology (see Example \ref{ex:conepoincare}). 
		In particular, the only compact sets are finite when the angular metric is used. Henceforth we will only
		make use of the cone topology.

		Convergence in $X^\infty$ can be understood more geometrically via
		pointwise convergence of rays, a fact stated in \cite[p.7]{duchesne2016} without proof, and also noted in \cite[p.29]{Ballmann1995Lectures}.
		The next proposition is a precise formulation:  for completeness, we include a proof in the appendix.
		The proof and result resemble \cite[Proposition II.8.19]{bridson}, but that result
concerns unbounded sequences in $X$ that converge to points in $X^\infty$ whereas we emphasize sequences 
		converging in $X^\infty$ itself.
		\begin{proposition}[\bf Convergence of directions]
			\label{prop:boundconv}
			Let $X$ be a Hadamard space with boundary $X^\infty$, let $\st{\xi_n}_{n=1}^\infty\seq X^\infty$ be a sequence, and
			let $\xi\in X^\infty$. For each $n=1,2,3,\ldots$, denote the ray issuing from $\bar x \in X$ with direction $\xi_n$ by
			$r_n$, and let $r$ be the ray issuing from $\bar x$ with direction $\xi$.
			Then $\xi_n\to \xi$ in $X^\infty$ if and only if $r_n(\delta)\to r(\delta)$ for all $\delta > 0$.
		\end{proposition}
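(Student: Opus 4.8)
The plan is to work entirely through the characterization of the cone topology recorded just above the proposition: $\xi_n \to \xi$ in $X^\infty$ precisely when $b_{\xi_n} \to b_\xi$ uniformly on bounded subsets of $X$. It therefore suffices to show that this uniform convergence is equivalent to the pointwise convergence $r_n(\delta) \to r(\delta)$ for every $\delta > 0$, and I would prove the two implications separately. In both I will repeatedly use the two elementary identities $b_{\xi_n}(r_n(\delta)) = -\delta$ and $b_\xi(r(\delta)) = -\delta$, which are immediate from the definition of the Busemann function of a ray issuing from $\bar x$, together with the facts (noted in the excerpt) that each Busemann function $b_\xi$ is convex, $1$-Lipschitz, and vanishes at $\bar x$.

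For the implication ``$\xi_n \to \xi$ implies $r_n(\delta) \to r(\delta)$'', fix $\delta > 0$ and set $\eps_n = \sup_{B_\delta(\bar x)} |b_{\xi_n} - b_\xi|$, so $\eps_n \to 0$. Both $r_n(\delta)$ and $r(\delta)$ lie in the closed ball $B_\delta(\bar x)$, so let $m_n$ be the midpoint of the geodesic joining them. Since $X$ is CAT(0), the map $s \mapsto \tfrac12 d(\gamma(s),\bar x)^2$ is $1$-strongly convex along the unit-speed geodesic $\gamma$ from $r_n(\delta)$ to $r(\delta)$, and evaluating at the midpoint yields $d(\bar x, m_n)^2 \le \delta^2 - \tfrac14 d(r_n(\delta), r(\delta))^2$. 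On the other hand, convexity of $b_\xi$, together with $b_\xi(r(\delta)) = -\delta$ and $b_\xi(r_n(\delta)) = b_{\xi_n}(r_n(\delta)) + (b_\xi - b_{\xi_n})(r_n(\delta)) \le -\delta + \eps_n$, gives $b_\xi(m_n) \le -\delta + \tfrac12\eps_n$; since $b_\xi$ is $1$-Lipschitz with $b_\xi(\bar x) = 0$, this forces $d(\bar x, m_n) \ge \delta - \tfrac12\eps_n$ once $\eps_n < 2\delta$. Combining the two bounds gives $\tfrac14 d(r_n(\delta), r(\delta))^2 \le \delta\eps_n$, hence $d(r_n(\delta), r(\delta)) \le 2\sqrt{\delta\eps_n} \to 0$.

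The reverse implication is harder, and the key ingredient is a quantitative estimate for how well a sphere approximates a horosphere, uniform over rays issuing from $\bar x$: for every such ray $r$, every $z \in X$ with $\rho := d(\bar x, z)$, and every $t > \rho$,
\[
0 \;\le\; \big(d(z, r(t)) - t\big) - b_r(z) \;\le\; \frac{\rho^2}{2(t-\rho)}.
\]
The left inequality is just monotonicity: $t \mapsto d(z,r(t)) - t$ is non-increasing and decreases to $b_r(z)$. For the right inequality I would compare the geodesic triangle in $X$ with vertices $\bar x = r(0)$, $r(T)$, $z$ (for $T>t$) to its Euclidean comparison triangle; the CAT(0) inequality bounds $d(z,r(t))$ by the distance in the comparison triangle from $\bar z$ to the comparison point of $r(t)$, which is an explicit Euclidean expression in $t$, $T$, $\rho$ and $d(z,r(T))$. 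Letting $T \to \infty$ and using $d(z,r(T)) - T \to b_r(z)$ collapses this to $d(z,r(t))^2 \le t^2 + 2t\,b_r(z) + \rho^2 = (t + b_r(z))^2 + (\rho^2 - b_r(z)^2)$, and since $|b_r(z)| \le \rho$ a one-line square-root estimate gives the stated bound. Granting this lemma, for $z$ in a fixed ball $B_R(\bar x)$ and $t > R$ the triangle inequality together with $b_{\xi_n}(z) = \lim_t(d(z,r_n(t)) - t)$ yields
\[
|b_{\xi_n}(z) - b_\xi(z)| \;\le\; \frac{R^2}{t-R} + d\big(r_n(t), r(t)\big);
\]
choosing $t$ large to control the first term and then $n$ large (using $r_n(t) \to r(t)$) to control the second gives uniform convergence on $B_R(\bar x)$, and hence on every bounded set, so $\xi_n \to \xi$.

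The step I expect to be the genuine obstacle is this reverse implication, specifically the need for an error bound uniform in $n$. Pointwise convergence $b_{\xi_n}(z) \to b_\xi(z)$ together with equi-$1$-Lipschitzness is \emph{not} enough to conclude uniform convergence on bounded sets, because bounded sets in a Hadamard space need not be compact --- for instance, in infinite-dimensional Hilbert space the functionals $\langle e_n,\cdot\,\rangle$ converge pointwise to $0$ but not uniformly on the unit ball. The quantitative horosphere-approximation lemma is exactly what bridges this gap; the remaining arguments are routine triangle-inequality and comparison-geometry manipulations.
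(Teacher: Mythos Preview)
Your argument is correct, and while it shares the paper's overall strategy (working through the characterisation of cone convergence as uniform convergence of Busemann functions on bounded sets), the execution differs in both directions. The paper outsources the key estimates to lemmas from Bridson--Haefliger (II.8.21(1), II.8.21(2), II.8.22), whereas you rederive everything from CAT(0) first principles. For the forward implication, the paper first shows $\{r_n(\delta)\}$ is Cauchy via the inequality $d(r_n(\delta),r_m(\delta))^2/(2\delta)\le b_{\xi_m}(r_n(\delta))+\delta$, and then separately identifies the limit as $r(\delta)$ by appealing to uniqueness of the minimiser of $b_\xi$ on the $\delta$-sphere; your midpoint-and-convexity argument reaches $d(r_n(\delta),r(\delta))\le 2\sqrt{\delta\eps_n}$ in a single step, which is tidier. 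For the reverse implication, the paper invokes the existential statement that for each $\eps$ there is an $R$ (uniform over all rays from $\bar x$) with $0\le d(z,r(R))+t-d(z,r(R+t))\le\eps$, and from this derives a uniform Cauchy criterion for $\{b_{\xi_n}\}$; your explicit bound $(d(z,r(t))-t)-b_r(z)\le \rho^2/(2(t-\rho))$ is precisely a quantitative version of that lemma, obtained by a direct comparison-triangle calculation, and it lets you conclude $b_{\xi_n}\to b_\xi$ outright rather than through a Cauchy argument. Your closing diagnosis is exactly on point: the uniform-in-$n$ horosphere approximation is what makes the reverse direction work in a space whose bounded sets need not be totally bounded, and both proofs supply it in essentially the same way.
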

		
\begin{example}[\bf Boundary of Euclidean space]
For the space $X = \R^n$, Proposition \ref{prop:boundconv} shows that the boundary $X^\infty$ is homeomorphic to the sphere $\mathbb{S}^{n-1}$.
\end{example}

		\begin{example}[\bf Boundary of the tripod]
			\label{ex:tripod}
			The \textit{tripod} is the Hadamard space $X$
			consisting of three copies of the half-line $\R_+$ glued together at the common point $0$.
			A natural choice for the reference point $\bar x$ is this common origin.
			Two rays are asymptotic if and only if they eventually lie in the same copy of $\R_+$, so 
			the boundary $X^\infty$ comprises three equivalence classes:
			$X^\infty = \st{\xi_1,\xi_2,\xi_3}$. Denoting a point in $X$ by $(x,j)$, where $x \in \R_+, j\in 
			\st{1,2,3}$ we can write the Busemann functions explicitly:
			\[b_{\xi_i}(x,j) = \begin{cases}
					x, & i \neq j\\
					-x, & i = j.
				\end{cases}
			\]
			Proposition \ref{prop:boundconv} tells us that a sequence of points in $X^\infty$ converges
			to a limit $\zeta\in X^\infty$
			if and only if the corresponding rays issuing from the origin converge pointwise to the ray defined by 
			$\zeta$. Clearly a sequence of rays in the tripod can only converge pointwise
			if it is eventually constant, i.e.\ the only convergent sequences in $X^\infty$ are eventually constant.
			Thus the boundary $X^\infty = \st{\xi_1,\xi_2,\xi_3}$ has the discrete topology. 
		\end{example}

\begin{example}[\bf Boundary of hyperbolic space]
			\label{ex:conepoincare}
			In this example we characterize the cone topology for the boundary of
			hyperbolic space $X = \mathbb{H}^n$. We use the Poincar\'{e} ball model, viewing $X$
			as the open unit ball in $\R^n$ with
			metric $d(p,q) = \arccosh\left(1 + 2\frac{\|p-q\|^2}{(1-\|p\|^2)(1-\|q\|^2)}\right)$.
			Proposition \ref{prop:boundconv}
			says a sequence
			$\st{\xi_n}_{n=1}^\infty \seq X^\infty$ converges to $\xi \in X^\infty$ if and only if 
			the corresponding rays $\st{r_n}_{n=1}^\infty$ issuing from $\bar x = 0 \in \mathbb{H}^n$ converge
			pointwise to the ray defined by $\xi$. Such rays 
			are radial lines from the origin to the corresponding boundary point $\xi_n$ ---we have once again
			identified $X^\infty$ with the boundary sphere $\mathbb{S}^{n-1}$.
			Explicitly, we can write $r_n(t) = \tanh(t/2)\xi_n$. Then
			\begin{align*}
				\xi_n \to \xi \text{ in the cone topology }&\iff r_n(\delta) \to r(\delta) \text{ for all } \delta > 0\\
				&\iff \|\tanh(\delta/2)\xi_n - \tanh(\delta/2)\xi\| \to 0 \text{ for all } \delta > 0\\
				&\iff \|\xi_n -\xi\| \to 0.
			\end{align*}
			We see that convergence in the cone topology on $X^\infty$ is precisely norm convergence
			for the induced Euclidean norm on $\mathbb{S}^{n-1}$. In particular, $X^\infty$ has a familiar and 
			rich topological structure with many compact subsets.
				\end{example}

\subsection*{The boundary cone $CX^{\infty}$}				
		Consider the product space $X^\infty\times \R_+$ endowed with the product of the cone topology
		on $X^\infty$ and the usual topology on $\R_+$. Define an equivalence relation $\sim$ on $X^\infty\times\R_+$
		by $(\xi,s) \sim (\xi',s')$ if $s = s' = 0$ or $(\xi,s) = (\xi',s')$.
		Now define the \textit{boundary cone} $CX^\infty$ as the quotient of $X^\infty \times\R_+$ by $\sim$, endowed with the 			quotient topology which we will also refer to as the cone topology (context should always prevent any confusion). 
		The equivalence
		class of $(\xi,s) \in X^\infty\times \R_+$ is denoted by $[\xi,s]$. We will sometimes use the notation $[0]$ to denote the 		equivalence class corresponding to $s = 0$. Note that continuity of a function defined on $CX^\infty$
		is equivalent to sequential continuity because of the sequentially characterized topology on $X^\infty \times \R_+$. 
		The cone topology on $CX^\infty$ has been used to study the geometry of 
		Wasserstein space $\mathscr{W}_2(X)$ where $X$ is a Hadamard
		space \cite{wassersteincone}.
As we have seen, when $X$ is the Euclidean space $\R^n$ or hyperbolic space $\mathbb{H}^n$, the boundary $X^{\infty}$ is homeomorphic to the Euclidean sphere $\mathbb{S}^{n-1}$ and the boundary cone $CX^{\infty}$ is homeomorphic to $\mathbb{R}^n$.  On the other hand, the boundary cone for the tripod is itself homeomorphic to the tripod.

		To shed some light on the cone topology for $CX^\infty$ we give a partial characterization
		of convergence in terms of convergence in $X^\infty \times \R_+$.  The proof is routine, but technical, so we defer it to the appendix.

		\begin{lemma}[\bf Convergence in the boundary cone]
			\label{lem:convergence}
			Let $\st{[\xi_n,s_n]}_{n=1}^\infty \seq CX^\infty$ be a sequence and let $[\xi,s]\in CX^\infty$. Then 
			\[ [\xi_n,s_n] \to [\xi,s] \begin{cases}
					\implies s_n \to s, & s = 0\\
					\iff s_n \to s \text{ and } \xi_n\to \xi, & s \neq 0.
			\end{cases}\]
			If $X$ is proper then the converse implication also holds in the case $s = 0$.
					\end{lemma}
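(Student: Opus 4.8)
The plan is to route everything through the quotient map $q \from X^\infty \times \R_+ \to CX^\infty$, $q(\xi,s) = [\xi,s]$, after first recording three elementary facts about it. (i) The ``scale'' map $\sigma \from CX^\infty \to \R_+$ sending $[\xi,s]$ to $s$ is well defined, because $\sim$-equivalent pairs share their second coordinate; and it is continuous, since $\sigma \circ q$ is a coordinate projection, so continuity of $\sigma$ follows from the universal property of the quotient topology. (ii) The singleton $\{[0]\}$ is closed, because its $q$-preimage $X^\infty \times \{0\}$ is closed; hence $CX^\infty \setminus \{[0]\}$ is open. (iii) The restriction $q|_{X^\infty \times \R_{>0}}$ is an open embedding onto $CX^\infty \setminus \{[0]\}$: it is injective (no two pairs with positive second coordinate are identified) and continuous, and it is open because $q^{-1}(q(U)) = U$ for every open $U \seq X^\infty \times \R_{>0}$, the identifications living entirely on the zero section.

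With these in hand, I would dispatch the case $s \neq 0$ as follows. The implication ``$s_n \to s$ and $\xi_n \to \xi$'' $\Rightarrow$ ``$[\xi_n,s_n] \to [\xi,s]$'' is immediate from continuity of $q$. For the reverse, if $[\xi_n,s_n] \to [\xi,s]$ with $s>0$ then, $CX^\infty \setminus \{[0]\}$ being an open neighborhood of $[\xi,s]$ by (ii), we get $s_n > 0$ for all large $n$; on that tail the sequence lies in $CX^\infty \setminus \{[0]\}$, which by (iii) is homeomorphic to $X^\infty \times \R_{>0}$, and transporting the convergence back through $q^{-1}$ yields $(\xi_n,s_n) \to (\xi,s)$, i.e.\ $s_n \to s$ and $\xi_n \to \xi$.

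For $s = 0$, the asserted implication $[\xi_n,s_n] \to [0] \Rightarrow s_n \to 0$ is just continuity of $\sigma$ from (i). The converse, under the properness hypothesis, I would prove with a tube-lemma argument: properness of $X$ makes $X^\infty$ compact (as noted earlier), so for any open $V \ni [0]$ the open set $q^{-1}(V) \seq X^\infty \times \R_+$ contains the compact slice $X^\infty \times \{0\}$ and therefore contains a tube $X^\infty \times [0,\eps)$ for some $\eps > 0$; if $s_n \to 0$ then $s_n < \eps$ eventually, so $(\xi_n,s_n) \in q^{-1}(V)$ and $[\xi_n,s_n] \in V$ eventually, giving $[\xi_n,s_n] \to [0]$.

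I expect the tube-lemma step to be the only real obstacle, and it is exactly there that compactness of $X^\infty$ --- hence properness of $X$ --- is needed: without it a neighborhood of the zero section can fail to contain any tube, and the converse breaks down. A lesser point to keep in mind is that $CX^\infty$ is non-Hausdorff, so sequential limits are not unique; the argument sidesteps this by reading off the eventual sign of $s_n$ from openness of $CX^\infty \setminus \{[0]\}$ rather than from uniqueness of limits.
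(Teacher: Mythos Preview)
Your proof is correct and follows essentially the same route as the paper's: both arguments hinge on the saturation identities $q^{-1}(q(U))=U$ for suitable $U$, and the tube-lemma step in the proper case is identical. Your packaging via the continuous scale map $\sigma$ and the open embedding $q|_{X^\infty\times\R_{>0}}$ is a cleaner abstraction of what the paper does by hand with explicit basic open sets $W_\eps=q(X^\infty\times[0,\eps))$ and $W_{U,\eps}=q(U\times(s-\eps,s+\eps))$, but the underlying ideas coincide.

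One small correction to your closing remark: $CX^\infty$ is in fact Hausdorff. The boundary $X^\infty$ with the cone topology is Hausdorff (as a subspace of the Hausdorff compactification $\bar X$; see \cite[II.8]{bridson}), and the apex $[0]$ can always be separated from any $[\xi,s]$ with $s>0$ using $q(X^\infty\times[0,s/2))$ and $q(X^\infty\times(s/2,\infty))$, while points away from the apex are separated via your open embedding (iii). So the non-uniqueness of sequential limits you allude to does not occur, and that caveat can be dropped.
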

\noindent
The converse implication in Lemma \ref{lem:convergence} can fail if $X$ is not proper:  see the appendix.

		Now define the \textit{pairing} on $X \times CX^\infty$ to be the function 
		\[\ip{\cdot,\cdot} \from X\times CX^\infty \to \R,~ ~ ~ \ip{x,[\xi,s]} = \begin{cases}
				sb_\xi(x), & s > 0\\
		0, & s = 0.\end{cases}\]
		This suggestive notation will be put to use in the next section, after we prove here some important properties 
		of the pairing.
		\begin{proposition}[\bf Properties of the pairing]
			\label{prop:paircont}
			The pairing $\ip{\cdot,\cdot}\from X\times CX^\infty \to \R$ has the following properties:
			\begin{enumerate}
				\item[(i)] It is continuous.
				\item[(ii)] For all $[\xi,s]\in CX^\infty$, the map $\ip{\cdot, [\xi,s]} \from X \to \R$ is geodesically convex and $s$-Lipschitz.
				\item[(iii)] The pairing is positively homogeneous in the second
					argument: $\ip{\cdot,[\xi,\alpha s]} =\alpha \ip{\cdot,[\xi,s]}$ for all $[\xi,s] \in CX^\infty$ and $\alpha \geq 0$.
				\end{enumerate}
			\end{proposition}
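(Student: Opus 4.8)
The plan is to treat parts (ii) and (iii) as immediate consequences of the basic properties of Busemann functions, and then to devote the real effort to the continuity claim (i).

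For (ii), fix $[\xi,s]\in CX^\infty$. If $s=0$ then $\ip{\cdot,[\xi,s]}$ is the zero function, which is trivially geodesically convex and $0$-Lipschitz. If $s>0$ then $\ip{\cdot,[\xi,s]}=sb_\xi$; since $b_\xi$ is geodesically convex and $1$-Lipschitz (as recorded just after the definition of Busemann functions) and $s\ge 0$, the function $sb_\xi$ is geodesically convex and $s$-Lipschitz. For (iii), I would first note that $[\xi,\alpha s]$ is a legitimate element of $CX^\infty$ for every $\alpha\ge 0$ because the class $[0]$ ignores the first coordinate. If $\alpha=0$ or $s=0$ then $[\xi,\alpha s]=[0]$ and $\alpha\ip{\cdot,[\xi,s]}\equiv 0$, so both sides agree; if $\alpha,s>0$ then $\ip{x,[\xi,\alpha s]}=(\alpha s)b_\xi(x)=\alpha\bigl(sb_\xi(x)\bigr)=\alpha\ip{x,[\xi,s]}$.

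The substance is part (i). Since $X$ is metrizable and the cone topology on $CX^\infty$ is sequential (it is a quotient of the first-countable space $X^\infty\times\R_+$), I would establish continuity by verifying sequential continuity of the pairing: take $x_n\to x$ in $X$ and $[\xi_n,s_n]\to[\xi,s]$ in $CX^\infty$ and show $\ip{x_n,[\xi_n,s_n]}\to\ip{x,[\xi,s]}$. The two standing facts I would use are that $\{x_n\}$ is bounded in $X$, and that $|b_{\xi'}(z)|=|b_{\xi'}(z)-b_{\xi'}(\bar x)|\le d(z,\bar x)$ for every direction $\xi'$, since $b_{\xi'}$ is $1$-Lipschitz with $b_{\xi'}(\bar x)=0$. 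Now split according to Lemma \ref{lem:convergence}. If $s=0$, the lemma gives $s_n\to 0$, and then $|\ip{x_n,[\xi_n,s_n]}|\le s_n\, d(x_n,\bar x)\to 0=\ip{x,[\xi,s]}$ because $\{d(x_n,\bar x)\}$ is bounded. If $s\neq 0$, the lemma gives both $s_n\to s$ and $\xi_n\to\xi$ in $X^\infty$; in particular $s_n>0$ eventually, so $\ip{x_n,[\xi_n,s_n]}=s_nb_{\xi_n}(x_n)$ for large $n$. The convergence $\xi_n\to\xi$ means precisely that $b_{\xi_n}\to b_\xi$ uniformly on bounded subsets of $X$, so from $|b_{\xi_n}(x_n)-b_\xi(x)|\le|b_{\xi_n}(x_n)-b_\xi(x_n)|+|b_\xi(x_n)-b_\xi(x)|$ the first term tends to $0$ by uniform convergence on the bounded set $\{x_n:n\ge 1\}\cup\{x\}$ and the second tends to $0$ by $1$-Lipschitzness of $b_\xi$ together with $x_n\to x$. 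Multiplying by $s_n\to s$ gives $\ip{x_n,[\xi_n,s_n]}\to sb_\xi(x)=\ip{x,[\xi,s]}$.

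I do not anticipate a serious obstacle here. The one point requiring a little care is the reduction of (i) to sequential continuity, which rests on the already-noted fact that the relevant topologies are determined by their convergent sequences; beyond that, everything is a matter of correctly invoking Lemma \ref{lem:convergence} in each of its two cases and feeding in the Busemann-function description of convergence in $X^\infty$. The case $s=0$ is the only place where the case split in the definition of the pairing plays a role, and there the $1$-Lipschitz bound on $b_{\xi'}$ does all the work.
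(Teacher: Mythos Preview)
Your proposal is correct and follows essentially the same approach as the paper: parts (ii) and (iii) are dispatched immediately from the basic properties of Busemann functions, and part (i) is reduced to sequential continuity, split into the cases $s=0$ and $s>0$ via Lemma~\ref{lem:convergence}, with the $1$-Lipschitz bound $|b_{\xi'}(z)|\le d(z,\bar x)$ handling the first case and the Busemann-function characterization of convergence in $X^\infty$ handling the second. The only cosmetic difference is that in the $s>0$ case the paper splits $b_{\xi_n}(x_n)-b_\xi(x)$ through $b_{\xi_n}(x)$ (using Lipschitzness of $b_{\xi_n}$ first, then pointwise convergence at $x$), whereas you split through $b_\xi(x_n)$ (using uniform convergence on the bounded set $\{x_n\}\cup\{x\}$ first, then Lipschitzness of $b_\xi$); both are equally valid.
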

		\begin{proof}
			(i) Due to our earlier remarks on the sequential nature of the  
			topology of $CX^\infty$, it suffices to prove that the pairing is sequentially continuous.
			Suppose that $x_n \to x$ in $X$ and $[\xi_n,s_n]\to [\xi,s]$ in $CX^\infty$. If $s = 0$, then $s_n \to 0$
			by Lemma \ref{lem:convergence}, and furthermore:
			\[|b_{\xi_n}(x_n)| = |b_{\xi_n}(x_n) - b_{\xi_n}(\bar x)| \leq d(x_n,\bar x) \to d(x,\bar x).\]
			Thus the sequence $\st{b_{\xi_n}(x_n)}_{n=1}^\infty$ is a bounded sequence of real numbers, whence 
			\[\ip{x_n,[\xi_n,s_n]} = s_nb_{\xi_n}(x_n) \to 0 = \ip{x,[0]}\]
			since $s_n\to 0$. If $s > 0$ then $s_n\to s$ and $\xi_n\to \xi$
			by Lemma \ref{lem:convergence}, giving 
			\[|b_{\xi_n}(x_n) - b_{\xi_n}(x)| \leq d(x_n,x) \to 0.\]
			Knowing that $b_{\xi_n}(x) \to b_\xi(x)$ from the definition of convergence in $X^\infty$, 
			we deduce that $\st{b_{\xi_n}(x_n)}_{n=1}^\infty$ and $\st{b_{\xi_n}(x)}_{n=1}^\infty$ share the common limit $b_\xi(x)$.
			It follows that 
			\[\ip{x_n,[\xi_n,s_n]} = s_nb_{\xi_n}(x_n)\to sb_\xi(x) = \ip{x,[\xi,s]}.\]
			Thus $\ip{\cdot,\cdot}$ is continuous.

			(ii) Let $[\xi,s]\in CX^\infty$ and consider the map $x\mapsto \ip{x,[\xi,s]}$ from $X$ to $\R$. If $s = 0$ then this is the constant
			function zero which is trivially convex and $0$-Lipschitz. Otherwise, this map is $x\mapsto sb_\xi (x)$. Busemann functions
			are geodesically convex and 1-Lipschitz, and $s > 0$ so this map is also geodesically convex and 
			$s$-Lipschitz. 

			(iii) This property is immediate from the definition. 
			\end{proof}
			
Our choice of the cone topology for the boundary $X^{\infty}$ balances two benefits.  On the one hand, the topology is weak enough to allow a wealth of compact sets in $X^\infty$, and hence also in the boundary cone $CX^\infty$.  On the other hand, the topology is strong enough to allow a wealth of continuous functions on $CX^\infty$, namely $[\xi,s] \mapsto \ip{x,[\xi,s]}$ for $x\in X$.

		\end{section}

		\begin{section}{Busemann subgradients and envelopes}
		Our development relies fundamentally on the following definition.	
		\begin{definition}
			\label{def:epihoro}
			Consider a nonempty subset $C$ of $X$ and a real-valued function $f\from C \to \R$.
			A \textit{Busemann subgradient} of $f$ at a point $x\in C$ is an element $[\xi,s] \in CX^\infty$
			such that $x$ minimizes $y\mapsto f(y) - \ip{y,[\xi,s]}$ over $C$. The function $f$ is \textit{Busemann subdifferentiable} if it has a Busemann subgradient at every point in $C$.

	\end{definition}
	
\noindent
An immediate consequence of the definition is that a point $x \in C$ minimizes a function $f\from C\to\R$ if and only if $[0]$ is a Busemann subgradient of $f$ at $x$.
		
		 When the space $X$ is $\R^n$, the boundary at infinity can be identified with
		$\mathbb{S}^{n-1}$ and the Busemann function $b_\xi$ has the form
		$b_\xi(y) = (\bar x-y)^T\xi$ (recall the fixed reference point $\bar x$). Then Definition \ref{def:epihoro} says $[\xi,s]$ is a Busemann subgradient at $x$
		if and only if
		\[f(y) - s\xi^T(\bar x-y) \geq f(x) - s\xi^T(\bar x - x) \text{ for all } y\in C,\]
		where we interpret the expression $s\xi$ here and throughout as identically $0$ if $s = 0$.
		This is equivalent to
		\[f(y) \geq f(x) -s\xi^T(y-x) \text{ for all } y\in C.\]
		In other words, Busemann subgradients for functions on Euclidean space coincide with the usual notion of subgradient
		via the identification $[\xi,s] \leftrightarrow -s\xi$. 
		
		More generally, $[\xi,s]$ is a Busemann subgradient of $f\from C\to\R$ at $x$ if and only if
		\begin{equation}
			\label{eqn:busesubpre}
			f(y) - sb_\xi(y) \geq f(x) - sb_\xi(x) \text{ for all } y\in C.
		\end{equation}
		We adopt the same convention that $sb_\xi$ is identically $0$ if $s = 0$.
		The Busemann functions for asymptotic rays differ only by an additive constant 
		(Proposition \ref{additive}), so it follows that inequality
		(\ref{eqn:busesubpre}) is equivalent to 
		\begin{equation}
			\label{eqn:busesub}
			f(y) \geq f(x) + sb_{x,\xi}(y) \text{ for all } y\in C,
		\end{equation}
		a version of the subgradient inequality in Hadamard space.
		An immediate consequence of this inequality is
		that $f\from C\to \R$ is lower semicontinuous at any point where it has 
		a Busemann subgradient because Busemann functions themselves are continuous.
		We interpret Busemann subgradients $[\xi,s]$ for $f$ at $x$ as rays issuing from $x$ in direction
		$\xi$ with speed $s\geq 0$.
		
		The following example is central for us.  We revisit the result in a broader setting later in the section.

\begin{proposition}\textbf{(Busemann subgradients of distance functions)}
\label{distanceprop}
For any point $a\in X$, the function $x\mapsto d(x,a)$ is Busemann subdifferentiable
on $X$. At the point $a$, one Busemann subgradient is $[0]$.  At any point $x \ne a$, 
each ray $r$ issuing from $x$ and passing through $a$ gives rise to a Busemann subgradient $[r(\infty),1]$.
\end{proposition}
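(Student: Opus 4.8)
The plan is to verify the Busemann subgradient inequality (\ref{eqn:busesub}) directly, which for $f = d(\cdot,a)$ and speed $s=1$ asks that
\[
d(y,a) ~\ge~ d(x,a) + b_{x,\xi}(y) \qquad \text{for all } y \in X .
\]
First I would dispose of the point $x = a$: since $d(a,a) = 0 \le d(y,a)$ for every $y$, the point $a$ minimizes $d(\cdot,a)$ over $X$, so by the remark immediately following Definition \ref{def:epihoro} the element $[0]$ is a Busemann subgradient of $d(\cdot,a)$ at $a$.

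Now fix $x \ne a$ and a ray $r$ issuing from $x$ and passing through $a$; such a ray exists by the geodesic extension property. Since $r$ is unit-speed with $r(0) = x$, we have $a = r(t_0)$ where $t_0 := d(x,a) > 0$. Set $\xi := r(\infty)$. Because $r$ is the unique ray issuing from $x$ with direction $\xi$, its Busemann function $b_{x,\xi}$ is precisely $b_r$, so it suffices to prove $d(y,a) \ge d(x,a) + b_r(y)$ for all $y$. The key step is a one-line triangle-inequality estimate: for $t \ge t_0$ one has $d(a,r(t)) = d(r(t_0),r(t)) = t - t_0$, hence
\[
d(y, r(t)) - t ~\le~ \big( d(y,a) + d(a,r(t)) \big) - t ~=~ d(y,a) - t_0 .
\]
Letting $t \to \infty$ in the definition $b_r(y) = \lim_{t\to\infty}\big(d(y,r(t)) - t\big)$ yields $b_r(y) \le d(y,a) - d(x,a)$, which rearranges to exactly the required inequality. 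Thus $[r(\infty),1]$ is a Busemann subgradient of $d(\cdot,a)$ at $x$, and combined with the case $x=a$ this shows $d(\cdot,a)$ is Busemann subdifferentiable on all of $X$.

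I do not expect any serious obstacle here: the content is a single application of the triangle inequality, plus the standard fact (implicit in the well-definedness of $b_r$) that $t \mapsto d(y,r(t)) - t$ is nonincreasing and bounded below. The only point requiring a little bookkeeping is the identification of $b_{x,\xi}$ with $b_r$, which rests on uniqueness of the ray issuing from $x$ in a prescribed direction, and — behind the passage between (\ref{eqn:busesubpre}) and (\ref{eqn:busesub}) — on Proposition \ref{additive}, so that changing the reference point alters $b_\xi$ only by an additive constant.
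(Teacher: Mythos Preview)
Your proof is correct and follows essentially the same route as the paper: both reduce to showing $b_r(y)\le d(y,a)-d(x,a)$, which you obtain by a direct triangle-inequality bound $d(y,r(t))-t\le d(y,a)-t_0$ for $t\ge t_0$, while the paper phrases the same step as evaluating the nonincreasing function $t\mapsto d(y,r(t))-t$ at $t=d(x,a)$.
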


\begin{proof}
Since the point $a$ minimizes the distance function, $[0]$ is a subgradient.  On the other hand, at any point $x \ne a$, we must prove that all points $y \in X$ satisfy the inequality 
$d(y,a) \ge d(x,a) + b_r(y)$, where 
\[
b_r(y) = \lim_{t \to \infty} \{ d\big(y,r(t)\big) - t \}
\]
is the Busemann function for the ray $r$.  The argument on the right-hand side is nonincreasing in $t$, by \cite[Lemma II.8.18(1)]{bridson}, and equals $d(y,a) - d(x,a)$ when $t=d(x,a)$, so the result follows.
\end{proof}

	We next prove a basic chain rule for compositions of Busemann subdifferentiable
		functions with scalar convex functions. 

		\begin{proposition}\textbf{(Chain rule)}
			\label{prop:chain}
			Suppose $f\from C\to \R$ is Busemann subdifferentiable, and $g\from \R\to \R$ is a nondecreasing
			convex function. Then $g\circ f$ is Busemann subdifferentiable, and if $f$
			has a Busemann subgradient $[\xi,s]$ at $x \in C$ then for any $\alpha \in \p g(f(x))$, $[\xi,\alpha s]$ 
			is a Busemann subgradient for $g\circ f$ at $x$.
		\end{proposition}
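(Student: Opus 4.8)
The plan is to reduce everything to the subgradient inequality form (\ref{eqn:busesub}). Fix a point $x \in C$ and a Busemann subgradient $[\xi,s]$ of $f$ at $x$, so that
\[
f(y) ~\ge~ f(x) + s\,b_{x,\xi}(y) \qquad \text{for all } y \in C,
\]
with the standing convention that $s\,b_{x,\xi} \equiv 0$ when $s=0$. Since $g$ is a finite convex function on all of $\R$, its subdifferential $\partial g(f(x))$ is nonempty, so we may choose $\alpha \in \partial g(f(x))$. The first small fact to record is that $\alpha \ge 0$: because $g$ is nondecreasing, applying the one-dimensional subgradient inequality $g(t) \ge g(f(x)) + \alpha\big(t - f(x)\big)$ at any $t < f(x)$ forces $\alpha \ge 0$.

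Next I would chain the two inequalities. Evaluating the subgradient inequality for $g$ at the point $f(x)$ with argument $t = f(y)$ gives
\[
g\big(f(y)\big) ~\ge~ g\big(f(x)\big) + \alpha\big(f(y) - f(x)\big) \qquad (y \in C).
\]
Since $\alpha \ge 0$, multiplying $f(y) - f(x) \ge s\,b_{x,\xi}(y)$ through by $\alpha$ and substituting yields
\[
g\big(f(y)\big) ~\ge~ g\big(f(x)\big) + \alpha s\,b_{x,\xi}(y) \qquad (y \in C),
\]
which is exactly the assertion, via (\ref{eqn:busesub}), that $[\xi,\alpha s]$ is a Busemann subgradient of $g \circ f$ at $x$. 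As $x \in C$ was arbitrary, $g \circ f$ is Busemann subdifferentiable. One could equivalently phrase this through the pairing, using positive homogeneity in the second argument, $\ip{\cdot,[\xi,\alpha s]} = \alpha\,\ip{\cdot,[\xi,s]}$ from Proposition \ref{prop:paircont}(iii), but the inequality form above is the most direct.

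The degenerate cases are automatic and need only a remark: if $s = 0$ then $[\xi,s] = [0]$, so $x$ minimizes $f$ on $C$, and monotonicity of $g$ makes $x$ a minimizer of $g\circ f$, consistent with $[\xi,\alpha s] = [0]$; similarly if $\alpha = 0$ the displayed inequality degenerates to $g(f(y)) \ge g(f(x))$, again recording that $[0]$ is a Busemann subgradient. I do not expect a genuine obstacle here; the only point that truly requires attention — and the only place the monotonicity hypothesis on $g$ is used — is the step $\alpha \ge 0$, since the Busemann term $s\,b_{x,\xi}(y)$ is not sign-definite and so multiplying the subgradient inequality for $f$ by $\alpha$ would fail to preserve its direction without it.
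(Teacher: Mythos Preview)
Your proof is correct and follows essentially the same approach as the paper: both note $\alpha \ge 0$ from monotonicity of $g$, then chain the subgradient inequality for $g$ at $f(x)$ with the Busemann subgradient inequality for $f$ at $x$. The only cosmetic difference is that the paper works in the pairing form $\ip{\cdot,[\xi,s]}$ and invokes positive homogeneity explicitly, while you work directly with the equivalent form $s\,b_{x,\xi}$ from (\ref{eqn:busesub}); you even note this equivalence yourself.
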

		\begin{proof}
			Fix $x\in C$ and let $[\xi,s]$ be a Busemann subgradient for $f$ at $x$.
			For any $\alpha \in \p g(f(x))$ we must show $y\mapsto (g\circ f)(y) - \ip{y,[\xi,\alpha s]}$ is minimized
			over $C$ by $x$. Note that $\alpha \geq 0$ because $g$ is nondecreasing.
			Since $[\xi,s]$ is a Busemann subgradient for $f$ at $x$ we have
			\begin{equation}
				\label{eqn:subgradfunction}
				f(y) - \ip{y,[\xi,s]} \geq f(x) - \ip{x,[\xi,s]} \text{ for all } y\in C.
			\end{equation}
			Likewise, since $\alpha \in \p g(f(x))$ we have 
			\begin{equation}
				\label{eqn:subgradscalar}
				g(f(y)) - \alpha f(y) \geq g(f(x)) - \alpha f(x) \text{ for all } y\in C.
			\end{equation}
			Finally we estimate
			\begin{align*}
				g(f(x)) - \ip{x,[\xi,\alpha s]} &= g(f(x)) - \alpha \ip{x,[\xi,s]}\\
				&\leq \alpha (f(x) - \ip{x,[\xi,s]} - f(y)) + g(f(y))\\
				&\leq g(f(y)) - \alpha \ip{y,[\xi,s]} \\
				&= g(f(y)) - \ip{y,[\xi,\alpha s]}.
			\end{align*}
			The first and last equalities use positive homogeneity of the pairing in the second slot (Proposition \ref{prop:paircont}(3)), while
			the second line and third lines use (\ref{eqn:subgradscalar}) and (\ref{eqn:subgradfunction}) respectively.
			The inequality (\ref{eqn:subgradfunction}) is preserved because the coefficient $\alpha$ is nonnegative. 
		\end{proof}

		\begin{example}\textbf{(Reparametrized distance functions)}
			\label{ex:distp}
			Fix $a\in X$, $\sigma \geq 0, b\in \R$, and $p\geq 1$. Taking $f(x) = d(x,a)$ and $g(s) = \sigma s^p + b$, 
			we combine the result of Proposition \ref{distanceprop} with the chain rule (Proposition \ref{prop:chain})
			to immediately yield the Busemann subdifferentiability 
			of $g\circ f$ on $X$. More explicitly, $[0]$ remains a Busemann
			subgradient at $a$ and at each $x\in X \setminus
			\st{a}$	we obtain a Busemann subgradient $[r(\infty),p\sigma d(x,a)^{p-1}]$. 
		\end{example}
		
		Before we proceed to further examples, let us show that Busemann subdifferentiable functions defined on
		geodesically convex sets are
		geodesically convex.
		\begin{proposition}[\bf Subdifferentiability implies convexity]
			\label{prop:buseimpgeo}
			Suppose $C\seq X$ is geodesically convex and $f\from C \to \R$ is Busemann subdifferentiable. Then $f$ is 
			geodesically convex on $C$.
		\end{proposition}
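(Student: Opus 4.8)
The plan is to transcribe the classical fact that a function possessing a subgradient at every point is convex, reading off geodesic convexity directly from the defining inequality of a Busemann subgradient together with the convexity of the pairing. Conceptually, each Busemann subgradient at a point $x$ produces a geodesically convex minorant of $f$ that agrees with $f$ at $x$, so $f$ is a pointwise supremum of geodesically convex functions; but it is cleaner to argue the required inequality along a single geodesic at a time.

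Concretely, fix two points $u,v\in C$ and let $\gamma\from[0,1]\to X$ be the geodesic joining them, rescaled so that $\gamma(0)=u$ and $\gamma(1)=v$; geodesic convexity of $C$ ensures $\gamma(t)\in C$ for every $t$. Fix $t\in(0,1)$ and set $x=\gamma(t)$. Since $f$ is Busemann subdifferentiable it has a Busemann subgradient $[\xi,s]\in CX^\infty$ at $x$, which by Definition \ref{def:epihoro} means
\[
f(y)-\ip{y,[\xi,s]}\ \ge\ f(x)-\ip{x,[\xi,s]}\qquad\text{for all }y\in C.
\]
Applying this at $y=u$ and at $y=v$, weighting the two inequalities by $1-t\ge 0$ and $t\ge 0$, and adding, I obtain
\[
(1-t)f(u)+tf(v)\ \ge\ f(x)+\Big[(1-t)\ip{u,[\xi,s]}+t\ip{v,[\xi,s]}-\ip{x,[\xi,s]}\Big].
\]

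To finish, I invoke Proposition \ref{prop:paircont}(ii): the map $y\mapsto\ip{y,[\xi,s]}$ is geodesically convex on $X$ (this subsumes the degenerate case $s=0$, where the map is identically zero). Evaluating this convexity along $\gamma$ at parameter $t$ gives $\ip{x,[\xi,s]}=\ip{\gamma(t),[\xi,s]}\le(1-t)\ip{u,[\xi,s]}+t\ip{v,[\xi,s]}$, so the bracketed term above is nonnegative. Hence $f(\gamma(t))=f(x)\le(1-t)f(u)+tf(v)$, and since $u,v$ and $t$ were arbitrary, $f$ is geodesically convex on $C$.

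I do not anticipate a real obstacle: the only points needing care are bookkeeping ones — handling the convention $s=0$, which is absorbed into Proposition \ref{prop:paircont}(ii), and keeping the parametrization of $\gamma$ consistent with the notion of geodesic convexity in use — neither of which causes genuine difficulty.
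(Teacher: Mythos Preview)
Your proof is correct and follows essentially the same route as the paper: take a Busemann subgradient at the intermediate point on a geodesic, apply the defining inequality at both endpoints with the appropriate convex weights, and use the geodesic convexity of the pairing (Proposition~\ref{prop:paircont}(ii)) to discard the residual term. The only differences are cosmetic relabelings.
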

		\begin{proof}
			Take any two points $x,y\in C$ and let $\lambda \in [0,1]$. 
			Let $\gamma \from [0,1] \to X$ parametrize the geodesic segment $[x,y]$, denote the point 
			$\gamma(\lambda)$ by $z_\lambda$, and note $z_\lambda \in C$ by geodesic convexity of $C$.
			Use the Busemann subdifferentiability of $f$ to procure a Busemann subgradient $[\xi,s]$
			at $z_\lambda$.
			Then two applications of Definition \ref{def:epihoro} yield:
			\[f(x) - \ip{x,[\xi,s]} \geq f(z_\lambda) - \ip{z_\lambda,[\xi,s]}\]
			\[\,~f(y) - \ip{y,[\xi,s]} \geq f(z_\lambda) - \ip{z_\lambda,[\xi,s]}.\]
			Multiply the first inequality by $\lambda$ and the second by $1-\lambda$, and sum the
			resulting inequalities to obtain
			\begin{equation}
			\label{eqn:convexity}
			\lambda f(x) + (1-\lambda)f(y) \geq f(z_\lambda) + \lambda \ip{x,[\xi,s]} + (1-\lambda)\ip{y,[\xi,s]} - \ip{z_\lambda,[\xi,s]}.
		\end{equation}
		Convexity of the pairing in the first argument (Proposition \ref{prop:paircont}(2)) implies
		\[\lambda \ip{x,[\xi,s]} + (1-\lambda)\ip{y,[\xi,s]}  \geq \ip{z_\lambda,[\xi,s]}.\]
			Then it follows from (\ref{eqn:convexity}) that 
			\[\lambda f(x) + (1-\lambda)f(y) \geq f(z_\lambda),\]
			so $f$ is geodesically convex on $C$ since $x,y\in C$ were arbitrary.
		\end{proof}
		
The following example shows that Busemann subdifferentiability essentially coincides with convexity in the Euclidean case.
		
		\begin{example}[\bf Euclidean Busemann subdifferentiability]
			\label{ex:euclid}
			Let $C\seq \R^n$ be convex and suppose $f\from C \to \R$ is convex. Classical convex-analytic arguments show that if $f$ is locally
			Lipschitz on $C$ then $f$ has a subgradient at each point in $C$, so $f$ is Busemann subdifferentiable. In light of Proposition \ref{prop:buseimpgeo}, we conclude that convexity and Busemann subdifferentiability coincide for locally Lipschitz functions on convex sets in Euclidean space.					\end{example}

More generally, however, the assumption of Busemann subdifferentiability, fundamental for the algorithms we develop, is more restrictive than geodesic convexity, as the following examples show.

		\begin{example}[\bf Subdifferentiability in the tripod]
			\label{ex:geonotbuse2}
			Consider the tripod $X$ and let $C \seq X$ be the union of any two of the three defining rays (including the origin).
			Then $C$ is geodesically convex so the distance function $f(x) = \dist(x,C)$ is geodesically convex, but we claim that $f$ 
			has no Busemann subgradient at any point outside of $C$.  To see this, suppose $x_0$ lies in the open ray disjoint from $C$. Assume $f$ has a Busemann subgradient $[\xi,s]$
			at $x_0$, with $s > 0$ because $x_0$ does not minimize $f$. Then we have the inequality
			\begin{equation}
				\label{eqn:tripodsub}
				f(y) - sb_\xi(y) \geq f(x_0) - sb_\xi(x_0) \text{ for all } y\in X.
			\end{equation}
		At least one of the rays defining $C$ must be distinct from the 
			ray emanating from the origin with direction $\xi$, so take $y$ to be a point in such a ray $r$. Then $f(y) = 0$,
			while $-sb_\xi(y)$ blows up to $-\infty$ as $y$ travels along $r$ away from the origin (see Example~\ref{ex:tripod}).
			For $y$ sufficiently far along $r$ this leads to a violation of (\ref{eqn:tripodsub}).
		\end{example}

The same difficulty can occur in Hadamard manifolds, as the next example shows.

		\begin{example}[\bf Subdifferentiability in the Poincar\'e disk]
			\label{ex:geonotbuse}
			We consider $X = \mathbb{H}^2$, 
			The subset $F = \st{(x,y) \in \mathbb{H}^2 \mid y = 0}$ is closed and geodesically convex 
			(it is a geodesic line in $\mathbb{H}^2$), so the function
			$f(p) = \dist(p,F)$ is geodesically convex (\cite[II.2 Corollary 2.5]{bridson}). 
			However, $f$ is not Busemann subdifferentiable on any neighborhood of $F$. To see this, let $U$ be a neighborhood of $F$, consider any $(x_0,y_0) \in U$ with $y_0\neq 0$, 
			and suppose there exists a 
			Busemann subgradient $[\xi,s]$ at $(x_0,y_0)$, with $s > 0$ because $(x_0,y_0)$ does not minimize $f$. 
			We can identify $\xi \in X^\infty$ with a point 
			$\xi = (\xi_1,\xi_2) \in \mathbb{S}^1$, and we will use this to write the Busemann
			function $b_\xi$ explicitly. By definition $f - \ip{\cdot,[\xi,s]}$ is minimized 
			at $(x_0,y_0)$, i.e.\
			\[\dist((t,0),F) - \ip{(t,0),[\xi,s]} \geq \dist((x_0,y_0),F) - \ip{(x_0,y_0),[\xi,s]} \text{ for all } t\in (-1,1).\]
			Since $(t,0) \in F$ for all $t\in (-1,1)$ this is equivalent to
			\begin{equation}
				\label{eqn:poincare2}
				-sb_{\xi}(t,0) \geq \dist((x_0,y_0), F) - \ip{(x_0,y_0),[\xi,s]} \text{ for all } t\in (-1,1).
		\end{equation}
			Choosing our reference point to be $(\bar x,\bar y) = (0,0)$, \cite[II.8.24(2)]{bridson} 
			tells us the Busemann function $b_\xi$ has the following
			form:
			\begin{equation}
				\label{eqn:busepoin}
				b_\xi( x,y ) = - \log \left(\frac{ 1 - x^2 - y^2 }{(x-\xi_1)^2 + (y-\xi_2)^2}\right).
		\end{equation}
			Substituting into (\ref{eqn:poincare2}) gives
			\[s \log\left(\frac{1-t^2}{(t-\xi_1)^2 + \xi_2^2}\right) \geq \dist ( ( x_0,y_0),F) - \ip{(x_0,y_0),[\xi,s]}.\]
			If $\xi_2 \neq 0$ let $t\to 1$, and if $\xi_2 = 0$ let $t\to -\xi_1$. In either case
			the lefthand side approaches $-\infty$ while the righthand side remains constant, a contradiction.
		\end{example}
		
Notwithstanding these examples, we next argue that the class of Busemann subdifferentiable functions is quite rich.

\subsection*{Busemann envelopes}
Given any function $g\from CX^\infty \to (-\infty,+\infty]$, we can define a {\em conjugate} function 
$g^\circ \colon X \to [-\infty,+\infty]$ by
\begin{equation} \label{preconjugate}
g^\circ(x) ~=~ \sup\st{\ip{x,[\xi,s]} - g([\xi,s]) \from [\xi,s] \in CX^\infty} \qquad (x \in X).
\end{equation}
Conjugacy gives a fundamental way to construct nontrivial Busemann subdifferentiable functions on a general Hadamard space.

\begin{definition}
A function $f \from X \to \R$ is a {\em Busemann envelope} if there exists a function 
$g\from CX^\infty \to (-\infty,+\infty]$ such that $f = g^\circ$ with the supremum (\ref{preconjugate}) attained for all points $x \in X$.
\end{definition}
 
\begin{proposition}  \label{ex:busemax}
Busemann envelopes are Busemann subdifferentiable.
\end{proposition}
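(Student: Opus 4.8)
The plan is to unwind the definition of a Busemann envelope and check the Busemann subgradient inequality directly at an arbitrary point. Suppose $f = g^\circ$ with the supremum in \eqref{preconjugate} attained everywhere, and fix $x \in X$. By hypothesis there exists some $[\xi,s] \in CX^\infty$ achieving the supremum defining $f(x)$, that is,
\[
f(x) ~=~ \ip{x,[\xi,s]} - g([\xi,s]).
\]
I claim this same $[\xi,s]$ is a Busemann subgradient of $f$ at $x$. Indeed, for any $y \in X$, the definition of the supremum gives
\[
f(y) ~=~ g^\circ(y) ~\ge~ \ip{y,[\xi,s]} - g([\xi,s]).
\]
Subtracting the displayed equality for $f(x)$ from this inequality yields
\[
f(y) - f(x) ~\ge~ \ip{y,[\xi,s]} - \ip{x,[\xi,s]},
\]
which rearranges to $f(y) - \ip{y,[\xi,s]} \ge f(x) - \ip{x,[\xi,s]}$ for all $y \in X$; that is exactly the statement that $x$ minimizes $y \mapsto f(y) - \ip{y,[\xi,s]}$, so $[\xi,s]$ is a Busemann subgradient of $f$ at $x$ in the sense of Definition \ref{def:epihoro}. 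Since $x$ was arbitrary, $f$ is Busemann subdifferentiable.

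There is essentially no obstacle here: the result is the Hadamard-space analogue of the classical fact that a function equal to its biconjugate is subdifferentiable wherever the conjugation supremum is attained, and attainment is built into the definition of Busemann envelope precisely to make this argument go through. The only points worth a sentence of care are that $g$ is allowed to take the value $+\infty$, so one should note that the maximizing $[\xi,s]$ necessarily has $g([\xi,s]) < +\infty$ (otherwise $f(x) = -\infty$, contradicting $f \from X \to \R$), which keeps all the arithmetic with $g([\xi,s])$ finite and legitimate; and that the pairing $\ip{\cdot,\cdot}$ is finite-valued (Proposition \ref{prop:paircont}), so no $\infty - \infty$ issues arise. The monotonicity/convexity machinery of the pairing is not even needed for subdifferentiability itself — only for the corollary, via Proposition \ref{prop:buseimpgeo}, that $f$ is then automatically geodesically convex.
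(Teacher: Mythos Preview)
Your proof is correct and is essentially identical to the paper's own argument: both pick a maximizer $[\xi,s]$ of the defining supremum at $x$ and observe that the inequality $g^\circ(y) - \ip{y,[\xi,s]} \ge -g([\xi,s]) = g^\circ(x) - \ip{x,[\xi,s]}$ is exactly the Busemann subgradient condition. Your additional remarks on finiteness of $g([\xi,s])$ and of the pairing are sound but not needed for the bare argument.
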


\begin{proof}
Any $[\xi,s]$ attaining the supremum in (\ref{preconjugate}) for a given point $x\in X$ gives
\[
g^\circ(y) -\ip{y,[\xi,s]} \geq - g([\xi,s]) = g^\circ(x) - \ip{x,[\xi,s]}  \text{ for all } y\in X.
\]
This says $[\xi,s]$ is a Busemann subgradient at $x$ according to Definition \ref{def:epihoro}.
\end{proof}

			\begin{example}\textbf{(Euclidean convex functions)}
				\label{ex:convcont}
On Euclidean space, all finite convex functions are Busemann envelopes.  The proof involves routine classical convex analysis.  While reassuring, our development does not depend on this observation, so we defer the argument to the appendix.
			\end{example}

			\begin{example}\textbf{(Busemann functions)}
	\label{ex:buseex}
	The most basic class of Busemann envelopes are Busemann functions themselves.
	Suppose $r$ is a ray on $X$.  Then the corresponding Busemann function $b_r$ has Busemann
	subgradient $[r(\infty),1]$ at each point $x\in X$.  Furthermore, for the function 
	$g\from CX^\infty \to (-\infty,+\infty]$ taking the value $0$ at $[r(\infty),1]$ and $+\infty$ elsewhere, we have $b_r = g^\circ$, and we easily check that $b_r$ is therefore a Busemann envelope.
	\end{example}

A much larger class of Busemann envelopes is furnished by the following example.

	\begin{example}
				\textbf{(Continuous-compact-representable functions)}
				\label{ex:sublinear}
				Endowing the boundary $X^\infty$ with the cone topology opens up the following natural construction.
				Suppose $K \seq CX^\infty$ is compact and $g\from CX^\infty \to (-\infty,+\infty]$ is
				continuous on $K$ and
				$+\infty$ outside of $K$. Because the pairing is also continuous (Proposition \ref{prop:paircont}(1)), the function $g^\circ$ is a Busemann envelope,  and hence Busemann subdifferentiable by Proposition \ref{ex:busemax}.  This construction generates a rich class of examples.  In particular, in Euclidean space we recover the class of ``almost sublinear'' functions:  those functions $f$ such that $\sup_{x\in \R^n}|f(x)-h(x)|< \infty$ for some sublinear function $h$ \cite[Proposition 4.5]{borweinalmost}. This class covers all convex optimization problems with compact feasible regions, by extending the objectives to have this growth property outside the feasible region.  More generally, consider the case when the space $X$ is proper, and hence the boundary $X^\infty$ is compact.  Then compact subsets of the boundary cone $CX^\infty$ are abundant, because if $F\seq X^\infty$ is closed and $I \seq \R_+$ is compact, then the image of $F\times I$ under the quotient map is compact.
					\end{example}

We next observe that we we can view distance functions as Busemann envelopes.
					
						\begin{example}\textbf{(Distance functions)}
			\label{ex:dist}
			Consider the function $d(\cdot,a)$ for any fixed $a\in X$.
			For each $x\neq a$ let $r_{x,a}\from \R_+ \to X$
			be any ray issuing from $x$ that eventually passes through $a$ (at least one exists by the 
			geodesic extension property). Furthermore, define $\xi_x = r_{x,a}(\infty)$, 
			and let $K = \st{[\xi_x,1] \mid x \in X \setminus \st{a}}$.
			Define $g\from CX^\infty \to (-\infty,+\infty]$ by
			\[
				g([\xi,s]) = \begin{cases}
					\ip{a,[\xi,s]}, & [\xi,s] \in K \\
					+\infty, & [\xi,s] \notin K.
			\end{cases} \]
Then it is straightforward to check $d(\cdot,a) = g^\circ$, and furthermore that $d(\cdot,a)$ is consequently a Busemann envelope.  Since the details are routine, and we have already proved Proposition \ref{distanceprop}, we defer them to the appendix.			
							\end{example}
			
					\begin{remark}[\bf Finite max functions]
			\label{rem:max}
			If $f_1,\dots, f_m$ are Busemann subdifferentiable functions 
			on a common set $C$, then $\max\st{f_1,\dots,f_m}$ is Busemann subdifferentiable on $C$: 
			a Busemann subgradient at $x\in C$ is obtained by
			choosing a Busemann subgradient for any function attaining the maximum.
		\end{remark}

		\begin{example}\textbf{(Distance to balls and horoballs)}
			\label{ex:distballs}
			Generalizing Proposition \ref{distanceprop}, we can show that the distance functions
			to balls and horoballs are Busemann subdifferentiable.
		Fix any $a\in X, \rho \geq 0$ and consider the function $f(x) = \dist(x,B_\rho(a))$
		where $B_\rho(a)$ is the closed ball of
		radius $\rho$ around $a$. Then $f(x) = \max\st{0,d(x,a)-\rho}$ and the Busemann subdifferentiability of $f$ follows from Example
		\ref{ex:distp} and Remark~\ref{rem:max}.
Turning to the case of horoballs, consider a ray $r$ in $X$ and the corresponding horoball $H_r = b_r^{-1}( (-\infty,0])$. We then have
\[
\dist(x,H_r) = \max\st{0,b_r(x)}, 
\]
proving Busemann subdifferentiability by Example 
		\ref{ex:buseex}
		and Remark \ref{rem:max}.  For details, see the appendix.
			\end{example}

\subsection*{Fenchel conjugacy}
We next detour to compare our approach and the conjugate construction (\ref{preconjugate}) with related usages of Busemann functions for convex analysis in Hadamard spaces. 
		Given a geodesically convex function $f\from X\to \R$, the recent work
		\cite{convanalyshad} defines the \textit{asymptotic Legendre-Fenchel conjugate} $f^*\from CX^\infty \to (-\infty,+\infty]$ 
		by
		\[f^{*}([\xi,s]) = \sup_{x\in X}\st{-\ip{x,[\xi,s]} - f(x)}.\]
		Note that this definition depends implicitly on the choice of reference point $\bar x$, but only up
		to an additive term (Proposition \ref{additive}). That is, if we use subscripts to denote the choice of reference point
		then
		\[f^*_{\bar {x}}([\xi,s]) = f^*_{\hat x}([\xi,s]) -\ip{\hat{x},[\xi,s]} \text{ for all } [\xi,s]\in CX^\infty.\]
		The definition of $f^*$ yields a version of the classical Fenchel-Young inequality:
		\[f^*([\xi,s]) + f(x) \geq -\ip{x,[\xi,s]}  \text{ for all } [\xi,s] \in CX^\infty, x\in X.\]
		If $f \from X\to \R$ is Busemann subdifferentiable, any Busemann subgradient $[\xi,s]$ at $x\in X$ gives the 
		following inequality by definition:
		\[f(y) - \ip{y,[\xi,s]} \geq f(x) - \ip{x,[\xi,s]} \text{ for all } y\in X.\]
		Rearranging and taking the supremum over $y\in X$ leads to a different type of conjugate $f^\bullet \from CX^\infty \to (-\infty,+\infty]$:
		\begin{equation}
			\label{eqn:prefench}
			\ip{x,[\xi,s]} - f(x) \geq \sup_{y\in X}\st{\ip{y,[\xi,s]} - f(y)} =: f^\bullet([\xi,s]).
	\end{equation}
By definition, we have the Fenchel-Young-type inequality
		\begin{equation}
			\label{eqn:prefench2}
			f^\bullet([\xi,s]) + f(x) \geq \ip{x,[\xi,s]} \text{ for all } [\xi,s]\in CX^\infty, x\in X,
	\end{equation}
which holds with equality if and only if $[\xi,s]$ is a Busemann subgradient for $f$ at $x$, analogously to the classical case. The only difference between the 
		definitions of $f^*$ and $f^\bullet$ is the sign of the pairing term. 
		For comparison, \cite[Definition 1]{fenchelhadam} concerns a notion of Fenchel conjugate on Hadamard manifolds with the sign on the pairing term matching inequality \eqref{eqn:prefench}.
		
Taking one step further along the classical path, we might consider the biconjugate $(f^\bullet)^\circ$.  In linear spaces, linking convexity to the identity $f = f^{**}$ is the famous result of Fenchel-Moreau.
A similar result holds in Hadamard spaces, using Busemann subdifferentiability in place of convexity.   While interesting formally, our development does not rely on this result, so we defer its proof to the appendix.
	
\begin{proposition}[\bf Fenchel biconjugation]
\label{biconjugate}
Consider a function $f\from X\to \R$. If $f$ is Busemann subdifferentiable then 
$f= (f^\bullet)^\circ$.  If $X$ is proper and $f$ is continuous, the converse also holds.
\end{proposition}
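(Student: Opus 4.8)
The plan is to prove the two implications separately, using the Fenchel–Young-type inequality (\ref{eqn:prefench2}) as the central tool.

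For the forward direction, suppose $f\from X\to\R$ is Busemann subdifferentiable. The inequality (\ref{eqn:prefench2}) rearranges to $\ip{x,[\xi,s]} - f^\bullet([\xi,s]) \le f(x)$ for every $[\xi,s]\in CX^\infty$ and every $x\in X$; taking the supremum over $[\xi,s]$ gives $(f^\bullet)^\circ(x) \le f(x)$ for all $x$. For the reverse inequality, fix $x\in X$ and use Busemann subdifferentiability to procure a Busemann subgradient $[\xi,s]$ of $f$ at $x$. By the equality case of (\ref{eqn:prefench2}) noted just after that display, we have $f^\bullet([\xi,s]) + f(x) = \ip{x,[\xi,s]}$, hence $\ip{x,[\xi,s]} - f^\bullet([\xi,s]) = f(x)$, so this particular $[\xi,s]$ attains the value $f(x)$ in the supremum defining $(f^\bullet)^\circ(x)$. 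Therefore $(f^\bullet)^\circ(x) \ge f(x)$, and combining the two inequalities yields $f = (f^\bullet)^\circ$. (This argument is essentially the one already used in Proposition \ref{ex:busemax}, now run in reverse with $g = f^\bullet$; note it also shows $f$ is a Busemann envelope.)

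For the converse, assume $X$ is proper and $f\from X\to\R$ is continuous with $f = (f^\bullet)^\circ$; we must show $f$ has a Busemann subgradient at each $x\in X$. By the forward argument, it suffices to exhibit $[\xi,s]\in CX^\infty$ attaining the supremum in $(f^\bullet)^\circ(x) = \sup\st{\ip{x,[\xi,s]} - f^\bullet([\xi,s]) \from [\xi,s]\in CX^\infty}$, since any such maximizer is a Busemann subgradient of $f = (f^\bullet)^\circ$ at $x$ by the argument in Proposition \ref{ex:busemax}. Existence of a maximizer is where properness enters: when $X$ is proper, $X^\infty$ is compact, so $CX^\infty$ is the continuous image of $X^\infty \times \R_+$ under the quotient map — almost compact, but unbounded in the $\R_+$ direction — and we will need a coercivity estimate to confine the search to a compact set. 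The key observation is that $f^\bullet([\xi,s]) \ge \ip{y,[\xi,s]} - f(y)$ for every $y$; choosing $y$ to be the reference point $\bar x$ (so $b_\xi(\bar x) = 0$ and $\ip{\bar x,[\xi,s]} = 0$) gives $f^\bullet([\xi,s]) \ge -f(\bar x)$, a finite lower bound independent of $[\xi,s]$. Meanwhile $\ip{x,[\xi,s]} = s\, b_\xi(x) \le s\, d(x,\bar x)$, so the objective $\ip{x,[\xi,s]} - f^\bullet([\xi,s]) \le s\, d(x,\bar x) + f(\bar x)$; but we also need the objective to go to $-\infty$ as $s\to\infty$, which requires a lower bound on $f^\bullet([\xi,s])$ that grows in $s$. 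This comes from choosing $y$ far out along the ray $r_{\bar x,\xi}$: then $b_\xi(y) = -d(\bar x,y)$ (Busemann functions decrease at unit rate along their own ray), so $f^\bullet([\xi,s]) \ge -s\, d(\bar x, y) - f(y)$, which is the wrong sign — so instead one uses that along the opposite direction, or more carefully balances the estimate, to get growth. Let me restate: since $f$ is real-valued and continuous, the conjugate relation $f(x) = (f^\bullet)^\circ(x) < \infty$ already forces the supremum to be finite; combined with the continuity of the pairing (Proposition \ref{prop:paircont}(i)) and lower semicontinuity of $f^\bullet$ as a supremum of continuous functions, the objective $[\xi,s]\mapsto \ip{x,[\xi,s]} - f^\bullet([\xi,s])$ is upper semicontinuous on $CX^\infty$, so it attains its supremum on any compact set containing a superlevel set. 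The finiteness of $f$ at points marching out to infinity along rays, together with (\ref{eqn:prefench}), pins $s$ into a bounded range, and then compactness of $X^\infty$ (properness) closes the argument.

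I expect the main obstacle to be the converse direction — specifically, establishing the coercivity estimate that lets one restrict the supremum defining $(f^\bullet)^\circ(x)$ to a \emph{compact} subset of $CX^\infty$, so that upper semicontinuity forces attainment. The forward direction is a routine rearrangement of the Fenchel–Young inequality and the equality-case remark. The converse requires genuinely exploiting properness (via compactness of $X^\infty$) and the finiteness/continuity of $f$ to rule out the supremum escaping to $s = \infty$; one must show that for large $s$ the term $f^\bullet([\xi,s])$ dominates $\ip{x,[\xi,s]}$, which I would extract from the definition of $f^\bullet$ by testing against points $y = r_{x,\xi}(t)$ along the ray from $x$ in direction $\xi$ and using that $b_{x,\xi}(r_{x,\xi}(t)) = -t$ while $f(r_{x,\xi}(t))$ is finite — more precisely, combining this with a sublinear-growth bound on $f$ along rays that itself follows from geodesic convexity (Proposition \ref{prop:buseimpgeo} applied to $f = (f^\bullet)^\circ$, since Busemann envelopes are convex) and finiteness. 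Packaging this coercivity cleanly is the crux; everything else is bookkeeping with the pairing's continuity and homogeneity.
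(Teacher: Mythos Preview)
Your forward direction is correct and matches the paper's proof exactly: the inequality $(f^\bullet)^\circ \le f$ is immediate from the Fenchel--Young inequality \eqref{eqn:prefench2}, and Busemann subdifferentiability gives the reverse via the equality case.

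For the converse you have correctly identified the architecture --- upper semicontinuity of $\phi_x([\xi,s]) = \ip{x,[\xi,s]} - f^\bullet([\xi,s])$, compactness of $X^\infty$ when $X$ is proper, and the need for a coercivity bound confining $s$ --- but you have not actually produced that coercivity estimate, and your tentative attempts do not work. Testing at $y = r_{x,\xi}(t)$ gives $b_{x,\xi}(y) = -t$, the wrong sign, as you noticed. Your fallback appeal to Proposition~\ref{prop:buseimpgeo} for convexity of $f$ is circular, since that proposition assumes the very Busemann subdifferentiability you are trying to establish; and even granting convexity of $(f^\bullet)^\circ$ directly (as a supremum of convex functions), convexity yields \emph{lower} bounds along rays, not the upper bound on $f$ you need to bound $f^\bullet$ from below.

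The paper fills the gap as follows. Take a maximizing sequence $[\xi_n,s_n]$ and extract $\xi_n\to\xi$ by compactness of $X^\infty$. If $s_n\to\infty$ along a subsequence, then for any $R>0$,
\[
\frac{f^\bullet([\xi_n,s_n])}{s_n} \;\ge\; \sup_{y\in B_R(\bar x)}\Bigl\{ b_{\xi_n}(y) - \frac{f(y)}{s_n}\Bigr\} \;\ge\; R - \frac{1}{s_n}\sup_{B_R(\bar x)} f.
\]
The point is that $\sup_{y\in B_R(\bar x)} b_{\xi_n}(y) = R$, attained at the backward extension $\tilde r_n(-R)$ of the ray $r_{\bar x,\xi_n}$ --- this is precisely your ``opposite direction'' intuition, and crucially it holds \emph{uniformly in $n$} because it depends only on $R$, not on $\xi_n$. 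The supremum of $f$ over $B_R(\bar x)$ is finite because $f$ is continuous and the closed ball is compact (properness of $X$). Sending $n\to\infty$ gives $\liminf_n f^\bullet([\xi_n,s_n])/s_n \ge R$ for every $R$, hence this ratio diverges. Then
\[
\phi_x([\xi_n,s_n]) = s_n\Bigl(b_{\xi_n}(x) - \frac{f^\bullet([\xi_n,s_n])}{s_n}\Bigr) \le s_n\Bigl(d(x,\bar x) - \frac{f^\bullet([\xi_n,s_n])}{s_n}\Bigr) \to -\infty,
\]
contradicting $\phi_x([\xi_n,s_n]) \to f(x) \in \R$. So $\{s_n\}$ is bounded; a further subsequence with $s_n\to s$ then yields a maximizer $[\xi,s]$ by upper semicontinuity, and this maximizer is the desired Busemann subgradient. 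The uniformity of the estimate over $\xi_n$ is what makes the argument go through, and is what your ray-by-ray attempts were missing.
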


\section{The geometry of Busemann subdifferentiability}
Classical convex analysis profits from the relationship between the convexity of functions, the convexity of their level sets and epigraphs, and their Lipschitz continuity.  In this section, we explore some parallels in the Hadamard space setting, in which Busemann subdifferentiability of functions is related to the notion of horospherical convexity of sets, a geometric property defined in terms
			of Busemann functions.
		\begin{definition}
			\label{def:horoconv}
		For a closed set $F \seq X$ we say $F$ is \textit{horospherically convex} if for each 
		point $x$ in the boundary of $F$, there
		exists a \textit{supporting ray at} $x$, which is to say a ray $r$ issuing from $x$ such that 
		\[F\seq \st{z\in X\mid b_r(z)\leq 0}.\]
		Then we call the righthand side a \textit{supporting horoball} for $F$ at $x$, and say 
		the ray $r$ \textit{supports} $F$ at $x$.  
	\end{definition}

\noindent	
In Euclidean space, horoballs are halfspaces:  each supporting horoball for a convex set corresponds to a supporting hyperplane, and thus to a ``normal'' vector, a local geometric idea based on angle.  This relationship breaks down in nonlinear spaces: we make no use of normality, relying instead on supporting horoballs and rays.  
	
	The Busemann functions appearing in both Definitions \ref{def:epihoro} and \ref{def:horoconv}
	hint at the following relationship between Busemann subdifferentiability and horospherically convex level sets, mimicking the classical case.
		\begin{proposition}[\bf Subdifferentiability and level sets]
			\label{prop:levhoro}
			Consider a Busemann subdifferentiable function $f\from X\to \R$.  If either $f$ is continuous or the space $X$ is proper, then for any value $M > \inf_{X} f$, the level set
			$\st{x\in X \mid f(x) \leq M}$ is horospherically convex.
					\end{proposition}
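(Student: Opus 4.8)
The plan is to verify the defining property of horospherical convexity directly: at every point $x_0$ in the boundary of the level set $L := \{x\in X : f(x)\le M\}$, I will exhibit a ray issuing from $x_0$ whose horoball contains $L$. Two preliminary facts hold under either hypothesis. First, $L$ is geodesically convex, since $f$ is geodesically convex by Proposition~\ref{prop:buseimpgeo}. Second, $L$ is closed, since Busemann subdifferentiability forces lower semicontinuity of $f$, as observed just after the subgradient inequality~\eqref{eqn:busesub}; in particular $\bd L = L\setminus \inter L$. I would also record a third consequence of these two properties, needed in the proper case: the restriction of $f$ to any geodesic segment is both convex (hence continuous on the interior and upper semicontinuous at the endpoints) and lower semicontinuous, hence continuous on the whole segment, so $f$ is continuous along geodesics.

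The heart of the argument is the following claim: if $p\in X$ satisfies $f(p)=M$, then $L$ has a supporting ray at $p$. To see this, take a Busemann subgradient $[\xi,s]$ of $f$ at $p$. Since $M>\inf_X f$, the point $p$ does not minimize $f$, so $[\xi,s]\neq[0]$ and therefore $s>0$. The subgradient inequality~\eqref{eqn:busesub} reads $f(y)\ge M + s\,b_{p,\xi}(y)$ for all $y$, so every $y\in L$ satisfies $s\,b_{p,\xi}(y)\le f(y)-M\le 0$, and hence $b_{p,\xi}(y)\le 0$. Thus the ray from $p$ with direction $\xi$ supports $L$ at $p$.

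If $f$ is continuous, then $\{f<M\}$ is open and contained in $L$, hence contained in $\inter L$, so $\bd L\seq\{f=M\}$, and the claim above applies directly at each $x_0\in\bd L$. If instead $X$ is proper, a boundary point $x_0$ may have $f(x_0)<M$, so I would approximate. Since $x_0\in\bd L$ and $L$ is closed, pick $y_n\to x_0$ with $f(y_n)>M$; along the geodesic from $x_0$ to $y_n$, continuity of $f$ together with $f(x_0)\le M<f(y_n)$ produces, by the intermediate value theorem, a point $p_n$ on that geodesic with $f(p_n)=M$, and $d(x_0,p_n)\le d(x_0,y_n)\to 0$, so $p_n\to x_0$. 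The claim supplies a direction $\xi_n\in X^\infty$ with $b_{\xi_n}(z)\le b_{\xi_n}(p_n)$ for all $z\in L$ (writing the supporting inequality relative to the fixed basepoint via Proposition~\ref{additive}). Because $X$ is proper, $X^\infty$ is compact, so after passing to a subsequence $\xi_n\to\xi$ in the cone topology. Continuity of the pairing (Proposition~\ref{prop:paircont}(i), used with constant speed $1$) then gives $b_{\xi_n}(z)\to b_\xi(z)$ for each fixed $z$ and $b_{\xi_n}(p_n)\to b_\xi(x_0)$; passing to the limit yields $L\seq\{z : b_\xi(z)\le b_\xi(x_0)\}=\{z : b_{x_0,\xi}(z)\le 0\}$, so the ray from $x_0$ with direction $\xi$ supports $L$ at $x_0$.

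I expect the main obstacle to be the proper case, and within it the limit passage: converting $\xi_n\to\xi$ in $X^\infty$ together with $p_n\to x_0$ into the convergence $b_{\xi_n}(p_n)\to b_\xi(x_0)$ that carries the supporting inequality to the limit. This is exactly where properness (making $X^\infty$ compact, so a convergent subsequence of directions exists) and continuity of the pairing enter, and it explains why the continuous-but-non-proper case must instead be handled by forcing $f(x_0)=M$ and using the subgradient at $x_0$ itself. A secondary point needing a short argument is the continuity of $f$ along geodesics used to locate the approximating points $p_n$, but this follows immediately from the convexity of Proposition~\ref{prop:buseimpgeo} and the lower semicontinuity already noted.
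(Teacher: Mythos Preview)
Your proof is correct and follows essentially the same line as the paper's: the continuous case is identical, and in the proper case you both approximate the boundary point, take Busemann subgradients at nearby points, and pass to the limit using compactness of $X^\infty$ together with joint continuity of $(x,\xi)\mapsto b_{x,\xi}(y)$. The only minor difference is that the paper takes the subgradients directly at the external points $x_k$ with $f(x_k)>M$ (which already yields $b_{x_k,\xi_k}(y)\le 0$ for $y\in L$), bypassing your intermediate-value detour to points $p_n$ with $f(p_n)=M$.
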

		\begin{proof}
Denote the level set by $F$.  We consider a point $\bar x$ is in the boundary of $F$. 
			
Suppose first that the function $f$ is continuous.  In that case we have $f(\bar x) = M$. Since $f$ is Busemann subdifferentiable there exists a Busemann subgradient $[\xi,s]$ at $\bar x$, so let $r$ be the ray issuing from $\bar x$ with
			 direction $\xi$.
			We will show that $r$ supports $F$ at $\bar x$. Since $M > \inf_X f$ we have
			$\bar x\notin \argmin_X f$, implying $s > 0$. Then (\ref{eqn:busesub}) implies 
			\[
				s b_{\bar x,\xi}(z) + f(\bar x)  \leq f(z)  \text{ for all } z\in X.
		\]
		If $z\in F$ notice $f(\bar x) = M \geq f(z)$ so in fact $s b_{\bar x,\xi}(z) \leq 0$. Dividing by $s > 0$ 
		implies $b_{\bar x,\xi}(z)\leq 0$. The equality $b_{\bar x,\xi} = b_r$ 
		is only a matter of notation,
		so in this way we obtain a supporting horoball at every boundary point of $F$, rendering $F$ horospherically convex.
	
Now suppose instead that the space $X$ is proper. Some sequence
		$x_k\to \bar x$ satisfies $f(x_k) > M$. At each $x_k$, choose a Busemann subgradient $[\xi_k,s_k]$. Then for all $y\in F$,
		\[M + s_kb_{x_k,\xi_k} < f(x_k) + s_k b_{x_k,\xi_k}(y) \leq f(y) \leq M.\]
		Note $s_k > 0$ because $x_k$ does not minimize $f$, hence $b_{x_k,\xi_k}(y)\leq 0$ for all $y\in F$. Properness of $X$ implies that
		$X^\infty$ is compact, allowing us to extract a convergent subsequence $\xi_k \to \xi$. The map $(x,\xi)\mapsto
		b_{x,\xi}(y)$ is continuous \cite[II.1]{Ballmann1995Lectures} so passing to the limit implies
		$b_{x,\xi}(y) \leq 0$ for $y\in F$. Thus $F$ admits a supporting horoball at each boundary point
		as desired.
\end{proof}

On proper spaces $X$, as we shall see, Busemann subdifferentiable functions $f$ are in fact always continuous.  Nonetheless, the use of continuity above is illuminating for what follows.  It is also instructive now to return to Examples \ref{ex:geonotbuse2} and \ref{ex:geonotbuse}.  These examples concern continuous functions on proper Hadamard spaces, but one may verify that their level sets are not horospherically convex.  Proposition \ref{prop:levhoro} therefore implies that the functions are not Busemann subdifferentiable, as we verified directly.

	Continuing the classical analogy, the next result shows Busemann subdifferentiability of a function $f$
	is closely related both to horospherical
	convexity of its \textit{epigraph}, 
	\[
		\epi f := \st{(x,\alpha) \in X\times \R \mid f(x) \leq \alpha}, 
	\]
and to continuity.
While interesting, we will not rely on the following result, and since its proof resembles that of Proposition \ref{prop:levhoro}, we defer it to the appendix.

	\begin{proposition}[\bf Subdifferentiability, epigraphs and continuity]
		\label{prop:horoepi} \mbox{} \\
		For a function \mbox{$f\from X\to \R$}, consider the following three conditions:
\begin{enumerate}[label=(\roman*)]
\item
$f$ is Busemann subdifferentiable
\item
the epigraph of $f$ is horospherically convex
\item
$f$ is Busemann subdifferentiable and continuous.

\end{enumerate}
The implications (iii) $\Rightarrow$ (ii) $\Rightarrow$ (i) hold, and if the space $X$ is proper, 
then all three properties are equivalent.
\end{proposition}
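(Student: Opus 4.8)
The plan is to pass to the product Hadamard space $Y = X \times \R$ with the $\ell^2$ product metric, which is again a complete CAT(0) space, and to translate between Busemann subgradients of $f$ at a point $x_0 \in X$ and supporting rays of the epigraph $\epi f \subseteq Y$ at the point $(x_0,f(x_0))$. The computational heart is the shape of Busemann functions in $Y$. A standard description of geodesics in $\ell^2$ products shows that every ray of $Y$ issuing from $(x_0,f(x_0))$ has the form
\[
r(t) ~=~ \big(\sigma(at),\ f(x_0) + \delta c t\big), \qquad \delta \in \{+1,-1\},\ a,c \ge 0,\ a^2 + c^2 = 1,
\]
where $\sigma$ is a unit-speed ray of $X$ issuing from $x_0$ (or the constant path at $x_0$ when $a=0$); and a one-line asymptotic estimate based on $d(z,\sigma(\tau)) = \tau + b_{x_0,\sigma(\infty)}(z) + o(1)$ and a square-root expansion yields
\[
b_r(z,\alpha) ~=~ a\, b_{x_0,\sigma(\infty)}(z) - \delta c\, (\alpha - f(x_0)) \qquad \text{for all } (z,\alpha) \in Y.
\]
Everything else is bookkeeping with this identity.

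For (iii) $\Rightarrow$ (ii): if $f$ is continuous then $\epi f$ is closed and its boundary is exactly the graph $\{(x,f(x)) : x \in X\}$. At such a boundary point pick a Busemann subgradient $[\xi,s]$ of $f$ at $x_0$. If $s=0$ then $x_0$ minimizes $f$ and the vertical ray $r(t) = (x_0, f(x_0)+t)$ works; if $s>0$, let $\sigma$ be the ray from $x_0$ in direction $\xi$ and put $a = s/\sqrt{1+s^2}$, $c = 1/\sqrt{1+s^2}$, $\delta=+1$. In both cases the identity above, combined with the subgradient inequality \eqref{eqn:busesub} (note $a/c = s$) and the definition of $\epi f$, gives $b_r \le 0$ on $\epi f$, so $r$ supports $\epi f$ at $(x_0,f(x_0))$; since $x_0$ was arbitrary, $\epi f$ is horospherically convex.

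For (ii) $\Rightarrow$ (i): fix $x_0 \in X$. As $(x_0, f(x_0)-\varepsilon) \notin \epi f$ for $\varepsilon>0$, the point $(x_0,f(x_0))$ lies on the boundary of $\epi f$, so it has a supporting ray $r$, which we decompose as above. Since $\epi f$ contains $(z,\alpha)$ for every $z\in X$ and every $\alpha \ge f(z)$, letting $\alpha \to +\infty$ in $b_r(z,\alpha) \le 0$ forces $\delta c \ge 0$, hence $\delta = +1$ when $c>0$. If $c>0$, evaluating $b_r(z,\alpha)\le 0$ at $(z,f(z))$ rearranges to $f(z) \ge f(x_0) + (a/c)\,b_{x_0,\sigma(\infty)}(z)$ for all $z$, so $[\sigma(\infty), a/c]$ is a Busemann subgradient of $f$ at $x_0$ (if in addition $a=0$, this reads $f\ge f(x_0)$, and $[0]$ works). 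It remains to rule out $c=0$, i.e. a horizontal supporting ray: this would force $b_{x_0,\sigma(\infty)} \le 0$ on all of $X$, which is impossible. Here is the one genuinely geometric point: given any ray $\sigma$ from $x_0$, the geodesic extension property supplies a ray $\tau$ issuing from $\sigma(1)$ and passing through $x_0 = \tau(1)$; because a geodesic subtends angle $\pi$ at an interior point, and in CAT(0) an angle of $\pi$ forces collinearity, one gets $d(\tau(2),\sigma(T)) = T+1$ for all $T>0$, whence $b_\sigma(\tau(2)) = 1 > 0$. So in every surviving case $f$ has a Busemann subgradient at $x_0$.

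Finally, when $X$ is proper the cycle is completed by (i) $\Rightarrow$ (iii), that is, by the fact --- stated separately in the paper, which we simply invoke --- that every Busemann subdifferentiable function on a proper Hadamard space is continuous. Its proof mirrors the second half of Proposition \ref{prop:levhoro}: such an $f$ is lower semicontinuous by \eqref{eqn:busesub}, and if it failed to be upper semicontinuous at some $x_0$ one could choose $y_n \to x_0$ with $f(y_n) \to \ell > f(x_0)$; testing the subgradient inequality at $y_n$ against $x_0$ forces the speeds $s_n$ of Busemann subgradients at $y_n$ to diverge, after which compactness of $X^\infty$ (properness) and continuity of $(x,\xi)\mapsto b_{x,\xi}$ produce a limiting direction $\xi$ with $b_{x_0,\xi}\le 0$ everywhere, contradicting the opposite-ray observation just made. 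I expect the exclusion of the $c=0$ case (hence the opposite-ray fact) and this compactness argument to be the only steps needing real care; everything else follows mechanically from the displayed Busemann-function identity.
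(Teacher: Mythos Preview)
Your arguments for (iii) $\Rightarrow$ (ii) and (ii) $\Rightarrow$ (i) match the paper's: both decompose rays in $X \times \R$, compute the product Busemann function as $a\,b_{x_0,\sigma(\infty)}(z) - \delta c(\alpha - f(x_0))$ (the paper cites the spherical-join description of $(X\times\R)^\infty$ from \cite{bridson}, you derive it by hand), and rule out the horizontal case via geodesic extension. The genuine difference is in closing the loop under properness. First, a small correction: the implication ``(i) $\Rightarrow$ continuous'' is not stated separately in the paper --- it is part of this very proposition --- but since you go on to sketch a proof, this is harmless. Your sketch is correct and more direct than the paper's route: you show that a failure of upper semicontinuity at $x_0$ forces Busemann subgradients at nearby points with diverging speeds, after which compactness of $X^\infty$ and joint continuity of $(x,\xi)\mapsto b_{x,\xi}$ yield a direction with $b_{x_0,\xi} \le 0$ on all of $X$, contradicting the opposite-ray fact. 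The paper instead first proves (i) $\Rightarrow$ (ii) via Fenchel biconjugation (Proposition~\ref{biconjugate}), writing $f$ as a supremum of scaled Busemann functions so that $\epi f$ is an intersection of horoballs, and then cites an external result \cite{criscitiello2025} that such intersections are horospherically convex in proper spaces; continuity is then extracted from (ii) by a separate level-set argument. Your approach is self-contained and avoids the external citation; the paper's approach has the advantage of establishing (i) $\Rightarrow$ (ii) independently, without passing through continuity.
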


Just as in Euclidean space, continuous convex functions on the space $X$ must be locally Lipschitz, by \cite[Lemma 4.3]{sumrule}.  The following result relates Busemann subgradients to the local Lipschitz constant:  for an analogous Euclidean result, see \cite[Theorem 3.61]{beckfirstorder}.
		
		\begin{proposition}[\bf Busemann subdifferentiability and Lipschitz functions]
			\label{prop:liphoro}
			Consider a function $f\from C\seq X \to \R$ that is Busemann subdifferentiable. 
			\begin{enumerate}[label=(\roman*)]
				\item If at each point in $C$ the function $f$ admits a Busemann subgradient $[\xi,s]$ with 
					$s \leq L$, then $f$ is $L$-Lipschitz
					on $C$.

				\item If $f$ is $L$-Lipschitz on $C$ and $C$ is open, then every Busemann subgradient $[\xi,s]$
					for $f$ at $x$ in $C$ satisfies 
					$s \leq L$.
			\end{enumerate}
					\end{proposition}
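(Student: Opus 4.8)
The plan is to treat the two parts separately. Part (i) follows directly from the subgradient inequality \eqref{eqn:busesub} together with the fact that Busemann functions are $1$-Lipschitz; part (ii) requires exhibiting, near $x$, points along which the Busemann function $b_{x,\xi}$ grows at rate essentially $1$, which we arrange using the geodesic extension property.

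For (i), fix $x,y\in C$ and choose, as hypothesized, a Busemann subgradient $[\xi,s]$ of $f$ at $x$ with $s\le L$. The subgradient inequality \eqref{eqn:busesub} gives $f(y)\ge f(x)+s\,b_{x,\xi}(y)$. Since $b_{x,\xi}$ is $1$-Lipschitz with $b_{x,\xi}(x)=0$, we have $b_{x,\xi}(y)\ge -d(x,y)$, and because $s\ge 0$ this yields $f(x)-f(y)\le s\,d(x,y)\le L\,d(x,y)$ (the case $s=0$ is trivial, as then $x$ minimizes $f$). Exchanging the roles of $x$ and $y$, using a Busemann subgradient at $y$ of speed at most $L$, gives $|f(x)-f(y)|\le L\,d(x,y)$.

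For (ii), let $[\xi,s]$ be a Busemann subgradient of the $L$-Lipschitz function $f$ at $x\in C$; we may assume $s>0$, the case $s=0$ being immediate. Let $r$ be the ray issuing from $x$ with direction $\xi$, so that $b_{x,\xi}=b_r$ and $b_r(r(t))=-t$. Fix $T>0$, put $p=r(T)\ne x$, and use the geodesic extension property to pick a ray $\sigma$ issuing from $p$ and passing through $x$; by uniqueness of geodesics $\sigma$ restricted to $[p,x]$ is the reverse of $r|_{[0,T]}$, so $\sigma(T)=x$. Since $C$ is open, for small $u>0$ the point $y_u:=\sigma(T+u)$ lies in $C$, with $d(x,y_u)=u$. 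The crucial claim is that $b_{x,\xi}(y_u)=u=d(x,y_u)$. To prove it, observe that for any $t\ge 0$ the concatenation of $\sigma|_{[T,T+u]}$ reversed (a geodesic from $y_u$ to $x$) with $r|_{[0,t]}$ (a geodesic from $x$ to $r(t)$) is a local geodesic: away from $x$ each piece is already a geodesic, and in a neighborhood of $x$ the concatenation coincides with a sub-arc of $\sigma$, since $r|_{[0,\epsilon]}$ lies on the side of $x$ towards $p$ along $\sigma$ while $y_u$ lies on the opposite side. In a CAT(0) space every local geodesic is a geodesic, so $x\in[y_u,r(t)]$ and hence $d(y_u,r(t))=u+t$; letting $t\to\infty$ gives $b_{x,\xi}(y_u)=u$. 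Then \eqref{eqn:busesub} gives $f(y_u)\ge f(x)+su$, while $L$-Lipschitz continuity gives $f(y_u)-f(x)\le Lu$, so $su\le Lu$ and therefore $s\le L$.

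The main obstacle I anticipate is establishing the claim $b_{x,\xi}(y_u)=d(x,y_u)$ — equivalently, that $x$ lies on every geodesic from $y_u$ to $r(t)$. This rests on the CAT(0) principle that local geodesics are global, together with a careful use of the geodesic extension property to produce a ray through $x$ that ``straightens out'' the ray $r$ at $x$. Everything else — the limit computation defining $b_{x,\xi}$ and part (i) — is routine bookkeeping.
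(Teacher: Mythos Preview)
Your proof is correct and takes essentially the same approach as the paper. For part (ii), the paper streamlines your argument by directly extending the ray $r$ to a geodesic line $\tilde r\colon\R\to X$ via \cite[Lemma II.5.8(2)]{bridson} and setting $y_\eps=\tilde r(-\eps)$, so that $d(y_\eps,r(t))=t+\eps$ and hence $b_r(y_\eps)=\eps$ are immediate; your concatenation and local-to-global geodesic argument is a correct rederivation of this line-extension fact from first principles.
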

		\begin{proof}
			(i) For $x,y\in C$ choose the assumed Busemann subgradients $[\xi_x,s_x],[\xi_y,s_y]$ at $x,y$ respectively.
				Applying Definition \ref{def:epihoro} twice gives:
				\begin{equation}
					\label{eqn:lip}
			 \begin{aligned}
				 f(x) - \ip{x,[\xi_x,s_x]} &\leq f(y) - \ip{y,[\xi_x,s_x]},\\
				 f(y) - \ip{y,[\xi_y,s_y]} &\leq f(x) - \ip{x,[\xi_y,s_y]}.
		\end{aligned}
		\end{equation}
		Rearranging the first inequality in (\ref{eqn:lip}) implies
		\[f(x) - f(y) \leq \ip{y,[\xi_x,s_x]} - \ip{x,[\xi_x,s_x]} \leq s_xd(x,y) \leq Ld(x,y),\]
		where the second inequality comes from Proposition \ref{prop:paircont}(2). Arguing similarly for the second inequality
		in (\ref{eqn:lip}), we find $f$ is $L$-Lipschitz on $C$.

			(ii) Suppose $f$ is $L$-Lipschitz on $C$ and take any Busemann subgradient $[\xi,s]$ at $x\in C$.
			If $s = 0$ there is nothing to prove, so we assume $s > 0$.
			Let $r$ be the ray issuing from $x$ with direction $\xi$.
			Since $X$ has the geodesic extension property
			$r$ can be extended to a 
			geodesic line $\tilde{r} \from \R \to X$ by \cite[Lemma II.5.8(2)]{bridson}.
			The geodesic line $\tilde{r}$ is continuous with $\tilde{r}(0) = x\in C$, and $C$ is open so
			there exists $\eps > 0$ such that $\tilde{r}(-\eps) \in C$.
			By (\ref{eqn:busesub}) we have
			\[s b_{x,\xi}(y) + f(x)  \leq f(y) \text{ for all } y\in X.\]
			Plugging in $y_\eps = \tilde{r}(-\eps)$ and using the definition of $b_{x,\xi} = b_r$ we find
			$b_{x,\xi}(y_\eps) = \eps$ from which we derive:
			\[s\eps = s b_{x,\xi}(y_\eps) \leq f(y_\eps) - f(x) \leq 
			Ld(y_\eps,x) =  L\eps.\]
			Divide through by $\eps > 0$ to conclude $s \leq L$.
		\end{proof}

			\begin{remark}
				\label{rem:distnotbuse}
				Proposition \ref{distanceprop} showed that $x\mapsto \dist(x,B)$ 
				is Busemann subdifferentiable when $B$ is a singleton, and Example \ref{ex:distballs}
				generalized this conclusion to sets $B$ that are balls or horoballs.
				Given the relationship between Busemann subdifferentiability and horospherical convexity discussed in Proposition \ref{prop:levhoro},
				one might ask if the distance function to a horospherically
				convex set is Busemann subdifferentiable. However, this result fails without more structure:  even in Euclidean space, horospherically convex sets may not even be convex (an example being a set of two distinct points).  				
				 \end{remark}

				 The next example shows that the behaviour illustrated in Remark \ref{rem:distnotbuse}
				 persists even for distances to sets that are both horospherically and geodesically convex.

				 \begin{example}\textbf{(Failure of subdifferentiability for distance functions)}
				\label{ex:distnotbuse2}
				A simple but interesting example of a Hadamard space $(X,d)$ can be obtained by
				gluing five Euclidean quadrants along 
		their edges to form a cycle, which can be realized concretely as the union of the following quadrants in $\R^3$
		endowed with the intrinsic metric induced by Euclidean distance:
		\[\R_+\times \R_+ \times\st{0}, ~
			\R_+\times\R_-\times\st{0}, ~ \R_-\times\R_+\times\st{0}, ~ \st{0}\times \R_-\times\R_+, ~ \R_-\times\st{0}\times\R_+.
	\]
	This space arises as a subspace of the tree space $\mathcal{T}_4$ (see Section \ref{sec:comp}), and is illustrated in 
	\cite[Figure 12]{billeratree} as well as Figure \ref{fig:levset1}. We will revisit this space when discussing the Busemann subdifferentiability
	of a sum of Busemann subdifferentiable functions in Example \ref{ex:nonhoro}.

	Define $C \seq X$ to be the geodesic segment joining the points $(1,0,0)$ and $(0,1,0)$. It is not hard to see that $C$ is 
	horospherically convex by considering rays parallel to $r(t) = (t/\sqrt{2})(1,1,0)$, and $C$ is obviously geodesically convex. 
	But one can show that if $f(x) = \dist(x,C)$
	then $f$ is not Busemann subdifferentiable at $x_0 = (1,1,0)$. The argument is structurally similar to 
	those of Examples \ref{ex:geonotbuse2}, \ref{ex:geonotbuse}, but requires checking a few cases so
	we relegate this casework to the appendix. 
			 \end{example}
			 
\subsection*{Subgradient-like algorithms}
The first subgradient-like algorithm for geodesically convex optimization in general Hadamard spaces appeared in \cite{lewis2024horoballs}.  That ``horospherical subgradient method'' relies only on the geometry of the objective level sets, specifically their horospherical convexity:  steps follow supporting rays to level sets.  That geometric framework is restrictive.  In particular, sums of such ``horospherically quasiconvex'' functions may not be horospherically quasiconvex, even in Euclidean space.

To circumvent this obstacle, we focus on structured objectives, specifically those that decompose as sums, aiming to handle each summand separately, in a stochastic or splitting fashion.  However, any such splitting method must rely on more than simply the level set geometry of the summands:  for example, $p$-mean problems involve summands whose level-set geometry is  independent of the exponent \mbox{$p \ge 1$,} even though the solutions vary with $p$.  In contrast with the supporting ray oracle used in the horospherical subgradient method, Busemann subgradients are comprised of both a ray and a ``speed'' at which that ray is traversed.  This extra information is crucial for our approach.

While Busemann subgradients are more informative than supporting rays to level sets, we must nonetheless take a splitting approach to structured optimization.  The reason is simple, but striking:  unlike classical convex analysis, {\em addition does not preserve Busemann subdifferentiability}.  We outline a simple counterexample below, with the details in the appendix.
	
\begin{example}\textbf{(Non-Busemann subdifferentiability of a sum)}
	\label{ex:nonhoro}
		Recall the quadrant space $X$ of Example \ref{ex:distnotbuse2}.
	Let $a_1 = (0,-1,0), a_2 = (-1,0,0)$ be points in $X$ and define $f\from X\to \R$ by
	\[f(x) = \frac{1}{2}d(x,a_1)^2 + \frac{1}{2} d(x,a_2)^2. \]
	By Example \ref{ex:distp}, $f$ is a sum of Busemann subdifferentiable functions.
	Taking $\bar x = (1/4, 1/4,0)$, we claim that the level set $f_{\bar x} = \st{z \in X \mid f(z) \leq f(\bar x)}$ is not horospherically convex. In the appendix, we show that there is no supporting horoball for $f_{\bar x}$ at $\bar x$. 		
	This example is illustrated in Figure \ref{fig:levset1}, where the black point is $\bar x$,
	the violet set corresponds to the horoball at $\bar x$ generated by the ray moving towards the
	\textit{spine} (the half-line $x = 0, y = 0, z\geq 0$), 
	and the red set is the level set $f_{\bar x}$. Only the parts of the horoball and level set contained in the 
	plane $z = 0$ are shown. Since $f$ is continuous, the proof of Proposition \ref{prop:levhoro} implies that it cannot be Busemann subdifferentiable:  otherwise, this level set would be horospherically 
		convex. In particular, $f$ is not Busemann subdifferentiable at $\bar x$.
		\begin{figure}
						\centering
			\includegraphics[scale=0.55]{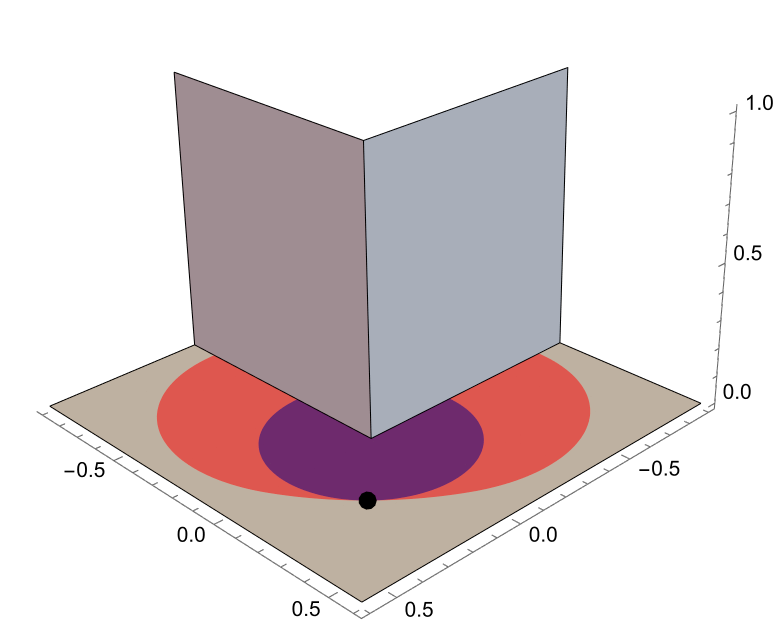}
			\caption{Part of a level set that fails to be horospherically convex.}
			\label{fig:levset1}
		\end{figure}
	\end{example}

\section{Subgradients in Alexandrov space}
Despite its generality, our development in this work is limited to the nonpositively-curved setting of Hadamard spaces.  Loosening this significant restriction to allow curvature that is bounded above but possibly positive gives rise to a more general class:  Alexandrov spaces. Convexity of distance functions then only holds on domains of small diameter.  (Euclidean spheres, for example, require restricting to small spherical caps.)
	Nonetheless, on such domains, $p$-mean problems (for example) can be solved
	via Riemannian gradient descent, on manifolds \cite{Afsari2012}, and more generally
	via proximal methods \cite{OhtaPalfia2015}. However, beyond Hadamard spaces, Busemann subgradients seem unhelpful:  geodesic rays do not exist in spheres, for example.  Whether some other notion of subgradient might support optimization algorithms on Alexandrov spaces with a satisfactory complexity analysis remains an open question.  As background, we here compare 
	Busemann subgradients with a recent notion of subgradient for convex functions on Alexandrov spaces 
\cite{sumrule}. 
For simplicity, we restrict attention to our usual setting of a Hadamard space $X$.  We briefly review some necessary tangent space constructions:  for details, see \cite{bridson}.  For any point $x \in X$, we denote by $\Theta_x X$  the set of all nonconstant geodesics issuing at~$x$.

		Given three points $x, y, z \in X$, a \textit{geodesic triangle} $\Delta = \Delta(x, y, z)$
	is the union of three geodesic segments (its sides) joining each pair of points. A
	\textit{comparison triangle} for $\Delta$ is a triangle  $\Delta(\bar x, \bar y,  \bar z)$ in $\R^2$
	that has side lengths equal
	to those of $\Delta$. 
	Take two geodesics $\gamma,\eta \in \Theta_x$, let $\gamma_t = \gamma(t), \eta_s = \eta(s)$ for $t,s\geq 0$ and 
	let $\Delta(\bar \gamma_t,\bar x,\bar \eta_s)$ be a comparison triangle in $\R^2$ for $\Delta(\gamma_t,x,\eta_s)$. Then the
	angle $\angle \bar \gamma_t\bar x\bar \eta_s$ is a nondecreasing function of both $t$ and $s$ and the
	\textit{Alexandrov angle} between $\gamma$ and $\eta$ is well-defined as the following limit: 
	\[\angle(\gamma,\eta) := \lim_{t,s\downarrow 0} \angle \bar \gamma_t \bar x\bar \eta_s.\]
		The Alexandrov angle induces a metric on the set $\Sigma_x X$ of equivalence classes of geodesics in $\Theta_x X$, where two geodesics $\gamma, \eta\in \Theta_x X$ are considered equivalent if $\angle(\gamma,\eta) = 0$. 
For a geodesic $\gamma \in \Theta_x X$, we denote by $[\gamma]$ its equivalence class. 
 The {\it tangent space} $T_x X$ of $X$ at $x$ is the Euclidean cone over the metric space $(\Sigma_x X, \angle)$ (see \cite[Chapter I, Definition 5.6]{bridson}). 
 For $v = ([\gamma],r), w = ([\eta],s) \in T_x X$ the {\it scalar product} of $v$ and $w$ is given by $\langle \langle v,w \rangle 
 \rangle = rs\cos \angle(\gamma, \eta)$. For a geodesically convex function $f\from X\to \R$, the \textit{subdifferential} of $f$ at $x$, denoted by
 $\p f(x)$, can be characterized (see \cite[Proposition 4.4]{sumrule})
 as the elements $([\eta],s)\in T_xX$ satisfying the \textit{subgradient inequality}
 \begin{equation}
	 \label{eqn:cat0subgrad}
 \ip{\langle ([\eta],s),([\gamma_y],d(x,y))\rangle} + f(x) \leq f(y) \text{ for all } y\in X,
 \end{equation}
 where $\gamma_y\from[0,d(x,y)]\to X$ denotes the geodesic from $x$ to $y$. Any element $v\in \p f(x)$ is called a \textit{subgradient}
 of $f$ at $x$.  A related idea used in the study of gradient flows in Alexandrov spaces is the \textit{minus-subdifferential} \cite[Definition 3.6]{GigliNobili2021}. 
 
 Busemann subgradients are a special case of the Alexandrov-space subgradient described above.  Specifically, we have the following result.   
 
		\begin{theorem}
			\label{thm:subgrad}
Given a Hadamard space $X$ and $C \seq X$, suppose that the function $f\from C\to \R$ is geodesically convex, with a Busemann subgradient $[\xi,s]$ at $x\in C$. 
			Let $r$ be a ray issuing from $x$ with direction $\xi$.
			Extend $r$ to a geodesic line
			$\tilde{r}\from \R\to X$ and define a new ray $r_-\from \R_+\to X$ by $r_-(t) = \tilde{r}(-t)$.
			Then $([r_-],s) \in \p f(x)$.	
		\end{theorem}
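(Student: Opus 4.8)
The plan is to verify directly the subgradient inequality (\ref{eqn:cat0subgrad}), which by \cite[Proposition 4.4]{sumrule} characterizes membership in $\p f(x)$ for the geodesically convex function $f$:  namely, to show that
\[
\ip{\langle ([r_-],s),([\gamma_y],d(x,y))\rangle} + f(x) ~\le~ f(y) \qquad (y \in C).
\]
Two cases are immediate.  If $y = x$, both sides equal $f(x)$.  If $s = 0$, then $[\xi,0] = [0]$ is a Busemann subgradient of $f$ at $x$, so $x$ minimizes $f$, while $([r_-],0)$ is the apex of the tangent cone $T_xX$, against which the scalar product vanishes; the claim reduces to $f(x) \le f(y)$.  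So assume henceforth $s > 0$ and $y \ne x$.  Since $\ip{\langle ([r_-],s),([\gamma_y],d(x,y))\rangle} = s\,d(x,y)\cos\angle(r_-,\gamma_y)$ by definition of the scalar product, and since the Busemann subgradient inequality (\ref{eqn:busesub}) gives $f(y) \ge f(x) + s\,b_{x,\xi}(y)$ with $s > 0$, it suffices to prove the purely metric inequality
\[
d(x,y)\cos\angle(r_-,\gamma_y) ~\le~ b_{x,\xi}(y).
\]

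I would establish this from two observations.  \emph{First}, because $\tilde r$ is a geodesic line through $x$, every comparison triangle for $\Delta(x, r(t), r_-(t'))$ has side lengths $t$, $t'$, $t+t'$ and is therefore degenerate, with angle $\pi$ at the vertex corresponding to $x$; taking the limit as $t,t'\downarrow 0$, the Alexandrov angle $\angle(r,r_-)$ equals $\pi$.  The triangle inequality for Alexandrov angles \cite[I.1.14]{bridson} then yields $\angle(r_-,\gamma_y) \ge \pi - \angle(r,\gamma_y)$, and since cosine is decreasing on $[0,\pi]$,
\[
\cos\angle(r_-,\gamma_y) ~\le~ -\cos\angle(r,\gamma_y).
\]
\emph{Second}, fix $t > 0$ and let $\alpha_t \in [0,\pi]$ be the comparison angle at the vertex corresponding to $x$ in a comparison triangle for $\Delta(x, y, r(t))$, whose side lengths are $d(x,y)$, $t$, and $d(y,r(t))$.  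The Euclidean law of cosines gives
\[
\cos\alpha_t ~=~ \frac{d(x,y)^2 + t^2 - d(y,r(t))^2}{2\,t\,d(x,y)},
\]
and substituting the expansion $d(y,r(t)) = t + b_{x,\xi}(y) + o(1)$, which is merely the definition of $b_{x,\xi} = b_r$, shows that $\cos\alpha_t \to -b_{x,\xi}(y)/d(x,y)$ as $t \to \infty$.  In a CAT(0) space, comparison angles dominate the Alexandrov angle (which is their infimum), so $\cos\angle(r,\gamma_y) \ge \cos\alpha_t$ for every $t > 0$; letting $t \to \infty$ gives $-d(x,y)\cos\angle(r,\gamma_y) \le b_{x,\xi}(y)$.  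Combining the two observations,
\[
d(x,y)\cos\angle(r_-,\gamma_y) ~\le~ -d(x,y)\cos\angle(r,\gamma_y) ~\le~ b_{x,\xi}(y),
\]
which completes the argument.

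I expect the second observation to be the delicate step:  it requires tracking the expansion of $d(y,r(t))^2$ to order $o(t)$, checking that the remaining terms of the law-of-cosines quotient tend to zero, and correctly invoking the domination of Alexandrov angles by comparison angles in CAT(0) spaces.  The first observation is a short, standard manipulation of Alexandrov angles.  A minor preliminary point is the existence of the two-sided geodesic extension $\tilde r$ of $r$, which follows from the geodesic extension property exactly as in the proof of Proposition \ref{prop:liphoro} via \cite[Lemma II.5.8(2)]{bridson}.
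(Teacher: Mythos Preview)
Your proposal is correct and follows essentially the same route as the paper: reduce via the Busemann subgradient inequality to the metric bound $d(x,y)\cos\angle(r_-,\gamma_y)\le b_{x,\xi}(y)$, then combine the angle triangle inequality (using $\angle(r,r_-)=\pi$) with a law-of-cosines/comparison-angle argument whose limit as $t\to\infty$ produces the Busemann function.  The only cosmetic difference is that the paper writes the CAT(0) law of cosines as the inequality $d(r(t),y)^2\ge t^2+d(x,y)^2-2td(x,y)\cos\angle(r,\gamma_y)$ and computes $\lim_{t\to\infty}\frac{d(r(t),y)^2-t^2}{2t}=b_r(y)$ directly, whereas you phrase the same step as $\cos\angle(r,\gamma_y)\ge\cos\alpha_t\to -b_{x,\xi}(y)/d(x,y)$; these are the same computation.
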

		\begin{proof}
			By definition of the scalar product on $T_xX$, the subgradient inequality we wish to prove is
			\[s d(x,y)\cos\angle(r_-,\gamma_y) + f(x)\leq f(y) \text{ for all } y\in X.\]
			Busemann subdifferentiability of $f$ implies, by (\ref{eqn:busesub}), 
			\[s b_r(y) + f(x) \leq f(y) \text{ for all } y\in X.\]
			Thus it is enough to prove (the case $s = 0$ being trivial):
			\begin{equation}
				\label{eqn:desired}
				b_r(y) \geq d(x,y)\cos\angle(r_-,\gamma_y) \text{ for all } y\in X.
			\end{equation}
			We proceed in two steps, first proving 
			$\cos\angle(r_-,\gamma_y) \leq -\cos\angle(r,\gamma_y)$ $(*)$.
			Using the triangle inequality for angles we find:
			\[\pi = \angle(r,r_-) \leq \angle(r_-,\gamma_y) + \angle(r,\gamma_y).\]
			Hence $\pi - \angle(r,\gamma_y)\leq \angle(r_-,\gamma_y)$, and taking the cosine reverses the inequality: 
\begin{equation} \label{star}
\cos\angle(r_-,\gamma_y) \leq \cos(\pi - \angle(r,\gamma_y)) = -\cos\angle(r,\gamma_y).
\end{equation}

			We next prove 
\begin{equation} \label{starstar}
b_r(y) \geq -d(x,y)\cos\angle(r,\gamma_y).
\end{equation}
			By the law of cosines we have:
			\[d(r(t),y)^2 \geq d(x,r(t))^2 + d(x,y)^2 - 2d(x,r(t))d(x,y)\cos\angle(r,\gamma_y).\]
			Rearrangement leads to
			\begin{equation}
				\label{eqn:ineqprelim}
				\frac{d(r(t),y)^2 - t^2}{2t} \geq \frac{d(x,y)^2}{2t} - d(x,y)\cos\angle(r,\gamma_y).
		\end{equation}
		The lefthand side of \eqref{eqn:ineqprelim} simplifies in the limit because $t\mapsto d(r(t),y)-t$ is bounded:
			\begin{align}
				\label{eqn:quadraticbuse}
			\lim_{t\to \infty}\frac{d(r(t),y)^2 - t^2}{2t} &= 
			\lim_{t\to \infty}(d(r(t),y)-t)\lim_{t\to \infty}\frac{d(r(t),y) +t}{2t} \nonumber\\
			&= b_r(y)\lim_{t\to \infty}\left(\frac{d(r(t),y)-t}{2t} + 1 \right)  \nonumber \\
			&= b_r(y).
	\end{align}
	Using \eqref{eqn:quadraticbuse} and sending $t\to \infty$ in (\ref{eqn:ineqprelim}) yields 
	(\ref{starstar}).
			Combining (\ref{star}) and (\ref{starstar}) gives
			\[d(x,y)\cos\angle(r_-,\gamma_y) \leq -d(x,y)\cos\angle(r,\gamma_y) \leq b_r(y).\]
			This proves (\ref{eqn:desired}).
		\end{proof}
		
\noindent
In fact, the proof above shows that Busemann subgradients induce elements of the minus-subdifferential.
		
In general, the idea of a Busemann subgradient is more restrictive than the Alexandrov space notion.  Indeed, on Alexandrov spaces, continuous geodesically convex functions  have subgradients everywhere
\cite[Theorem 4.11]{sumrule}, but in the special case of Hadamard spaces, we have seen (Example \ref{ex:geonotbuse}) that such functions may not be Busemann subdifferentiable.  

As a concrete example, consider Example \ref{ex:nonhoro}.  The function $f$ has no Busemann subgradient at the point $\bar x$, but it does have a subgradient there in the Alexandrov space sense.
	Consider the geodesic $\eta \from [0,1]\to X$ given by
	$\eta(t) = \bar x + (t/\sqrt{2})(1,1,0)$ in the quadrant $\R_+\times \R_+ \times \st{0} \seq X$.
	We show in the appendix that $([\eta],3/\sqrt{2}) \in T_{\bar x}X$ is a subgradient for $f$ at $\bar x$. Roughly speaking, the fact that Busemann subgradients correspond to subgradients but not conversely is due to a local/global divide:  subgradients are characterized by local geometry, whereas the Busemann subgradient property is a global notion.
			\end{section}

	\begin{section}{Subgradient-based minimization of sums}
		\label{sec:algs}
We now return to our original optimization problem in Hadamard space.  We seek stochastic and incremental subgradient-type methods for minimizing a function of the form
\[
f(x) ~=~ \sum_{i=1}^m f_i(x) \qquad (x \in C).
\]	
We collect our assumptions below. 

We denote the projection
			onto a nonempty closed geodesically convex set $C$ by $P_C$ 
			(well-defined by \cite[Proposition II.2.4]{bridson}).	
			Define an oracle \texttt{Busemann}
			that accepts a Busemann subdifferentiable function $g\from C\to \R$ and a point $x\in C$, and 
			returns a Busemann subgradient
			for $g$ at $x$:
			$[\xi,s] \leftarrow \texttt{Busemann}(g,x)$. If $[\xi,s]$ is a Busemann subgradient for $g$
			at $x$, we will use the notation $r_{x,\xi}$ to denote the ray 
			issuing from $x$ with direction $\xi$.
			\begin{customthm}{A}
				\label{assumptionA}~\\
				\vspace{-0.5cm}
			\begin{enumerate}[label=(\roman*)]
			\item $(X,d)$ is a Hadamard space with the geodesic extension property.
			\item $C\seq X$ is nonempty, closed, and geodesically convex.

			\item The function $f$ decomposes as $f = \sum_{i=1}^m f_i$, where each 
				$f_i, i = 1,\dots,m$ is Busemann subdifferentiable on an open set 
			containing $C$.
						\item The optimal set of $\inf_{x\in C} f(x)$ is nonempty, and denoted by $X^*$. The optimal value of the
				problem is denoted by $f_{\text{opt}}$.
			\item There is a constant $L \geq 0$ such that
				for all $i=1,\dots,m$, every Busemann subgradient $[\xi,s]$ for $f_i$ at every point in $C$
				has $s\leq L$.
\end{enumerate}
	\end{customthm}
		
We begin with a stochastic subgradient method.

	\begin{algorithm}[h!]
		\caption{Stochastic Busemann Subgradient Method}
			\begin{algorithmic}
				\Require $x^0 \in C, \st{t_k}_{k=0}^\infty \subset (0,+\infty), \st{i(k)}_{k=0}^\infty$ independent Uniform$\st{1,
				\dots,m}$ 
				\For{$k=0,1,2\dots$}
				\State $[\xi_{k},s_{k}] \leftarrow \texttt{Busemann}(f_{i(k)}, x^{k})$
				\State $x^{k+1} \leftarrow P_C\left(r_{x_{k},\xi_{k}}\left(s_{k}t_k\right)\right)$
				\EndFor 		
				\end{algorithmic}
\label{alg:stochsub}
		\end{algorithm}

		Understanding the complexity of Algorithm \ref{alg:stochsub} is our immediate goal,
		towards which the next lemma takes us most of the way.
				\begin{lemma}\textbf{(Projected Busemann subgradient inequality)}
			\label{lem:projsub}
			Suppose the function $f\from X\to \R$ is Busemann subdifferentiable on a nonempty, closed, and 
			geodesically convex set $C$. Let $x\in C$, $t > 0$ and choose a
			Busemann subgradient $[\xi,s]$ for $f$ at $x$.
			Define the new point
			\[x^+ = \begin{cases}
					P_C\left(r_{x,\xi}\left( s t\right)\right), & s > 0\\
					x, & s = 0,
		\end{cases}\]
		where $r_{x,\xi}$ is the ray issuing from $x$ with direction $\xi$.
	Then for any $y \in C$,
	\[d(x^{+},y)^2\leq d(x,y)^2 - 2t (f(x)- f(y)) +s^2t^2.\]
		\end{lemma}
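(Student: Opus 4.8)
The plan is to mimic the classical one-step analysis of the projected subgradient method. First I would dispose of the degenerate case $s=0$: there $[\xi,s]=[0]$, which by the remark following Definition \ref{def:epihoro} means $x$ minimizes $f$ over $C$, so $f(x)\le f(y)$ for every $y\in C$; since also $x^+=x$, the desired inequality collapses to $d(x,y)^2\le d(x,y)^2-2t(f(x)-f(y))$, which holds. Henceforth assume $s>0$, write $r=r_{x,\xi}$ and $z=r(st)$, so that $x^+=P_C(z)$, and recall that the Busemann function $b_r$ of this ray coincides with $b_{x,\xi}$ (it vanishes at $r(0)=x$).

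Two ingredients then combine. The first is nonexpansiveness of the metric projection onto the closed geodesically convex set $C$, a consequence of \cite[Proposition II.2.4]{bridson}: for $y\in C$ we get $d(x^+,y)=d(P_C z,y)\le d(z,y)$. The second, which is the geometric heart of the matter, is the ``law of cosines'' estimate
\[
d(z,y)^2 ~\le~ d(x,y)^2 + 2st\,b_r(y) + s^2t^2 \qquad (y\in X),
\]
which holds with equality in Euclidean space. Granting this estimate, the Busemann subgradient inequality \eqref{eqn:busesub}, namely $f(y)\ge f(x)+s\,b_{x,\xi}(y)$, multiplied by $2t>0$, gives $2st\,b_r(y)\le -2t\,(f(x)-f(y))$; combining this with the estimate and with the projection bound $d(x^+,y)\le d(z,y)$ yields exactly the claimed inequality.

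To establish the estimate I would fix $y\in X$ and set $g(\tau)=d(r(\tau),y)-\tau$ for $\tau\ge 0$; by \cite[Lemma II.8.18(1)]{bridson} this function is nonincreasing with $g(\tau)\to b_r(y)$, hence bounded, and $b_r(y)$ is its limit. For any $\tau_1>st$, the CAT(0) hypothesis applied to the geodesic $r|_{[0,\tau_1]}$ and the point $y$ says $\tau\mapsto\tfrac12 d(r(\tau),y)^2$ is $1$-strongly convex; evaluating the strong-convexity interpolation inequality at $st=(1-\tfrac{st}{\tau_1})\cdot 0+\tfrac{st}{\tau_1}\cdot\tau_1$ and multiplying by $2$ gives
\[
d(z,y)^2 ~\le~ \Bigl(1-\tfrac{st}{\tau_1}\Bigr)d(x,y)^2 + \tfrac{st}{\tau_1}\,d(r(\tau_1),y)^2 - st\,\tau_1 + s^2t^2.
\]
Substituting $d(r(\tau_1),y)^2=(\tau_1+g(\tau_1))^2=\tau_1^2+2\tau_1 g(\tau_1)+g(\tau_1)^2$, the divergent term $\tfrac{st}{\tau_1}\tau_1^2=st\,\tau_1$ cancels the correction $-st\,\tau_1$, leaving a bound in which every remaining $\tau_1$-dependent term is controlled. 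Letting $\tau_1\to\infty$ and using $g(\tau_1)\to b_r(y)$ with $g$ bounded then produces the estimate.

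The main obstacle is precisely this geometric estimate. In linear space it is an identity from the law of cosines, but in a general Hadamard space it must be extracted by a limiting argument from the CAT(0) comparison inequality, and the subtlety is that the natural interpolation produces a divergent $\tau_1$-term which only cancels after expanding $d(r(\tau_1),y)^2$ via the Busemann function; one must also check the residual terms vanish in the limit, which is where boundedness of $\tau\mapsto d(r(\tau),y)-\tau$ enters. The remaining pieces — the $s=0$ reduction, nonexpansiveness of $P_C$, and the use of \eqref{eqn:busesub} — are routine.
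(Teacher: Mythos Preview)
Your proposal is correct and follows essentially the same route as the paper: dispose of $s=0$ via the minimizer observation, then for $s>0$ combine nonexpansiveness of $P_C$ with the CAT(0) strong-convexity interpolation along $r|_{[0,\tau_1]}$, let $\tau_1\to\infty$ to obtain $d(z,y)^2\le d(x,y)^2+2st\,b_r(y)+s^2t^2$, and finish with the Busemann subgradient inequality. The only cosmetic difference is that the paper packages the limit as $\lim_{\delta\to\infty}\frac{d(r(\delta),y)^2-\delta^2}{2\delta}=b_r(y)$ (their equation \eqref{eqn:quadraticbuse}), whereas you expand $d(r(\tau_1),y)=\tau_1+g(\tau_1)$ directly; these are the same computation.
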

		\begin{proof}
			For brevity we denote $r = r_{x,\xi}$. If $s = 0$ then the desired inequality reduces
			to $f(x)\leq f(y)$ for all $y\in C$, which holds because $x$ minimizes $f$ over $C$ 
			(cf.\ the discussion
				following Definition \ref{def:epihoro}). Thus we may assume
			$s > 0$.
			Since $X$ is Hadamard, $x\mapsto \frac{1}{2}d(x,y)^2$ is 1-strongly convex for any $y\in C$.
			It follows that for all
			$\delta \geq s t$ we have
			\begin{align*}
				d(x^{+},y)^2 &= d\left(P_C(r\hspace{-0.05cm}\left(st\right)),P_C(y)\right)^2\\
				&\leq d\left(r\hspace{-0.05cm}\left(st\right),y\right)^2\\
				&\leq \left(1-\frac{s t}{\delta}\right)d(x,y)^2 + \frac{s t}{\delta}
				d(r(\delta),y)^2 - \left(1 - \frac{s t}{\delta}\right)\frac{s t}{\delta}
				d(x,r(\delta))^2\\
				&= \left(1-\frac{s t}{\delta}\right)d(x,y)^2 + \frac{s t}{\delta}
				d(r(\delta),y)^2 + s^2t^2 - \delta s t\\
				&= \left(1-\frac{s t}{\delta}\right)d(x,y)^2 + \frac{s t}{\delta}(
			d(r(\delta),y)^2 - \delta^2) + 
			s^2t^2.
			\end{align*}
			The second line uses nonexpansivity of the projection $P_C$, see \cite[Theorem 2.1.12]{bacakbook}.
			Letting $\delta\to \infty$ and using once more \eqref{eqn:quadraticbuse} we deduce
						\[d(x^{+},y)^2 \leq d(x,y)^2 +  2 s t b_r(y) + s^2t^2.\]
			By (\ref{eqn:busesub}) we have
			$s b_r(y) 
			\leq -(f(x) - f(y))$.
			We conclude:
			\[d(x^{+},y)^2 \leq d(x,y)^2 +  2s t b_r(y) + s^2t^2 \leq
			d(x,y)^2 - 2 t(f(x)-f(y)) +s^2t^2.\]
		\end{proof}
		
The analysis of the stochastic subgradient method borrows from \cite[Lemma 3.6]{bacak-means} and 
			\cite[Theorem 8.35]{beckfirstorder}. Define the notation
			$f^k_{\text{best}} := \min_{i=1,\dots,k} f(x^i) - f_{\text{opt}}$.  For stochastic terminology, see \cite{resnick}.
			\begin{theorem}\textbf{(Complexity of stochastic Busemann subgradient method)}
			\label{thm:stochcomplexity}
			Suppose Assumption \ref{assumptionA} holds, and let $\st{t_k}_{k=0}^\infty$ be a sequence of positive 
			stepsizes and $\st{i(k)}_{k=0}^\infty$ a sequence of independent random variables distributed
			uniformly over $\st{1,\dots,m}$.
			Let $\st{x^k}_{k=0}^\infty$ be the sequence of iterates 
			generated by Algorithm \ref{alg:stochsub} with the above parameters.
			Suppose furthermore that the diameter of $C$ is bounded above by $D > 0$.
			If $t_k = \frac{D}{Lm\sqrt{k+1}}$ then for all $k\geq 2$,
			\[\mathbb{E}\left[\min_{i=1,\dots,k} f(x^i) - f_{\text{opt}}\right] \leq \frac{2(1+\log(3))mLD}{\sqrt{k+2}}.\]
			More generally, if $\sum_{k=0}^nt_k^2/\sum_{k=0}^nt_k \to 0$ as $n\to \infty$ then $\mathbb{E}[f^k_{\text{best}}] \to f_{\text{opt}}$ as $k\to \infty$ even if $C$ is unbounded.
	\end{theorem}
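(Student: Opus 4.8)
The plan is to combine the per-iteration estimate of Lemma \ref{lem:projsub} with a standard expectation-and-telescoping argument in the style of the classical stochastic subgradient analysis. First I would fix an optimal point $x^* \in X^*$ and apply Lemma \ref{lem:projsub} with $y = x^*$ to the function $f_{i(k)}$ at the iterate $x^k$, obtaining
\[
d(x^{k+1},x^*)^2 \le d(x^k,x^*)^2 - 2t_k\big(f_{i(k)}(x^k) - f_{i(k)}(x^*)\big) + s_k^2 t_k^2.
\]
Since every Busemann subgradient speed satisfies $s_k \le L$ by Assumption \ref{assumptionA}(v), the last term is bounded by $L^2 t_k^2$. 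Next I would take conditional expectation with respect to the history up to iteration $k$: because $i(k)$ is uniform on $\{1,\dots,m\}$ and independent of the past, $\mathbb{E}[f_{i(k)}(x^k) - f_{i(k)}(x^*) \mid \mathcal{F}_k] = \frac{1}{m}\big(f(x^k) - f(x^*)\big) = \frac{1}{m}\big(f(x^k) - f_{\text{opt}}\big) \ge 0$. Taking full expectations and writing $a_k := \mathbb{E}[d(x^k,x^*)^2]$ and $e_k := \mathbb{E}[f(x^k) - f_{\text{opt}}]$ gives the recursion
\[
a_{k+1} \le a_k - \frac{2t_k}{m} e_k + L^2 t_k^2.
\]

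Then I would sum this inequality from $k=0$ to $n$, producing $\frac{2}{m}\sum_{k=0}^n t_k e_k \le a_0 + L^2\sum_{k=0}^n t_k^2$, and bound $a_0 \le D^2$ using the diameter assumption. Since $\min_{i=1,\dots,n}(f(x^i)-f_{\text{opt}}) \le \frac{\sum_{k=0}^n t_k e_k}{\sum_{k=0}^n t_k}$ is not literally true in expectation (the min does not commute with expectation in the obvious direction), I would instead note $\mathbb{E}[\min_{i\le n}(f(x^i)-f_{\text{opt}})] \le \min_{i\le n} e_i \le \frac{\sum_{k=0}^n t_k e_k}{\sum_{k=0}^n t_k}$, where the second inequality holds because a weighted average dominates the minimum term. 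This yields
\[
\mathbb{E}\big[f^n_{\text{best}}\big] \le \frac{m\big(D^2 + L^2\sum_{k=0}^n t_k^2\big)}{2\sum_{k=0}^n t_k}.
\]
The general convergence claim follows immediately: if $\sum_{k=0}^n t_k^2 / \sum_{k=0}^n t_k \to 0$ and $\sum t_k = \infty$ (which that ratio condition together with boundedness of individual $t_k$ forces, or which can be assumed), the right-hand side tends to $0$. For the unbounded-$C$ case I would replace $D^2$ by the fixed finite quantity $d(x^0,x^*)^2$, which is legitimate since $x^*$ and $x^0$ are fixed.

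For the explicit rate with $t_k = \frac{D}{Lm\sqrt{k+1}}$, I would substitute and estimate the two sums: $\sum_{k=0}^n t_k^2 = \frac{D^2}{L^2m^2}\sum_{k=0}^n \frac{1}{k+1} \le \frac{D^2}{L^2m^2}(1+\log(n+1))$, and $\sum_{k=0}^n t_k = \frac{D}{Lm}\sum_{k=0}^n \frac{1}{\sqrt{k+1}} \ge \frac{D}{Lm}\cdot 2(\sqrt{n+2}-1)$ via the standard integral comparison $\sum_{k=0}^n (k+1)^{-1/2} \ge \int_0^{n+1}(t+1)^{-1/2}\,dt = 2(\sqrt{n+2}-1)$. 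Plugging in gives
\[
\mathbb{E}\big[f^n_{\text{best}}\big] \le \frac{m\big(D^2 + \frac{D^2}{m^2}(1+\log(n+1))\big)}{\frac{2D}{Lm}\cdot 2(\sqrt{n+2}-1)} \le \frac{mLD\,(1 + 1 + \log(n+1))}{2(\sqrt{n+2}-1)},
\]
and then for $n = k \ge 2$ I would absorb the $\sqrt{n+2}-1$ in the denominator and the logarithm bookkeeping into the stated constant $2(1+\log 3)$, using that $\log(k+1) \le \log 3 \cdot \text{(something)}$ is false in general, so more carefully I would keep $\log(k+1)$ and note $\frac{1+\log(k+1)}{\sqrt{k+2}} \cdot \frac{\sqrt{k+2}}{\sqrt{k+2}-1}$ is decreasing-ish and bounded, matching the claimed form after checking the constant at small $k$. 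The main obstacle I anticipate is precisely this last constant-chasing step: getting from the clean bound $O\big(\frac{mLD(1+\log k)}{\sqrt{k}}\big)$ to the exact stated constant $2(1+\log(3))mLD/\sqrt{k+2}$ requires care with the $\sqrt{k+2}-1$ versus $\sqrt{k+2}$ discrepancy and with how $\log(k+1)$ compares to a constant multiple of $1$ uniformly for $k \ge 2$ — likely one argues $1 + \log(k+1) \le (1+\log 3)\sqrt{k+2}/\sqrt{k+2}$ fails, so instead the bound must be $\frac{1+\log(k+1)}{2(1-1/\sqrt{k+2})} \le 2(1+\log 3)$ only after recognizing that the $\log$ growth is dominated once divided appropriately, or the intended reading is that an extra $\sqrt{\cdot}$ was absorbed; I would reconcile this by tracking constants honestly and invoking monotonicity of the relevant one-variable function on $[2,\infty)$.
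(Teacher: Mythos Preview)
Your recursion, conditional-expectation step, and the weighted-average bound on $\mathbb{E}[f^n_{\text{best}}]$ are all correct and match the paper.  The general convergence claim also follows as you describe.

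The genuine gap is in the explicit rate.  Summing from $k=0$ to $n$ you obtain
\[
\mathbb{E}\big[f^n_{\text{best}}\big] \;\le\; \frac{m\big(D^2 + L^2\sum_{k=0}^n t_k^2\big)}{2\sum_{k=0}^n t_k},
\]
and with $t_k = \frac{D}{Lm\sqrt{k+1}}$ this gives a bound of order $\frac{mLD(1+\log n)}{\sqrt{n}}$.  You cannot massage this into $\frac{C\,mLD}{\sqrt{n+2}}$ for any constant $C$: the quantity $\frac{1+\log(k+1)}{\sqrt{k+2}-1}$ that you end up with tends to infinity, so the ``constant-chasing'' you anticipate is not a bookkeeping issue but an actual missing idea.

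The missing ingredient is the \emph{suffix-sum} (or ``last half'') trick.  Instead of telescoping from $0$ to $n$, telescope from $j=\lceil n/2\rceil$ to $n$:
\[
\frac{2}{m}\sum_{k=j}^n t_k\, e_k \;\le\; \mathbb{E}\big[d(x^{j},x^*)^2\big] + L^2\sum_{k=j}^n t_k^2 \;\le\; D^2 + L^2\sum_{k=j}^n t_k^2.
\]
The diameter bound now applies to $d(x^{j},x^*)$ rather than $d(x^0,x^*)$, which is why boundedness of $C$ is genuinely used.  Over the window $\{j,\dots,n\}$ one has $\sum_{k=j}^n \frac{1}{k+1}\le \log 3$ (a constant, not $\log n$) while $\sum_{k=j}^n \frac{1}{\sqrt{k+1}}\gtrsim \sqrt{n}$, and this is exactly what kills the logarithm.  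The paper then quotes \cite[Lemma 8.27(b)]{beckfirstorder} to produce the stated constant $2(1+\log 3)$.
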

\begin{proof}
Denoting the nonnegative integers by $\N_0$, the underlying probability space is $\Omega = \st{1,\dots,m}^{\N_0}$ equipped with the product of the 
uniform $\st{1,\dots,m}$ probability measures.
At step $k \in \N_0$, choose a Busemann subgradient $[\xi_{k},s_{k}]$ for $f_{i(k)}$ at $x^k$. 
Then define $x^{k+1} := P_C(r_{x^k, \xi_{k}}(s_{k}t_k))$, as in Algorithm \ref{alg:stochsub}.
Consider a given (random) iterate $x^k$. 
Lemma \ref{lem:projsub} and Assumption \ref{assumptionA}(v) together imply
\[d(x^{k+1},x^*)^2 \leq d(x^k,x^*)^2 -2t_k(f_{i(k)}(x^k) - f_{i(k)}(x^*)) + L^2t_k^2.\]
Let $\fami_k = \sigma(x^0,\dots,x^k)$ be the $\sigma$-algebra generated by the variables $x^0,\dots,x^k$.
Taking the conditional expectation with respect to $\fami_k$ gives
\begin{align*}
	\E[d(x^{k+1},x^*)^2 \mid \fami_k] &\leq d(x^k,x^*)^2 -2t_k\E[f_{i(k)}(x^k)-f_{i(k)}(x^*) \mid \fami_k] + L^2t_k^2\\
&= d(x^k,x^*)^2 -\frac{2t_k}{m}\sum_{i=1}^m(f_{i}(x^k)-f_{i}(x^*)) + L^2t_k^2\\
&= d(x^k,x^*)^2 -\frac{2t_k}{m}(f(x^k)-f_{\text{opt}}) + L^2t_k^2.
\end{align*}
Now taking the expectation, we arrive at
\[\E[d(x^{k+1},x^*)^2] \leq \E[d(x^k,x^*)^2]-\frac{2t_k}{m} \E[f(x^k) - f_{\text{opt}}] + L^2t_k^2.\]
Summing from $k= j$ to $n\geq j$ and rearranging:
\[\frac{2}{m}\sum_{k=j}^n t_k \E[f(x^k)-f_{\text{opt}}] \leq \E[d(x^j,x^*)^2] + L^2\sum_{k=j}^nt_k^2.\]
By the elementary inequality $\E[\min_{k=j,\dots,n}f(x^k)] \leq \min_{k=j,\dots,n}\E[f(x^k)]$, we derive
\[\E\left[\min_{k=0,\dots,n} f(x^k) - f_{\text{opt}}\right] 
\leq \E\left[\min_{k=j,\dots,n}f(x^k) - f_{\text{opt}}\right] \leq \frac{m (\E[d(x^j,x^*)^2] + L^2\sum_{k=j}^nt_k^2)}{2\sum_{k=j}^n t_k}.\]
Finally, since the diameter of $C$ is at most $D$ we have $\E[d(x^j,x^*)^2] \leq D^2$. Choosing $t_k = \frac{D}{L\sqrt{k+1}}$, $j = \ceil{n/2}$,
and appealing to 
\cite[Lemma 8.27(b)]{beckfirstorder} gives 
\[\E\left[\min_{k=0,\dots,n} f(x^k) - f_{\text{opt}}\right]  \leq \frac{2(1+\log(3))mDL}{\sqrt{n+2}} ~ ~ ~ \text{ for all } n \geq 2.\]
The last statement of the theorem can be proven in exactly the same way
			 as \cite[Theorem 8.35(a)]{beckfirstorder}.
\end{proof}
A derandomized, incremental variant of Algorithm \ref{alg:stochsub} enjoys the same 
complexity bound. The procedure is detailed in Algorithm \ref{alg:incrsub}.
	\begin{algorithm}[h!]
			\caption{Incremental Busemann Subgradient Method}
			\begin{algorithmic}
				\Require $x^0 \in C, \st{t_k}_{k=0}^\infty \subset (0,+\infty)$
				\For{$k=0,1,2\dots$}
				\State $x^{k,0} \leftarrow x^k$
				\For{$i=0,1,\dots,m-1$}
				\State $[\xi_{k,i},s_{k,i}] \leftarrow \texttt{Busemann}(f_{i+1}, x^{k,i})$
				\State $x^{k,i+1} \leftarrow P_C\left(r_{x^{k,i},\xi_{k,i}}\left(s_{k,i}t_k\right)\right)$
				\EndFor
				\State $x^{k+1} \leftarrow x^{k,m}$
				\EndFor 		
				\end{algorithmic}
\label{alg:incrsub}
		\end{algorithm}
		The following lemma and its proof are straightforwardly adapted from \cite[Lemma 8.39]{beckfirstorder},
			which is itself an adaptation of the original 
		\cite[Lemma 2.1]{bertseknedic}.
		\begin{lemma}\textbf{(Incremental Busemann subgradient inequality)}
			\label{lem:incremental}
			Suppose Assumption \ref{assumptionA}  holds, and let $\st{x^k}_{k=0}^\infty$ be the sequence of iterates 
			generated by Algorithm \ref{alg:incrsub} with positive stepsizes $\st{t_k}_{k=0}^\infty$.
			Then for any $x^* \in X^*$ and $k\geq 0$,
			\[d(x^{k+1},x^*)^2 \leq d(x^k,x^*)^2 - 2t_k(f(x^k) - f_{\text{opt}}) + t_k^2m^2L^2.\]
		\end{lemma}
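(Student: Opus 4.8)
The plan is to unroll the inner loop of Algorithm~\ref{alg:incrsub}, apply the projected Busemann subgradient inequality (Lemma~\ref{lem:projsub}) once for each of the $m$ inner steps, telescope the distance terms, and then absorb the ``lag'' incurred by evaluating each component at the intermediate point $x^{k,i}$ rather than at $x^k$ into the quadratic error. This follows the template of \cite[Lemma 8.39]{beckfirstorder}, itself an adaptation of \cite[Lemma 2.1]{bertseknedic}.

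First, fix $k$ and an inner index $i\in\{0,\dots,m-1\}$. Since $f_{i+1}$ is Busemann subdifferentiable on $C$ (Assumption~\ref{assumptionA}(iii)) and $x^*\in X^*\subseteq C$, Lemma~\ref{lem:projsub} applied with the function $f_{i+1}$, the point $x^{k,i}$, stepsize $t_k$, the oracle subgradient $[\xi_{k,i},s_{k,i}]$, and the comparison point $x^*$ gives
\[
d(x^{k,i+1},x^*)^2 \le d(x^{k,i},x^*)^2 - 2t_k\big(f_{i+1}(x^{k,i}) - f_{i+1}(x^*)\big) + s_{k,i}^2 t_k^2,
\]
and $s_{k,i}\le L$ by Assumption~\ref{assumptionA}(v). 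Summing over $i=0,\dots,m-1$ and using $x^{k,0}=x^k$, $x^{k,m}=x^{k+1}$ telescopes the distance terms:
\[
d(x^{k+1},x^*)^2 \le d(x^k,x^*)^2 - 2t_k\sum_{i=0}^{m-1}\big(f_{i+1}(x^{k,i}) - f_{i+1}(x^*)\big) + mL^2 t_k^2.
\]

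Next I would replace each $f_{i+1}(x^{k,i})$ by $f_{i+1}(x^k)$, so that $\sum_{i=0}^{m-1}\big(f_{i+1}(x^k)-f_{i+1}(x^*)\big) = f(x^k)-f_{\text{opt}}$, at the cost of the error $\sum_{i=0}^{m-1}\big(f_{i+1}(x^{k,i})-f_{i+1}(x^k)\big)$. Each $f_{i+1}$ is $L$-Lipschitz on $C$ by Proposition~\ref{prop:liphoro}(i) (again using Assumption~\ref{assumptionA}(v)), so this error is at most $L\sum_{i=0}^{m-1} d(x^{k,i},x^k)$ in absolute value. To control the displacement, note that the ray $r_{x^{k,j},\xi_{k,j}}$ is unit-speed, so the point $r_{x^{k,j},\xi_{k,j}}(s_{k,j}t_k)$ lies at distance $s_{k,j}t_k\le Lt_k$ from $x^{k,j}$; since $P_C$ is nonexpansive and fixes $x^{k,j}\in C$, we get $d(x^{k,j},x^{k,j+1})\le Lt_k$, and hence $d(x^{k,i},x^k)\le iLt_k$ by the triangle inequality. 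Thus the error is bounded by $L^2 t_k\sum_{i=0}^{m-1} i = \tfrac{m(m-1)}{2}L^2 t_k$, giving $\sum_{i=0}^{m-1}\big(f_{i+1}(x^{k,i})-f_{i+1}(x^*)\big)\ge f(x^k)-f_{\text{opt}}-\tfrac{m(m-1)}{2}L^2 t_k$. Substituting this into the telescoped inequality and using $m(m-1)+m=m^2$ yields the claim.

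The only step that is more than bookkeeping is the accumulated-displacement estimate $d(x^{k,i},x^k)\le iLt_k$, which I expect to be the main (mild) obstacle: it relies on combining the unit-speed parametrization of the ray returned by the \texttt{Busemann} oracle with nonexpansiveness of $P_C$ and the uniform speed bound $L$ from Assumption~\ref{assumptionA}(v). One should also check that each invocation of Lemma~\ref{lem:projsub} remains valid in the degenerate case $s_{k,i}=0$, but this is precisely the trivial branch handled within that lemma.
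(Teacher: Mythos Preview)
Your proposal is correct and follows essentially the same approach as the paper: apply Lemma~\ref{lem:projsub} to each inner step, telescope, bound the lag $\sum_i\big(f_{i+1}(x^{k,i})-f_{i+1}(x^k)\big)$ via $L$-Lipschitzness from Proposition~\ref{prop:liphoro}(i), and control the accumulated displacement $d(x^{k,i},x^k)\le iLt_k$ using nonexpansiveness of $P_C$ and the speed bound $s_{k,i}\le L$. The final arithmetic $m(m-1)+m=m^2$ matches the paper's computation exactly.
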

		\begin{proof}
			Fix $i\in \st{0,1,\dots,m-1}$. Lemma \ref{lem:projsub} proves
			\[d(x^{k,i+1},x^*)^2 \leq
			d(x^{k,i},x^*)^2 - 2t_k(f_{i+1}(x^{k,i})-f_{i+1}(x^*)) + s_{k,i}^2t_k^2.\]
			Summing the inequality over $i=0,1,\dots,m-1$ and using the identities $x^{k,0} = x^k, x^{k,m} = x^{k+1}$
			we deduce:
			\begin{align}
				\label{eqn:firstestims}
				d(x^{k+1},x^*)^2 &\leq d(x^{k},x^*)^2 - 2t_k\sum_{i=0}^{m-1}(f_{i+1}(x^{k,i})-f_{i+1}(x^*)) + t_k^2
				\sum_{i=0}^{m-1}s_{k,i}^2 \nonumber \\
			&\leq d(x^{k},x^*)^2 - 2t_k\sum_{i=0}^{m-1}(f_{i+1}(x^{k,i})-f_{i+1}(x^*)) + t_k^2mL^2 \nonumber \\
			&=\! d(x^{k},x^*)^2 - 2t_k\left(f(x^k) - f_{\text{opt}} + \sum_{i=0}^{m-1}(f_{i+1}(x^{k,i})-f_{i+1}(x^k))\right) + t_k^2mL^2 \nonumber \\
			&\leq d(x^{k},x^*)^2 - 2t_k(f(x^k) - f_{\text{opt}}) + 2t_kL\sum_{i=0}^{m-1}d(x^{k,i},x^k) + t_k^2mL^2.
		\end{align}
		The second inequality uses Assumption \ref{assumptionA}(v), and the last
		inequality makes use of $L$-Lipschitz continuity via Proposition \ref{prop:liphoro}.	
		We aim to control the size of $d(x^{k,i},x^k), i=0,\dots,m-1$ in the last line above, so we start by estimating
		\begin{equation}
			\label{eqn:trianglelem}
			d(x^{k,1},x^k) = d(P_C(r_{\xi_{k,0}}(s_{k,0}t_k)),P_C(x^k)) \leq d(r_{\xi_{k,0}}(s_{k,0}t_k),x^k) = s_{k,0}t_k \leq  L t_k.
		\end{equation}
		This required nonexpansivity of the projection $P_C$ as well as Assumption \ref{assumptionA}(v).
		Moving on to $x^{k,2}$, we argue similarly
		\[d(x^{k,2},x^k) \leq d(r_{\xi_{k,1}}(s_{k,1}t_k),x^k) \leq 
		d(r_{\xi_{k,1}}(s_{k,1}t_k),x^{k,1}) + d(x^{k,1},x^k) \leq 2Lt_k.\]
		Here we used nonexpansivity of $P_C$, the triangle inequality, Assumption \ref{assumptionA}(v),
		and \eqref{eqn:trianglelem}. Iterating these estimates for $i=2,\dots, m-1$ we conclude 
			\[d(x^{k,i},x^k)\leq iLt_k ~ ~ ~ i = 0,1,\dots, m-1.\]
			Combined with \eqref{eqn:firstestims} we find:
			\begin{align*}
		d(x^{k+1},x^*)^2&\leq d(x^{k},x^*)^2 - 2t_k(f(x^k) - f_{\text{opt}}) + 2t_kL\sum_{i=0}^{m-1}d(x^{k,i},x^k) + t_k^2mL^2\\
			&\leq d(x^{k},x^*)^2 - 2t_k(f(x^k) - f_{\text{opt}}) + 2t_k^2L^2\sum_{i=0}^{m-1}i + t_k^2mL^2\\
			&=  d(x^{k},x^*)^2 - 2t_k(f(x^k) - f_{\text{opt}}) + t_k^2L^2m^2.
			\end{align*}
		\end{proof}
		
		With Lemma \ref{lem:incremental} in hand, the proof below
		is a rewrite of \cite[Theorem 8.40]{beckfirstorder}
		with the metric $d$ in place of the Euclidean distance.

		\begin{theorem}\textbf{(Complexity of incremental Busemann subgradient method)}
			\label{thm:complexity}
			Suppose Assumption \ref{assumptionA} holds, and let $\st{x^k}_{k=0}^\infty$ be the sequence of iterates 
			generated by Algorithm \ref{alg:incrsub} with positive stepsizes $\st{t_k}_{k=0}^\infty$.
			Suppose furthermore that the diameter of $C$ is bounded above by $D > 0$.
			If $t_k = \frac{D}{Lm\sqrt{k+1}}$ then for all $k\geq 2$,
			\[f^k_{\text{best}}  \leq \frac{2(1+\log(3))mLD}{\sqrt{k+2}}.\]
			More generally, if $\sum_{k=0}^nt_k^2/\sum_{k=0}^nt_k \to 0$ as $n\to \infty$ then $f^k_{\text{best}} \to f_{\text{opt}}$ as $k\to \infty$ even if $C$ is unbounded.
		\end{theorem}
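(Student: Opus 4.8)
The plan is to repeat the argument of Theorem \ref{thm:stochcomplexity}, now in a deterministic setting, with Lemma \ref{lem:incremental} supplying the basic recursion in place of the conditional-expectation estimate. Fix $x^* \in X^*$. Lemma \ref{lem:incremental} gives, for every $k \ge 0$,
\[
d(x^{k+1},x^*)^2 \le d(x^k,x^*)^2 - 2t_k\big(f(x^k) - f_{\text{opt}}\big) + t_k^2 m^2 L^2 .
\]
Summing from $k = j$ to $k = n$ telescopes the distance terms, and discarding $d(x^{n+1},x^*)^2 \ge 0$ yields
\[
2\sum_{k=j}^n t_k\big(f(x^k) - f_{\text{opt}}\big) \le d(x^j,x^*)^2 + m^2 L^2 \sum_{k=j}^n t_k^2 .
\]
Since each term $f(x^k) - f_{\text{opt}}$ is nonnegative, bounding $\min_{k=j,\dots,n}\big(f(x^k) - f_{\text{opt}}\big)\sum_{k=j}^n t_k \le \sum_{k=j}^n t_k\big(f(x^k)-f_{\text{opt}}\big)$ and using $f^n_{\text{best}} \le \min_{k=j,\dots,n} f(x^k) - f_{\text{opt}}$ for $j \ge 1$ gives
\[
f^n_{\text{best}} \le \frac{d(x^j,x^*)^2 + m^2 L^2 \sum_{k=j}^n t_k^2}{2\sum_{k=j}^n t_k} .
\]

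For the quantitative bound I would invoke $\diam C \le D$, so $d(x^j,x^*)^2 \le D^2$, then set $j = \lceil n/2\rceil$ and $t_k = \frac{D}{Lm\sqrt{k+1}}$. With this choice $\sum_{k=j}^n t_k^2 = \frac{D^2}{L^2m^2}\sum_{k=j}^n \frac{1}{k+1}$ and $\sum_{k=j}^n t_k = \frac{D}{Lm}\sum_{k=j}^n \frac{1}{\sqrt{k+1}}$, so
\[
f^n_{\text{best}} \le \frac{mLD}{2}\cdot\frac{1 + \sum_{k=\lceil n/2\rceil}^n \frac{1}{k+1}}{\sum_{k=\lceil n/2\rceil}^n \frac{1}{\sqrt{k+1}}} ,
\]
and \cite[Lemma 8.27(b)]{beckfirstorder} bounds the trailing ratio, yielding $f^n_{\text{best}} \le \frac{2(1+\log(3))mLD}{\sqrt{n+2}}$ for $n \ge 2$, exactly as in the stochastic case.

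For the asymptotic claim, when $C$ may be unbounded I would instead take $j = 0$, so that $d(x^0,x^*)^2$ is a fixed finite constant, and read off
\[
f^n_{\text{best}} \le \frac{d(x^0,x^*)^2 + m^2 L^2 \sum_{k=0}^n t_k^2}{2\sum_{k=0}^n t_k} .
\]
The hypothesis $\sum_{k=0}^n t_k^2 \big/ \sum_{k=0}^n t_k \to 0$ forces $\sum_{k=0}^\infty t_k = \infty$, since otherwise both partial sums would converge to positive limits and the ratio could not vanish. Hence the first term tends to $0$ because its denominator diverges, while the second term equals $\frac{m^2L^2}{2}$ times the vanishing ratio; so the right-hand side tends to $0$, giving the asserted convergence $\min_{i=1,\dots,n} f(x^i) \to f_{\text{opt}}$, matching \cite[Theorem 8.40]{beckfirstorder} and the last statement of Theorem \ref{thm:stochcomplexity}.

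I do not expect a genuine obstacle here: once Lemma \ref{lem:incremental} is available, everything reduces to telescoping a scalar recursion, a nonnegativity bound on a weighted average, and elementary estimates on $\sum \frac{1}{k+1}$ and $\sum \frac{1}{\sqrt{k+1}}$ packaged in \cite[Lemma 8.27(b)]{beckfirstorder}. The only points requiring a little care are the choice of summation start $j$ ($j = \lceil n/2\rceil$ for the explicit rate, $j = 0$ for the unbounded case) and the routine observation that $\sum_{k=0}^n t_k^2/\sum_{k=0}^n t_k \to 0$ already entails $\sum_k t_k = \infty$. The substantive work, namely controlling the intra-cycle drift $d(x^{k,i},x^k) \le iLt_k$ that produces the $t_k^2 m^2 L^2$ term rather than $t_k^2 mL^2$, was already dispatched in the proof of Lemma \ref{lem:incremental}.
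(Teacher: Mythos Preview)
Your proposal is correct and follows essentially the same route as the paper's proof: telescope Lemma \ref{lem:incremental}, bound the weighted minimum by the weighted sum, set $j=\lceil n/2\rceil$ with the diameter bound, and invoke \cite[Lemma 8.27(b)]{beckfirstorder}; for the final assertion the paper simply cites \cite[Theorem 8.40(a)]{beckfirstorder}, whereas you spell out the same argument. One tiny slip: you correctly note that $f^n_{\text{best}} \le \min_{k=j,\dots,n} f(x^k)-f_{\text{opt}}$ requires $j\ge 1$, but then take $j=0$ in the asymptotic part---just use $j=1$ there (the constant $d(x^1,x^*)^2$ is equally finite) and nothing else changes.
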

		\begin{proof}
			By Lemma \ref{lem:incremental}, for any $n\geq 0$
			\[d(x^{n+1},x^*)^2 \leq d(x^n,x^*)^2 - 2t_n(f(x^n) - f_{\text{opt}}) + t_n^2L^2m^2.\]
			Summing over $n= \ceil{k/2},\dots,k$ we find
			\[d(x^{k+1},x^*)^2 \leq d(x^{\ceil{k/2}},x^*)^2 - 2\sum_{n=\ceil{k/2}}^kt_n(f(x^n)-f_{\text{opt}}) 
			+L^2m^2\sum_{n=\ceil{k/2}}^kt_n^2.\]
			Rearranging gives
			\[2\sum_{n=\ceil{k/2}}^kt_n(f(x^n)-f_{\text{opt}}) \leq d(x^{\ceil{k/2}},x^*)^2 +L^2m^2\sum_{n=\ceil{k/2}}^kt_n^2.\]
			We readily estimate
			\[\min_{i=1,\dots,k}f(x^i) - f_{\text{opt}} \leq \frac{d(x^{\ceil{k/2}},x^*)^2 +L^2m^2\sum_{n=\ceil{k/2}}^kt_n^2}{2\sum_{n=\ceil{k/2}}^k t_n} \leq \frac{D^2 +L^2m^2\sum_{n=\ceil{k/2}}^kt_n^2}{2\sum_{n=\ceil{k/2}}^k t_n}.
			\]
			Plugging in 
			$t_n = D/(Lm\sqrt{n+1})$ we arrive at
			\[\min_{i=1,\dots,k}f(x^i) - f_{\text{opt}} \leq 
				\frac{mLD}{2} \frac{\left(1+\sum_{n=\ceil{k/2}}^k\frac{1}{n+1}\right)}{\sum_{n=\ceil{k/2}}^k \frac{1}{\sqrt{n+1}}}.
			\]
			Applying \cite[Lemma 8.27(b)]{beckfirstorder} gives the bound \[\frac{\left(1+\sum_{n=\ceil{k/2}}^k\frac{1}{n+1}\right)}{\sum_{n=\ceil{k/2}}^k \frac{1}{\sqrt{n+1}}} \leq \frac{4(1+\log(3))}{\sqrt{k+2}}\] and
			the final estimate follows. The last statement of the theorem can be proven in exactly the same way
			 as \cite[Theorem 8.40(a)]{beckfirstorder}.
		\end{proof}
	\end{section}
	\begin{section}{Subgradient methods for the median problem}
		On any Hadamard space $(X,d)$ with the geodesic extension property, we specialize to the \textit{median problem}
		\begin{equation}
			\label{eqn:median}
			\min\st{f(x) := \sum_{i=1}^mw_id(x,a_i) \mid x\in X},
		\end{equation}
		where $\aaa = \st{a_1,\dots,a_m} \seq X$ are given points and $w\in \R^m_+$ is a vector of nonnegative weights summing to one.
		It is well-known that problem (\ref{eqn:median}) admits at least one minimizer.
		The functions $f_i := w_i d(x,a_i), i = 1,\dots,m$ share a common Lipschitz constant of $w^* := \max_i w_i >0$,
		so Assumption \ref{assumptionA}(v) is satisfied thanks to Proposition 
		\ref{prop:liphoro}(ii).
		Each $f_i$ is Busemann subdifferentiable by Example \ref{ex:distp}, 
		with a Busemann subgradient $[r_{x,a_i}(\infty),w_i]$ at $x \neq a_i$, where $r_{x,a_i}$ denotes any ray originating at $x$ and passing through $a_i$,  and $[0]$ at $x=a_i$.
		To attain the stronger theoretical complexity guarantee in Theorem \ref{thm:complexity} we require a bound on the 
		diameter of the feasible region. The structure of the problem implies that the minimizers cannot be too far from
		points in $\aaa$. Let $a^*$ be the point in $\aaa$ corresponding to the coefficient
		$w^*$. Then for any minimizer $x^* \in X^*$ and any $x^0\in X$, we have
		\[w^*d(x^*,a^*) \leq \sum_{i=1}^mw_id(x^*,a_i) \leq \sum_{i=1}^mw_id(x^0,a_i) = f(x^0).\]
		It follows that $X^* \seq B_{f(x^0)/w^*}(a^*)$. This ball has diameter $D = 2f(x^0)/w^*$, and 
		projecting onto a ball is straightforward. 
		With Assumption \ref{assumptionA} verified,
		we can use our work above and the stepsize from Theorem \ref{thm:complexity} to specialize Algorithms 
		\ref{alg:stochsub} and \ref{alg:incrsub}
		to the median problem.
\begin{algorithm}
	\caption{Stochastic-Subgradient Median Algorithm}
			\begin{algorithmic}
				\Require $x^0 \in X$, $\st{i(k)}_{k=0}^\infty$ independent Uniform $\st{1,\dots,m}$
				\For{$k=0,1,2\dots$}
				\State $x^{k+1} \leftarrow 
				P_{B_{\frac{f\left(x^0\right)}{w^*}}(a^*)}\left(r_{x^{k},a_{i(k)}}\left(\frac{2w_{i(k)}f\left(x^0\right)}{w^*m\sqrt{k+1}}\right)\right)$
			\EndFor 		
				\end{algorithmic}
\label{alg:stochmedalg}
		\end{algorithm}

\begin{algorithm}
				\caption{Incremental Median Algorithm}
			\begin{algorithmic}
				\Require $x^0 \in X$
				\For{$k=0,1,2\dots$}
				\State $x^{k,0} \leftarrow x^k$
				\For{$i=0,1,\dots,m-1$}
				\State $x^{k,i+1} \leftarrow 
				P_{B_{\frac{f\left(x^0\right)}{w^*}}(a^*)}\left(r_{x^{k,i},a_{i+1}}\left(\frac{2w_{i+1}f\left(x^0\right)}{w^*m\sqrt{k+1}}\right)\right)$
				\EndFor
				\State $x^{k+1} \leftarrow x^{k,m}$
			\EndFor 		
				\end{algorithmic}
\label{alg:medalg}
		\end{algorithm}

		\begin{corollary}\textbf{(Median complexity)}
			\label{cor:medcomplex}
			Algorithm \ref{alg:stochmedalg} satisfies the following complexity bound for all $k\geq 2$:
			\[\mathbb{E}[f^k_{\text{best}}] \leq \frac{4(1+\log(3))mf\left(x^0\right) }{\sqrt{k+2}}.\]
			Algorithm \ref{alg:medalg} satisfies the same bound without the expectation.
		\end{corollary}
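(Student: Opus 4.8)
The plan is to derive the corollary as a direct specialization of the general complexity estimates, Theorem \ref{thm:stochcomplexity} and Theorem \ref{thm:complexity}, applied to the median problem recast as minimization of $f$ over the closed ball $C = B_{f(x^0)/w^*}(a^*)$, followed by substitution of the explicit constants. First I would verify Assumption \ref{assumptionA} in this setting. For (i) and (ii): $X$ is Hadamard with the geodesic extension property, and $C$ is nonempty --- it contains $x^0$, since $w^* d(x^0,a^*)\le \sum_i w_i d(x^0,a_i)=f(x^0)$ --- closed, and geodesically convex, being a sublevel set of the convex continuous function $d(\cdot,a^*)$. For (iii): each component $f_i = w_i\, d(\cdot,a_i)$ is Busemann subdifferentiable by Example \ref{ex:distp}. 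For (iv): the median problem admits a minimizer, as noted above, and the inequality displayed just before the corollary gives $X^*\subseteq C$, so the problem restricted to $C$ retains the same optimal value $f_{\text{opt}}$ and optimal set $X^*$. For (v): the Busemann subgradients of $f_i$ exhibited in Example \ref{ex:distp} have speed $w_i\le w^*$, so Assumption \ref{assumptionA}(v) holds with $L := w^*$ (alternatively, invoke Proposition \ref{prop:liphoro}(ii) for the $w_i$-Lipschitz function $f_i$).

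Next I would observe that, once the \texttt{Busemann} oracle is instantiated with these distance-function subgradients --- so that the ray issuing from the current iterate in the returned direction is exactly the ray through $a_{i(k)}$ traversed at speed $w_{i(k)}$ --- Algorithm \ref{alg:stochsub} run with stepsizes $t_k = D/(Lm\sqrt{k+1})$ becomes Algorithm \ref{alg:stochmedalg}, and likewise Algorithm \ref{alg:incrsub} becomes Algorithm \ref{alg:medalg}, with all projections taken onto $C = B_{f(x^0)/w^*}(a^*)$. Here $D = 2 f(x^0)/w^*$ is the diameter bound for $C$ computed in the discussion preceding the corollary.

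Finally I would substitute $L = w^*$ and $D = 2 f(x^0)/w^*$ into the complexity bound $2(1+\log 3)\, mLD/\sqrt{k+2}$ supplied by Theorem \ref{thm:stochcomplexity} for Algorithm \ref{alg:stochsub} (respectively by Theorem \ref{thm:complexity} for Algorithm \ref{alg:incrsub}). Since $mLD = m\, w^*\cdot 2 f(x^0)/w^* = 2 m f(x^0)$, the bound collapses to $4(1+\log 3)\, m f(x^0)/\sqrt{k+2}$, valid for every $k \ge 2$: in expectation for $f^k_{\text{best}}$ under the stochastic Algorithm \ref{alg:stochmedalg}, and as a deterministic bound on $f^k_{\text{best}}$ under the incremental Algorithm \ref{alg:medalg}.

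There is no real obstacle here; the work is essentially bookkeeping, and the step requiring most care is confirming that the translation between the generic algorithms and Algorithms \ref{alg:stochmedalg} and \ref{alg:medalg} is faithful --- that the displacement $s_k t_k$, with $s_k$ the returned speed and $t_k = D/(Lm\sqrt{k+1})$, coincides with the argument appearing inside the ray map in those algorithms, that restricting to $C$ leaves $f_{\text{opt}}$ and the definition of $f^k_{\text{best}}$ unchanged (which uses $X^*\subseteq C$), and that $\diam C \le 2 f(x^0)/w^*$, all established just before the statement.
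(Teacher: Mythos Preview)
Your proposal is correct and takes essentially the same approach as the paper: the paper's proof is the single line ``Set $D = 2f(x^0)/w^*$, $L = w^*$ in Theorems \ref{thm:stochcomplexity} and \ref{thm:complexity},'' and you carry out exactly this substitution, additionally spelling out the verification of Assumption \ref{assumptionA} and the inclusion $X^*\subseteq C$ that the paper handled in the discussion preceding the corollary.
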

		\begin{proof}
			Set $D =  2f(x^0)/w^*, L = w^*$ in Theorems \ref{thm:stochcomplexity} and \ref{thm:complexity}.
	\end{proof}
	
	An analogous approach applies to the problem of computing $p$-means, and more generally to objectives of the form $f(x) = \sum_{i=1}^m \phi_i(d(x,a_i))$, where each function $\phi_i \from \R \to \R$ is nondecreasing and convex, covering the additional case of Huber estimators \cite{Schotz2025}.  The requisite tools are Proposition \ref{prop:chain} (the chain rule) and Proposition \ref{distanceprop} (subgradients of distance functions).  The corresponding versions of Algorithms \ref{alg:stochsub} and \ref{alg:incrsub} can be written explicitly in a fashion analogous to Algorithms \ref{alg:stochmedalg} and \ref{alg:medalg}.  

\subsection*{Comparison with cyclic proximal point method}
We next compare our subgradient-based methods with the proximal point methods described in \cite{bacak-means}.  One distinction that we note at the outset is the constraint set $C$ in the subgradient-based methods, allowing us to model hard constraints by incorporating projections.  The analysis presented in \cite{bacak-means} does not cover this ingredient.

Focusing specifically on the median problem, one proximal point method from \cite{bacak-means} is Algorithm \ref{alg:proxalg}.

\begin{algorithm}
	\caption{Cyclic Proximal Median Algorithm \cite{bacak-means}}
\begin{algorithmic}
	\Require $x^0 \in X, \st{t_k}_{k=0}^\infty \subset (0,+\infty)$
\For{$k=0,1,2\dots$}
\State $x^{k,0} \leftarrow x^k$
\For{$i=0,1,\dots,m-1$}
\State $x^{k,i+1} \leftarrow 
r_{x^{k,i},a_{i+1}}\left(\min\st{d(x^{k,i},a_{i+1}),w_{i+1}t_k}\right)$
\EndFor
\State $x^{k+1} \leftarrow x^{k,m}$
\EndFor 		
\end{algorithmic}
\label{alg:proxalg}
\end{algorithm}

\noindent
When $X$ is proper, \cite{bacak-means} shows that for any sequence of positive stepsizes satisfying 
		\begin{equation}
			\label{eqn:rm}
			\sum_{k=0}^\infty t_k = \infty, ~ ~ ~\sum_{k=0}^\infty t_k^2 < \infty
		\end{equation}
(a choice motivated by stochastic variants),
the iterates $\st{x^k}_{k=0}^\infty$ generated by Algorithm \ref{alg:proxalg} converge to a median of $\aaa$. 
Although \cite{bacak-means} omits a formal complexity statement, the proof implies an $O(\eps^{-2})$ complexity
bound for finding $\eps$-minimizers (in terms of function values).

		Algorithm \ref{alg:proxalg} resembles a version of Algorithm \ref{alg:incrsub} with $C=X$.
The main difference is that the former requires only geodesic segments instead of rays, and indeed,
Algorithm \ref{alg:proxalg} does not rely on the geodesic extension property. 
		If the iterates remain bounded away from the set $\aaa$, the algorithms eventually resemble each other, because the stepsizes decay to zero, ensuring that each update $x^{k,i+1}$ lies in the geodesic segment $[x^{k,i},a_i]$, as we observe empirically.  The following example shows that this behavior can fail if the set $\aaa$ contains its median. 
		
		\begin{example}
			\label{ex:raysneeded}
			Consider the case $C=X =\R$ and $\aaa = \st{0}$. Algorithm \ref{alg:incrsub} amounts to the classical subgradient method applied to the function $|\cdot|$.
			Taking $x^0 = 1$ and $t_k = \frac{1}{k+2} + \frac{1}{k+1}$, Algorithm \ref{alg:incrsub} generates the sequence $x^k = \frac{(-1)^k}{k+1}$, overshooting the solution $0$ at each iteration, and hence requiring a ray oracle. Algorithm \ref{alg:proxalg}, on the other hand, generates a sequence in the interval $[0,1]$ monotonically decreasing to $0$, using the geodesic oracle.
		\end{example}
		
\noindent
Notwithstanding such examples, practical comparison (Example \ref{ex:ex1}) confirms our expectation that, for typical median problems the subgradient and proximal point methods (in this case Algorithms \ref{alg:medalg} and \ref{alg:proxalg}) are comparable.

\subsection*{Subgradient versus proximal methods:  a general comparison}

While Busemann-subgradient-based and proximal algorithms for computing $p$-means are comparable,
we point out several distinct advantages of the Busemann subgradient philosophy on more general problems.

First, each step in a subgradient method has a simple concrete interpretation in the underlying Hadamard space:  the step involves traversing a ray at a certain speed.  For example, given a smooth function on Euclidean space, that speed is just the slope of the function.  By contrast, proximal steps, being defined via a subproblem (\ref{eqn:proximal-update}), are more opaque.

Secondly, while we can compute proximal updates for a few simple instances like distance functions $d(a,\cdot)$, such examples are rare.  By contrast, subgradients are often available.  For example, the proximal update for a function of the form 
\[
x \mapsto \max_{a \in \aaa} d(a,x),
\]
for a nonempty finite set $\aaa \subset X$, seems expensive to approximate, whereas subgradients are easy to calculate via Remark \ref{rem:max}.  Analogously, any function expressed explicitly as a Busemann envelope has readily available subgradients.  Such functions might be useful as regularizers:  one well-known class are the sparsity-inducing polyhedral regularizers common in Euclidean optimization \cite{Vaiter2013}.

Lastly, as we have already noted, Busemann subgradient algorithms handle a nontrivial feasible region $C$ routinely, assuming that the projection $P_C$ is accessible.  Solving an optimization problem of the form $\inf_C \sum_{i=1}^m f_i$ using the proximal methods of \cite{bacak-means}, on the other hand, forces a choice of approaches, all with disadvantages.  One might model the hard constraint $x \in C$ using the indicator function $\delta_C$ as another summand, but the developments of \cite{bacak-means,OhtaPalfia2015} ostensibly concern only Lipschitz components.  An exact penalty function approach \cite[Proposition 6.3]{Clarke1998} lets us replace $\delta_C$ by a multiple of the distance function $\mbox{dist}_C$, which is Lipschitz.  However, even if all the components $f_i$ are $L$-Lipschitz, the required multiple must exceed $mL$, worsening the complexity estimates in comparison with the subgradient approach.  Finally, we might replace proximal steps for each $f_i$ with those for $f_i + \delta_C$, but unlike the unconstrained case, these steps may be hard to compute even when each $f_i$ is a distance function.

In summary, proximal algorithms are often hard to interpret and implement.  Stochastic and incremental subgradient methods provide a compelling alternative.
	\end{section}

	\begin{section}{Computational experiments}
		\label{sec:comp}
		We consider the BHV tree space $\mathcal{T}_n$ of binary trees on $n$ labelled leaves \cite{billeratree}. There are $(2n-3)!!$ such binary trees, each with $n-2$ internal edges.
		The space $\mathcal{T}_n$ models all such binary trees by ascribing an $(n-2)$-dimensional orthant $[0,\infty)^{n-2}$
		to each tree so that a point in each orthant describes a particular binary tree topology with a prescribed choice
		of nonnegative internal edge lengths. Tree space has proved an interesting model for
		comparison and averaging of phylogenetic trees. The space $\mathcal{T}_n$ was shown in 
		\cite{billeratree} to be CAT(0), is complete, and has the geodesic extension property, by \cite[Proposition II.5.10]{bridson} and \cite[p.\ 743]{billeratree}.

As a simple example, we focus on the space $\mathcal{T}_4$, where the binary trees have 4 labelled leaves.
Extending the computational examples below to $\mathcal{T}_n$ is straightforward: the only input to the code we use is a list of trees in \textit{Newick notation} \cite{archie1986newick}.
		$\mathcal{T}_4$ is a union of 15 two-dimensional quadrants.
		Our experiment uses Algorithms \ref{alg:stochmedalg} and \ref{alg:medalg} to estimate the median of a finite set of trees in $\mathcal{T}_n$, using the existing software package \texttt{SturmMean} \cite{sturmmean}, a polynomial-time algorithm \cite{treegeo} to compute geodesics in tree space. 
		Algorithms \ref{alg:stochmedalg} and \ref{alg:medalg} actually rely on rays which could extend beyond the span of a geodesic between two trees.  Such rays may not be unique,
and \texttt{SturmMean} does not compute them, but our examples involve stepsizes small enough to ensure that no extension is needed.  Unless otherwise stated, the initial point $x^0$ is the origin of tree space.
		
		\begin{example}
			\label{ex:ex1}
			Our first example comes from the documentation of \texttt{SturmMean} \cite{sturmmean}.
			This example is convenient because the trees embed simply in $\R^2$ in such a way that we can calculate
		the true median exactly, allowing us to demonstrate convergence of the best found function value
		to the optimal value. 
		Figure \ref{fig:trees} shows the three trees we consider, with equal weights
		$w_i = 1/3$ assigned to each tree.
		\begin{figure}
			
		\begin{center}

\tikzset{every picture/.style={line width=0.75pt}} 

\begin{tikzpicture}[x=0.75pt,y=0.75pt,yscale=-1,xscale=1,scale=0.9]

\draw  [color={rgb, 255:red, 0; green, 0; blue, 0 }  ,draw opacity=0 ][fill={rgb, 255:red, 0; green, 0; blue, 0 }  ,fill opacity=0.37 ] (160,60) -- (310,60) -- (310,210) -- (160,210) -- cycle ;
\draw    (310,210) -- (309.01,371) ;
\draw [shift={(309,373)}, rotate = 270.35] [color={rgb, 255:red, 0; green, 0; blue, 0 }  ][line width=0.75]    (10.93,-3.29) .. controls (6.95,-1.4) and (3.31,-0.3) .. (0,0) .. controls (3.31,0.3) and (6.95,1.4) .. (10.93,3.29)   ;
\draw    (310,210) -- (496,210.99) ;
\draw [shift={(498,211)}, rotate = 180.3] [color={rgb, 255:red, 0; green, 0; blue, 0 }  ][line width=0.75]    (10.93,-3.29) .. controls (6.95,-1.4) and (3.31,-0.3) .. (0,0) .. controls (3.31,0.3) and (6.95,1.4) .. (10.93,3.29)   ;
\draw    (310,210) -- (130,209.8) ;
\draw [shift={(128,209.8)}, rotate = 0.06] [color={rgb, 255:red, 0; green, 0; blue, 0 }  ][line width=0.75]    (10.93,-3.29) .. controls (6.95,-1.4) and (3.31,-0.3) .. (0,0) .. controls (3.31,0.3) and (6.95,1.4) .. (10.93,3.29)   ;
\draw    (310,210) -- (310,42) ;
\draw [shift={(310,40)}, rotate = 90] [color={rgb, 255:red, 0; green, 0; blue, 0 }  ][line width=0.75]    (10.93,-3.29) .. controls (6.95,-1.4) and (3.31,-0.3) .. (0,0) .. controls (3.31,0.3) and (6.95,1.4) .. (10.93,3.29)   ;
\draw  [fill={rgb, 255:red, 0; green, 0; blue, 0 }  ,fill opacity=1 ] (190.05,227.47) .. controls (190.08,226.07) and (191.24,224.96) .. (192.64,224.99) .. controls (194.03,225.02) and (195.14,226.18) .. (195.11,227.58) .. controls (195.08,228.97) and (193.93,230.08) .. (192.53,230.05) .. controls (191.13,230.02) and (190.02,228.87) .. (190.05,227.47) -- cycle ;
\draw  [fill={rgb, 255:red, 0; green, 0; blue, 0 }  ,fill opacity=1 ] (349.2,132.4) .. controls (348.76,133.73) and (347.33,134.45) .. (346.01,134.01) .. controls (344.68,133.57) and (343.96,132.14) .. (344.4,130.81) .. controls (344.84,129.48) and (346.27,128.76) .. (347.6,129.2) .. controls (348.92,129.64) and (349.64,131.08) .. (349.2,132.4) -- cycle ;
\draw  [fill={rgb, 255:red, 0; green, 0; blue, 0 }  ,fill opacity=1 ] (390.8,267.6) .. controls (390.36,268.92) and (388.92,269.64) .. (387.6,269.2) .. controls (386.27,268.76) and (385.55,267.33) .. (385.99,266.01) .. controls (386.43,264.68) and (387.86,263.96) .. (389.19,264.4) .. controls (390.52,264.84) and (391.24,266.27) .. (390.8,267.6) -- cycle ;
\draw    (446.8,48.8) -- (446.8,84.8) ;
\draw    (446.8,84.8) -- (424.8,108.8) ;
\draw    (446.8,84.8) -- (497.8,132.8) ;
\draw    (424.8,108.8) -- (409.8,137.8) ;
\draw    (424.8,108.8) -- (437.8,137.8) ;
\draw    (497.8,132.8) -- (483.8,159.8) ;
\draw    (497.8,132.8) -- (511.3,159.8) ;
\draw    (152.8,239.8) -- (152.8,275.8) ;
\draw    (152.8,275.8) -- (139.8,286.8) ;
\draw    (212.8,368.6) -- (236,397) ;
\draw    (139.8,286.8) -- (116.8,307.6) ;
\draw    (139.8,286.8) -- (212.8,368.6) ;
\draw    (212.8,368.6) -- (186,397) ;
\draw    (152.8,275.8) -- (172.8,299.8) ;
\draw    (479.8,253.6) -- (479.8,289.6) ;
\draw    (479.8,289.6) -- (432.8,317.6) ;
\draw    (367.8,359.2) -- (432.8,317.6) ;
\draw    (367.8,359.2) -- (347.8,381.6) ;
\draw    (367.8,359.2) -- (384.8,382.6) ;
\draw    (479.8,289.6) -- (498.8,315) ;
\draw    (432.8,317.6) -- (448.8,342) ;

\draw (194.58,230.92) node [anchor=north west][inner sep=0.75pt]    {$( -3,-1/2)$};
\draw (351.2,135.8) node [anchor=north west][inner sep=0.75pt]    {$( 1,2)$};
\draw (390.39,270.2) node [anchor=north west][inner sep=0.75pt]    {$( 2,-3/2)$};

\draw (405,139.4) node [anchor=north west][inner sep=0.75pt]    {$a$};
\draw (433,139.4) node [anchor=north west][inner sep=0.75pt]    {$b$};
\draw (478,160.4) node [anchor=north west][inner sep=0.75pt]    {$c$};
\draw (507,160.4) node [anchor=north west][inner sep=0.75pt]    {$d$};

\draw (112,309.4) node [anchor=north west][inner sep=0.75pt]    {$a$};
\draw (182,397.4) node [anchor=north west][inner sep=0.75pt]    {$b$};
\draw (232,397.4) node [anchor=north west][inner sep=0.75pt]    {$c$};
\draw (170,299.4) node [anchor=north west][inner sep=0.75pt]    {$d$};

\draw (344,382.2) node [anchor=north west][inner sep=0.75pt]    {$a$};
\draw (381,383.2) node [anchor=north west][inner sep=0.75pt]    {$b$};
\draw (444.8,342) node [anchor=north west][inner sep=0.75pt]    {$c$};
\draw (493.8,315) node [anchor=north west][inner sep=0.75pt]    {$d$};

\end{tikzpicture}

		\end{center}
		\caption{Three trees in $\mathcal{T}_4$ with neighboring respective orthants, embedded isometrically in $\R^2$
		(recreated from \cite{sturmmean}).}
\label{fig:trees}
\end{figure}
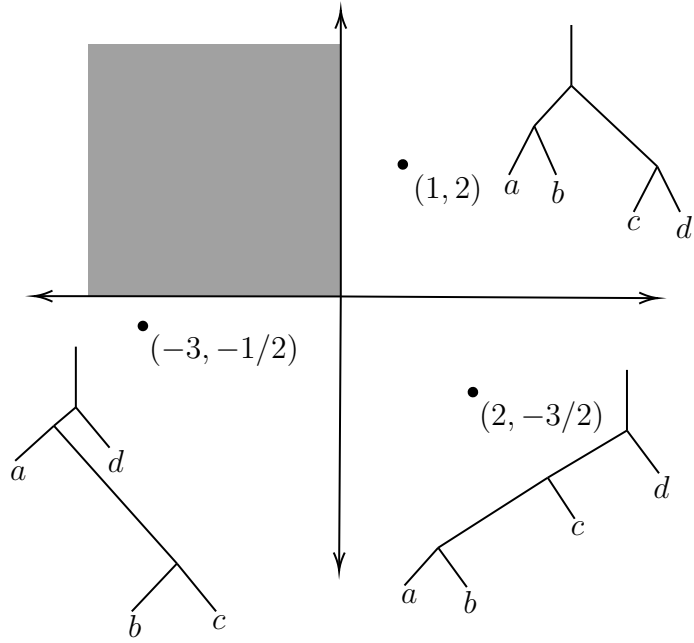
		For $x$ in the top right quadrant we have an explicit representation for $f$:
		\[f(x) = \frac{1}{3}\left(\|x - (1,2)\| + \|x - (2,-3/2)\| + \|x\| + \frac{\sqrt{37}}{2}\right).\]
		From here it is easy to check that 
		\[x^* = \left(\frac{2657-1038\sqrt{3}}{1898},\frac{3006 - 1369\sqrt{3}}{5694}\right)\]
		satisfies the optimality condition $\grad f(x^*) = 0$. Hence $x^*$ is a local minimizer of $f$, 
		and the convexity of $f$ on $\mathcal{T}_4$ implies $x^*$ is a global minimizer.
		Thus we set 
		\[f_{\text{opt}} = f(x^*) = \frac{1}{3}\sqrt{\frac{1}{2}\left(43+11\sqrt{3}+\sqrt{37(49+22\sqrt{3})}\right)}.\]
		The result of $10^5$ iterations of
		Algorithms \ref{alg:stochmedalg} and \ref{alg:medalg} are shown in Figure \ref{fig:medianplot}. We also tried initial points far from the median, having tree topologies not corresponding to one of the quadrants containing the given trees and with larger branch lengths. 
		\begin{figure}
			\begin{center}
				\includegraphics[scale=0.75]{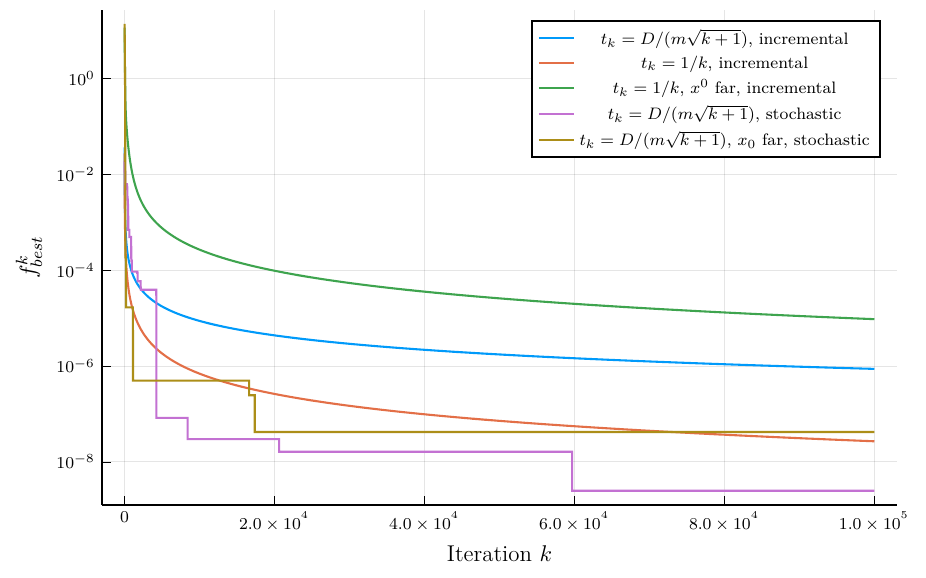}
			\caption{Convergence of $f(x^k)$ to $f_{\text{opt}}$ in Example \ref{ex:ex1} using different stepsizes
			and choices of initial tree.}
\label{fig:medianplot}
		\end{center}
		\end{figure}
		
				We illustrate Algorithm \ref{alg:medalg} in Figure \ref{fig:medianplot}. 
		Under the regime $C = X$ and $t_k = 1/k$, our experiments indicated that
		this algorithm coincides with Algorithm \ref{alg:proxalg} (in line with our earlier discussion), so to avoid confusion, we only show results for the former.  In spite of the stronger theoretical complexity guaranteed
		by the choice of stepsize in Theorem \ref{thm:complexity}, the stepsize $t_k = 1/k$ seems to achieve better
		performance based on the plots in Figures \ref{fig:medianplot} and \ref{fig:medianplot2}. 
		This behavior specifically relates
		to the incremental subgradient algorithm; our experiments showed that the stochastic algorithm performed
		poorly with stepsizes $t_k=1/k$ (such results were omitted from the plots), whereas
		its performance was very competitive using $t_k \sim 1/\sqrt{k}$.  

	\end{example}

	\begin{example}
		\label{ex:ex2}
		Now we consider three trees in $\mathcal{T}_4$ whose median lies on the common boundary ray 
		of three quadrants, with each quadrant containing one of the trees. We refer to this common boundary ray as the spine.
		A figure illustrating the local geometry of this setup
		can be found in \cite[Figure 10]{billeratree}. The three neighboring quadrants arise by permuting the labels on the
		leaves of the left subtree of the leftmost tree in Figure \ref{fig:treetop}. The spine corresponds to the second tree in this 
		same figure, obtained by contracting the lower internal edge so that the left subtree leaves become siblings. 
		The label $L$ on the separated leaf remains fixed throughout.

		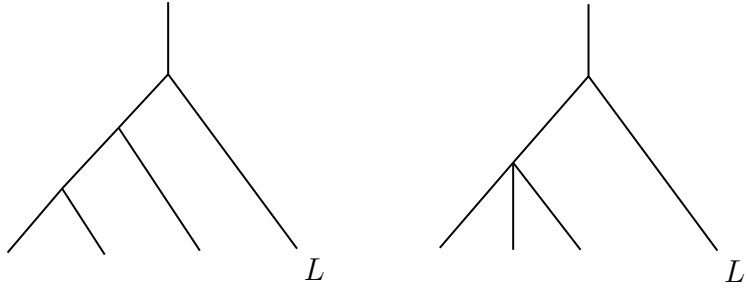
\begin{figure}
			\begin{center}

\tikzset{every picture/.style={line width=0.75pt}} 

\begin{tikzpicture}[x=0.75pt,y=0.75pt,yscale=-1,xscale=1,scale=0.8]

\draw    (229.8,95.2) -- (229.8,131.6) ;
\draw    (229.8,131.6) -- (204.8,158.6) ;
\draw    (176.32,189.28) -- (148.8,221.6) ;
\draw    (176.32,189.28) -- (197.8,222.6) ;
\draw    (229.8,131.6) -- (294.8,219.6) ;
\draw    (204.8,158.6) -- (245.8,220.6) ;
\draw    (176.32,189.28) -- (204.8,158.6) ;
\draw    (441.8,96.2) -- (441.8,132.6) ;
\draw    (441.8,132.6) -- (403.8,176.2) ;
\draw    (403.8,176.2) -- (366.8,219.2) ;
\draw    (403.8,176.2) -- (403.8,220.2) ;
\draw    (441.8,132.6) -- (506.8,220.6) ;
\draw    (403.8,176.2) -- (437.8,220.2) ;

\draw (296.8,223) node [anchor=north west][inner sep=0.75pt]    {$L$};
\draw (508.8,224) node [anchor=north west][inner sep=0.75pt]    {$L$};

\end{tikzpicture}

\caption{The tree topology defining three neighboring quadrants and their common spine.}
			\label{fig:treetop}
		\end{center}
		\end{figure}

		The median landing on a negligible subset illustrates the well-known phenomenon of \textit{stickiness\/}: in certain cubical complexes,  the mean of a randomly generated set of points will lie on a lower-dimensional face with positive probability
		\cite{stickylim}. We observe similarly with the median: letting $p_i, i= 1,2,3$ denote the point $(1,1)$ in 
		some ordering of the three neighboring quadrants, the median clearly lies on the spine, and
		one can show
		that for $\eps = 1/(1+2\sqrt{3})$ the median remains stuck on the spine even after perturbing 
		each of the $p_i$ within the box $p_i + [-\eps,\eps]^2$ in their respective quadrants. In this way we obtain three points 
		whose median lies on the spine:
		\[a = \left(\frac{2\sqrt{3}}{1+2\sqrt{3}}, \frac{2\sqrt{3}}{1+2\sqrt{3}}\right), ~ ~ ~ b = (1,1),
		~ ~ ~ c = \left(\frac{2+2\sqrt{3}}{1+2\sqrt{3}},\frac{2+2\sqrt{3}}{1+2\sqrt{3}}\right).\]
		Note that $a,b,c$ correspond directly to trees with the given internal branch lengths and the three tree topologies
		implicit in Figure \ref{fig:treetop}.
		The height of the median on the spine can be calculated as the optimal solution to the following problem:
		\[\min_y \sqrt{(y-a_2)^2 +a_1^2} + \sqrt{(y-b_2)^2+b_1^2} + \sqrt{(y-c_2)^2 + c_1^2}.\]
		Numerically, we find the optimal value to be $y^* \approx 0.9668\ldots$. 
		This gives an estimate for the optimal value:  $f_{\text{opt}} \approx 1.0168$.
		Initialized at the origin, the results of running Algorithms \ref{alg:stochmedalg} and \ref{alg:medalg}
		on this problem for $10^5$ 
		iterations are shown in Figure \ref{fig:medianplot2}. We also tested the algorithms initialized at
		a point $x^0$ far from the median, in the same sense as Example \ref{ex:ex1}. 
		Reflecting on both the experimental results of both examples, we can appreciate the efficiency of the
		stochastic algorithm;  it 
		keeps pace with the incremental algorithm using only $1/3$ of the Busemann subgradient oracle calls.
	 
		\begin{figure}
			\begin{center}
			\includegraphics[scale=0.75]{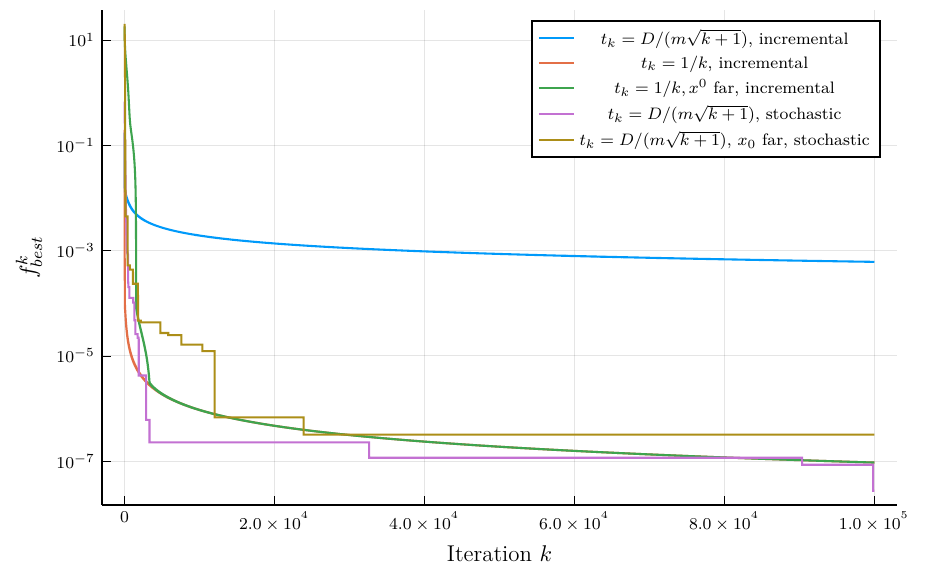}
			\caption{Convergence of $f(x^k)$ to $f_{\text{opt}}$ in Example \ref{ex:ex2} using different
			stepsizes and choices of initial tree.}
		\label{fig:medianplot2}
		\end{center}
		\end{figure}

	\end{example}
	
\bigskip

\noindent
{\bf\large Acknowledgement:}  The authors thank two anonymous reviewers for many helpful and substantial suggestions that greatly improved this work.

	\end{section}
		
	\bibliographystyle{plain}
	\small
\parsep 0pt

\appendix
\section*{Appendix}
\section{Proof of Proposition \ref{prop:boundconv}} \label{appendix-A}
			Suppose first that $\xi_n\to \xi$ in $X^\infty$, which is to say $b_{\xi_n}\to b_{\xi}$ uniformly on 
			bounded subsets of $X$. Fix $\delta > 0$ and let $\eps > 0$ be arbitrary. By assumption, there
			exists $N \in \N$ such that
			\begin{equation}
				\label{eqn:cauchybuse}
		 	|b_{\xi_m}(r_n(\delta)) +\delta| = 
			|b_{\xi_m}(r_n(\delta)) - b_{\xi_n}(r_n(\delta))| < \frac{\eps^2}{2\delta}  \text{ for all } n,m \geq N.
		\end{equation}
		The inequality in \cite[Lemma II.8.21(2)]{bridson} asserts
		\[\frac{d(r_n(\delta),r_m(\delta))^2}{2\delta} \leq d(r_m(\delta+t),r_n(\delta)) - d(r_m(\delta+t),r_m(\delta)) 
		\text{ for all } t\geq 0.\]
		Making the substitution $s = \delta + t$, this simplifies to
		\[\frac{d(r_n(\delta),r_m(\delta))^2}{2\delta} \leq d(r_m(s),r_n(\delta)) - s + \delta  \text{ for all } s \geq \delta.\]
		Letting $s\to \infty$ we conclude
		\[\frac{d(r_n(\delta),r_m(\delta))^2}{2\delta} \leq b_{\xi_m}(r_n(\delta)) + \delta. \]
		Now use (\ref{eqn:cauchybuse}) to bound the righthand side by $\eps^2/(2\delta)$, which leads to
		\[d(r_n(\delta),r_m(\delta)) \leq \eps  \text{ for all } n,m \geq N.\]
		Since $\eps > 0$ was arbitrary, the sequence $\st{r_n(\delta)}_{n=1}^\infty$ is Cauchy and thus converges since $X$ is complete. It
		remains to prove that its limit is $r(\delta)$.
	
		{By \cite[Proposition II.8.22]{bridson}, we know that $r(\delta)$ is the unique minimizer of
		$b_\xi$ on the sphere of radius $\delta$ around $\bar x$, denoted by $S_\delta(\bar x)$.
		Thus, $\min_{z\in S_\delta(\bar x)}b_\xi(z) = b_\xi(r(\delta)) = -\delta$. Likewise, it is easy to see from
		\eqref{eqn:cauchybuse} that $\lim_{n\to \infty}b_\xi(r_n(\delta)) = -\delta$. As the limit
		$\bar z = \lim_{n\to \infty} r_n(\delta)$ is in $S_\delta(\bar x)$, we conclude that 
		\[-\delta = \min_{z\in S_\delta(\bar x)}b_\xi(z) \leq b_\xi(\bar z) = \lim_{n\to \infty}b_\xi(r_n(\delta)) = -\delta.\]
		Thus equality holds throughout, and $b_\xi(\bar z) = -\delta$. The uniqueness of $b_\xi$'s minimizer on
	$S_\delta(\bar x)$ forces $\bar z = \lim_{n\to \infty}r_n(\delta) = r(\delta)$ as desired.}

Conversely, let us suppose that $r_n(\delta)\to r(\delta)$ for all $\delta > 0$ and prove $b_{\xi_n}\to b_\xi$
		uniformly on bounded subsets. It suffices to prove $b_{\xi_n}\to b_\xi$ uniformly on each ball $B_\delta(\bar x)$
		for all $\delta > 0$. Fix $\delta > 0$ and let $\eps > 0$ be arbitrary.
The inequality in 
		\cite[Lemma II.8.21(1)]{bridson} asserts the existence of $R > 0$ such that
		\begin{equation}
			\label{eqn:trianglebuse}
			0\leq d(z,r_n(R)) + t - d(r_n(R+t),z) \leq \eps \text{ for all } z \in B_\delta(\bar x), t \geq 0, n\in \N.
		\end{equation}
		Let $z\in B_\delta(\bar x), n,m \in \N$ and $t\geq 0$. Applying first the triangle inequality and then 
		\eqref{eqn:trianglebuse}, we deduce
		\begin{align*}
			d(z,r_n(R+t)) - d(z,r_m(R+t)) &\leq d(z,r_n(R)) + t - d(z,r_m(R+t))\\
			&= d(z,r_m(R)) + t - d(z,r_m(R+t))\\
			&~ ~ ~ + d(z,r_n(R)) - d(z,r_m(R))\\
			&\leq \eps + |d(z,r_n(R)) - d(z,r_m(R))|.
		\end{align*}
		After switching the roles of $m$ and $n$ above we finally get 
		\begin{equation}
			\label{eqn:trianglebuse2}
		|d(z,r_n(R+t)) - d(z,r_m(R+t))| \leq \eps + |d(z,r_n(R)) - d(z,r_m(R))|.
\end{equation}

	Letting $t\to \infty$ in (\ref{eqn:trianglebuse2}), we deduce the following inequality for all $z\in B_\delta(\bar x)$:
	\[|b_{\xi_n}(z) - b_{\xi_m}(z)| \leq \eps + |d(z,r_n(R)) - d(z,r_m(R))| \leq \eps + d(r_n(R),r_m(R)).\]
	Taking the supremum over $z\in B_\delta(\bar x)$, it follows that 
	\[\limsup_{n,m\to \infty}\sup_{z\in B_\delta(\bar x)}|b_{\xi_n}(z)-b_{\xi_m}(z)|\leq \eps  \text{ for all } \eps > 0.\] 
	As a consequence, $\lim_{n,m\to \infty} \sup_{z\in B_\delta(\bar x)}|b_{\xi_n}(z)-b_{\xi_m}(z)| = 0$.
	That is to say $\st{b_{\xi_n}}_{n=1}^\infty$ satisfies
	the Cauchy criterion for uniform convergence on $B_\delta(\bar x)$ as desired.
	
\section{Proof of Lemma \ref{lem:convergence}}
			Suppose first that $[\xi_n,s_n] \to [0]$. Then $[\xi_n,s_n]$ is eventually 
			contained in any open neighborhood of $[0]$. Let $q\from X^\infty \times \R_+\to CX^\infty$ be the quotient map.
			For any $\eps > 0$ define the set $W_\eps = q(X^\infty\times [0,\eps))$.
			It is readily verified that $q^{-1}(W_\eps) = X^\infty \times [0,\eps)$,
				which is open in $X^\infty\times \R_+$. 
				Thus by the definition of the quotient topology on 
				$CX^\infty$, the set $W_\eps$ is open in $CX^\infty$ and contains $[0]$. 
				It follows that $[\xi_n,s_n]$ is eventually contained in $W_\eps$, which then
				implies $s_n < \eps$ eventually. Since $\eps > 0$ was arbitrary, $s_n \to 0$.

				For the converse implication when $s = 0$ we assume $X$ is proper, hence
				$X^\infty$ is compact. 
				Suppose $s_n \to 0$ and fix any neighborhood $U$ of $[0]$. 
				Then for each $\zeta \in X^\infty$ there exists an open set $W_\zeta \seq X^\infty$ and 
				$\eps_\zeta > 0$ such that
				$(\zeta,0)\in W_\zeta \times [0,\eps_\zeta) \seq q^{-1}(U)$ because $q^{-1}(U)$ is open in $X^\infty
				\times \R_+$. The sets $\st{W_\zeta \times [0,\eps_\zeta)}_{\zeta \in X^\infty}$ form an open cover
				of the compact set $X^\infty \times \st{0}$, from which we select a finite subcover
				$\st{W_{\zeta_i}\times [0,\eps_{\zeta_i})}_{i=1}^N$. Let $V :=  \bigcup_{i=1}^N W_{\zeta_i}\times [0,\eps_{\zeta_i})$. To summarize, $X^\infty \times \st{0} \seq V \seq q^{-1}(U)$.
				Since $s_n$ is eventually smaller than $\min_{i=1,\dots,N}\eps_{\zeta_i}$,
				we have $(\xi_n,s_n) \in V$ for all $n$ sufficiently large. Then $[\xi_n,s_n]\in q(V) \seq q(q^{-1}(U)) = U$ for all $n$ sufficiently large, which says $[\xi_n,s_n]$ converges to $[0]$ since the open set $U$ around $[0]$ 
				was arbitrary.

				Now suppose $[\xi_n,s_n]\to [\xi,s]$ with $s > 0$. As before, $[\xi_n,s_n]$ is eventually
				contained in any open neighborhood of $[\xi,s]$. For any $\eps > 0$ with $s - \eps > 0$, and any
				open set $U \seq X^\infty$ containing $\xi$, consider the set 
				$W_{U,\eps} = q(U \times (s-\eps, s+ \eps))$. 
				As before it is easy to check that $q^{-1}(W_{U,\eps}) = U \times (s-\eps, s+\eps)$ which is
				open in $X^\infty \times \R_+$, hence $W_{U,\eps}$ is open in $CX^\infty$. It follows
				that $[\xi_n,s_n]$ is eventually contained in $W_{U,\eps}$, which implies $\xi_n \in U$ eventually
				and $s_n \in (s-\eps,s+\eps)$ eventually. Since $U,\eps$ were arbitrary, we conclude $\xi_n\to \xi$ and
				$s_n \to s$. Conversely, suppose $\xi_n \to \xi$ and $s_n\to s > 0$. 
				If $U$ is an open neighborhood of $[\xi,s]$
				then $(\xi,s) \in q^{-1}(U)$ so there exists an open set $W \seq X^\infty$ and $\eps > 0$ such that $(\xi,s) \in W \times (s-\eps,s+\eps) \seq q^{-1}(U)$. Then $\xi_n \in W$ eventually, and $s_n \in (s-\eps,s+\eps)$ eventually, implying
				$[\xi_n,s_n] \in q(W\times (s-\eps,s+\eps))\seq q(q^{-1}(U)) = U$ for all $n$ large.

		\begin{remark}
			The converse implication in Lemma \ref{lem:convergence} can fail if $X$ is not proper due to the lack
			of compactness for $X^\infty$. Indeed, if $X$ is an infinite-dimensional Hilbert space
			then $X^\infty$ is homeomorphic to the unit sphere $S\seq X$ with the norm topology.
			But passing to the quotient 
			topology introduces undesirable open sets. Choose any continuous function $f\from S \to (0,+\infty)$ with infimal
			value 0 (the existence of such a function hinges on the noncompactness of $S$), and let $\st{\xi_n}_{n=1}^\infty$ be an infimizing sequence. The set 
			$V = \st{(\xi,s) \in X^\infty \times \R_+ \mid s < f(\xi)}$ is open in $X^\infty \times \R_+$ because 
			$f$ is continuous. Moreover, $X^\infty \times \st{0} \seq V$ and $q^{-1}(q(V)) = V$. Thus $q(V)$ is an open neighborhood
			of $[0]$ in $CX^\infty$, we have $f(\xi_n) \to 0$, but $[\xi_n,f(\xi_n)] \notin q(V)$ for any $n\in \N$
			and so does not converge to $[0]$.
		\end{remark}

\section{Proof of Example \ref{ex:euclid}}

			We will show that if $C\seq \R^n$ is convex and
			$f\from C \to \R$ is convex and locally Lipschitz on $C$ then $f$ has a (Euclidean) subgradient at each point 
			 in $C$. 
			For any $x\in C$, if $f$ is locally 
			Lipschitz one can find $L, \delta > 0$ such that 
			$|f(y) - f(z)| \leq L \|y-z\|$ for all $y\in C \cap B_{\delta}(x) =: C'$.
			Define $g\from \R^n \to (-\infty,+\infty]$ as follows: 
			\[ g(y) = \begin{cases}
					f(y), & y\in C'\\
					+\infty, & \text{else}.
			\end{cases}\]
			Then set $h(y) = \inf_{z\in \R^n} \st{g(z) + L\|y-z\|}$. We claim that 
			$h$ is finite-valued, convex, and agrees with $f$ on $C'$. Clearly $h(y) < +\infty$ for any $y$ because
			$C'$ is nonempty, while $h(y) > -\infty$ because $C'$ is bounded and
			\[h(y) = \inf_{z\in C'} \st{f(z) + L\|y-z\|} \geq f(x) + L\inf_{z\in C'}\st{\|y-z\|-\|z-x\|} > -\infty.\]
			Thus $h$ is finite-valued, and convexity of $h$ follows by recognizing $h$ as the 
			infimal convolution of the convex function $g$ with the finite-valued convex function 
			$L\|\cdot\|$. Given $y\in C'$ we observe $h(y) \leq f(y)$ because $y$ is feasible for the infimum defining
			$h$, while on the other hand the $L$-Lipschitz property for $f$ on $C'$ implies 
			\[f(z) + L\|y-z\| \geq f(y) \text{ for all } z\in C'.\]
			Taking the infimum over $z\in C'$ implies $h(y) \geq f(y)$, hence $h \equiv f$ on $C'$.

			Now, $h$ is a real-valued convex function on $\R^n$ and thus admits a subgradient $v$ at $x$. 
			We claim that $v$ is a subgradient of $f$.
			The subgradient inequality for $h$ reads
			\[h(y)\geq h(x) + v^T(y-x) \text{ for all } y\in \R^n.\]
			Restricting to points in $C'$ where $h \equiv f$ this shows 
			$\tilde{f} := f-v^T(\cdot- x)$ is minimized
			over $C'$ at $x$. By definition of $C'$ it follows that $x$ is a local minimizer of $\tilde f$ on $C$,
			from which convexity of $\tilde f$ implies $x$ minimizes $\tilde f$ on $C$. This says that 
			$v$ is a subgradient of $f$ at $x$.
		
\section{Proof of Example \ref{ex:convcont}}
					If $f \from \R^n \to \R$ is convex, then Fenchel biconjugation allows us to write it in the form
				$f(x) = \sup\st{y^Tx - f^*(y) \from y\in \R^n}$ where $f^* \from \R^n \to \clos \R$ is the convex conjugate of $f$.
				The supremum is always attained by choosing any subgradient $y \in \p f(x)$, 
				which is always nonempty since $f$ 
				is continuous. Furthermore, our earlier remarks show that we can identify
				$C(\R^n)^\infty$ with $\R^n$ itself via the bijection $\varphi \from C(\R^n)^\infty \to \R^n$,
				$\varphi([\xi,s]) = -s\xi$ (viewing $\xi$ as an element of $\mathbb{S}^{n-1}$). If we 
					adopt the reference point $\bar x = 0$ then our Busemann functions take the form
					$b_\xi(z) = -\xi^Tz$. Then we derive:
					\begin{align*}
						f(x) &= \max \st{y^Tx - f^*(y) \from y\in \R^n}\\
						&= \max\st{\varphi([\xi,s])^Tx - (f^*\circ \varphi)([\xi,s]) \from [\xi,s] \in C(\R^n)^\infty}\\
						&= \max\st{-s\xi^Tx - (f^*\circ \varphi)([\xi,s]) \from [\xi,s] \in C(\R^n)^\infty}\\
						&= \max\st{\ip{x,[\xi,s]} - (f^*\circ \varphi)([\xi,s]) \from [\xi,s] \in C(\R^n)^\infty}.
					\end{align*}
					Defining $g := f^* \circ\varphi$, we see that $f = g^\circ$ and since the supremum is always
					attained $f$ is a Busemann envelope.
				Thus we recover the class of real-valued convex functions on $\R^n$. More generally, 
				this argument works for continuous convex functions on a Hilbert space.

\section{Proof of Example \ref{ex:dist}}
			
			 Let $\tilde{r}(t) = r_{x,a}(d(x,a) + t)$ be a ray issuing
			from $a$ in direction $\xi_x$, so $b_{a,\xi_x} = b_{\tilde{r}}$. Then 
			\begin{equation}
				\label{eqn:busedist}
				b_{a,\xi_x}(x) = b_{\tilde{r}}(x) = \lim_{t\to \infty}(d(x,r_{x,a}(d(x,a)+t)) - t) = d(x,a).
		\end{equation}
			Now write:	
			\begin{align}
				\sup\st{\ip{x,[\xi,s]} - g([\xi,s]) \from [\xi,s] \in CX^\infty} &=
				\sup\st{ \ip{x,[\xi,s]} - g([\xi,s]) \from [\xi,s] \in K} \nonumber \\
				&= \sup\st{\ip{x,[\xi_z,1]} - b_{\xi_z}(a) \from z\in X\setminus \st{a}} \nonumber\\
				&= \sup\st{b_{\xi_z}(x) - b_{\xi_z}(a) \from z\in X\setminus \st{a}} \label{eqn:distalign}.
			\end{align} 
			If $x = a$ then this last expression is identically zero, so choose any $z\in X\setminus \st{a}$ and observe
			$[\xi_z,1]\in CX^\infty$ attains the supremum in (\ref{eqn:distalign}).
			We have the upper bound 
			$b_{\xi_z}(x) - b_{\xi_z}(a) \leq d(x,a)$ for all $z\in X\setminus \st{a}$ because Busemann 
			functions are 1-Lipschitz. On the other hand, if $x\neq a$ then choosing 
			$z = x$ we use Proposition \ref{additive} and  (\ref{eqn:busedist})
			to conclude 
			\[b_{\xi_x}(x) - b_{\xi_x}(a) = b_{a,\xi_x}(x) = d(x,a).\] 
			Thus the supremum in (\ref{eqn:distalign}) is attained by $[\xi_x,1] \in CX^\infty$.
			It follows that for all $x\in X$ the supremum in the first line above is always attained, 
			allowing us to say 
			\[d(x,a) = \max\st{\ip{x,[\xi,s]} - g([\xi,s]) \from [\xi,s] \in CX^\infty} \]
			is Busemann subdifferentiable by Proposition \ref{ex:busemax}.

\section{Proof of Example \ref{ex:distballs}}
If $x\in H_r$ then $\dist(x,H_r) = 0 = \max\st{0,b_r(x)}$ so it remains to consider $x\notin H_r$.
		We use the Lipschitz property of $b_r$ and the definition of $H_r$ to deduce
		\[b_r(x) \leq b_r(y) + d(x,y) \leq d(x,y) \text{ for all } y\in H_r. \]
		Taking the infimum over $y\in H_r$ implies $b_r(x) \leq \dist(x,H_r)$.
		On the other hand, for any $t\geq 0$ one has $B_t(r(t)) \seq H_r$ because 
		$s\mapsto d(x,r(s)) - s$ is nonincreasing.
			As a consequence, 
		$\dist(x,H_r) \leq \dist(x,B_t(r(t)))$. But since $x\notin H_r$ we must have
		$x\notin B_t(r(t))$ for any $t\geq 0$, so
		$\dist(x,B_t(r(t)) = d(x,r(t))- t$. To summarize,
			\[\dist(x,H_r)\leq d(x,r(t)) - t \text{ for all } t\geq 0.\]
			As $t\to \infty$ we get $\dist(x,H_r) \leq b_r(x)$, proving $\dist(x,H_r) = b_r(x)$
			for $x\notin H_r$.
			
\section{Proof of Proposition \ref{biconjugate}}
		The definition of the biconjugate implies $f^{\bullet\circ} \leq f$. Assuming $f$ is Busemann subdifferentiable,
		choosing a Busemann subgradient $[\xi,s]$ at a point $x\in X$ implies via \eqref{eqn:prefench} that
		\[f(x) = \ip{x,[\xi,s]}- f^\bullet([\xi,s]) \leq f^{\bullet\circ}(x).\]
		Thus $f = f^{\bullet\circ}$.
		Suppose conversely that $f = f^{\bullet\circ}$, $f$ is continuous, and $X$ is proper, so $X^\infty$ is compact. Then one has
		\[f(x) = \sup_{[\xi,s]\in CX^\infty}{\ip{x,[\xi,s]} - f^\bullet([\xi,s])}.\]
		The conjugate $f^\bullet$ is lower semicontinuous as a supremum of continuous functions, so the function
		$\phi_x([\xi,s]) = \ip{x,[\xi,s]} - f^\bullet([\xi,s])$ is upper semicontinuous. Choose a maximizing sequence
		$\st{[\xi_n,s_n]}_{n=1}^\infty$, meaning 
		$f(x) = \lim_{n\to \infty} \ip{x,[\xi_n,s_n]} - f^\bullet([\xi_n,s_n])$.
		The sequence $\st{\xi_n}_{n=1}^\infty$ has a convergent subsequence because $X^\infty$ is compact, so without loss
		of generality say $\xi_n\to \xi$. If $\st{s_n}_{n=1}^\infty$ is unbounded
		then after extracting a subsequence we may assume $s_n\to \infty$.
		Then for each $R > 0$, letting $\bar x$ be the underlying basepoint for all Busemann functions,
		\begin{align*}
			\frac{f^\bullet([\xi_n,s_n])}{s_n} &\geq \sup_{y\in B_R(\bar x)}\st{\ip{y,[\xi_n,1]} - \frac{f(y)}{s_n}}\\
			&\geq \sup_{y\in B_R(\bar x)} b_\xi(y) - \sup_{y\in B_R(\bar x)}\frac{f(y)}{s_n}\\
			&= R - \frac{\sup_{y\in B_R(\bar x)} f(y)}{s_n}. 
		\end{align*}
		The supremum of $f$ on $B_R(\bar x)$ is finite because $f$ is continuous. 
		Letting $n\to \infty$ reveals 
		\[\liminf_{n\to \infty} \frac{f^\bullet([\xi_n,s_n])}{s_n} \geq R.\]
		Since $R > 0$ was arbitrary, $\lim_{n\to \infty} \frac{f^\bullet([\xi_n,s_n])}{s_n} = +\infty$.
		Then
		\[f(x) = 
		\lim_{n\to \infty} s_n\left( b_{\xi_n}(x) - \frac{f^\bullet([\xi_n,s_n])}{s_n}\right)
		\leq \lim_{n\to \infty} s_n \left(d(x,\bar x) - \frac{f^\bullet([\xi_n,s_n])}{s_n}\right) = -\infty.
	\]
		This contradicts the finite-valuedness of $f$, so $\st{s_n}_{n=1}^\infty$ is bounded. Extracting a further 
		subsequence, we may assume $s_n \to s\geq 0, \xi_n \to \xi$. Upper semicontinuity of 
		$\phi_x$ ensures
		\[\ip{x,[\xi,s]} - f^\bullet([\xi,s]) = \phi_x([\xi,s]) \geq \limsup_{n\to \infty} \phi_x([\xi_n,s_n]) = f(x).\]
		Thus equality holds in the Fenchel-Young inequality \eqref{eqn:prefench2}, meaning $[\xi,s]$ is a Busemann
		subgradient for $f$ at $x$. Since $x$ was arbitrary, we are done.

\section{Proof of Proposition \ref{prop:horoepi}}

To prove the implication (ii) $\Rightarrow$ (i), we suppose that $f$ has a horospherically convex epigraph. Let $x \in X$ and consider the point
		$(x,f(x))$ in the boundary of $\epi f$. Since $\epi f$ is horospherically convex, there exists
		a ray $r\from \R_+ \to X\times \R$ issuing from $(x,f(x))$ such that $\epi f \seq b_r^{-1}((-\infty,0])$.
		The point at infinity $r(\infty)$ lies in $(X\times \R)^\infty$, which is homeomorphic to the 
		spherical join of $X^\infty$ and $\R^\infty$ (see \cite[Example II.8.11(6)]{bridson}), so we may assume
		$b_r(z,t) = \alpha b_{r_1}(z) + \beta b_{r_2}(t)$ for rays $r_1\from \R_+\to X,r_2\from \R_+ \to \R$
		and scalars $\alpha,\beta \geq 0$ satisfying $\alpha^2+\beta^2 =1$. 
		Realizing that $r_2$ must be of the form $r_2(t) = f(x) \pm t$, we derive the following inequality
		for all $(y,s) \in \epi f$:
		\begin{equation}
			\label{eqn:busesplit}
		0 \geq b_r(y,s) = \alpha b_{r_1}(y) \pm \beta (f(x) - s).
	\end{equation}
		If $\beta = 0$, then 
		$b_{r_1}(y) \leq 0$ for all $y\in X$. Since $X$ has the geodesic extension property, we can
		extend $r_1$ into a geodesic line $r_1'$, and obtain a contradiction:
		\[0\geq b_{r_1}(r_1'(-1)) = 1.\]
		If the coefficient of $\beta > 0$ is $-1$, then taking $y = x, s = f(x) + \lambda$ for $\lambda > 0$ one eventually
		obtains a contradiction for $\lambda$ sufficiently large. Thus the coefficient of $\beta$ must be $+1$.
		Finally, taking $s = f(y)$ for each $y\in X$ we observe directly from \eqref{eqn:busesplit} 
		that $f$ has Busemann subgradient
		$[r_1(\infty),\alpha/\beta]$ at $x$.  We have proved property (i).
				
		Next, to prove the implication (iii) $\Rightarrow$ (ii), suppose $f$ is Busemann subdifferentiable and continuous. Let $(x,t)$ lie in the boundary of $\epi f$. 
		By continuity of $f$, $t = f(x)$.
		Busemann subdifferentiability of $f$ at $x$ yields a Busemann subgradient $[\xi,s]$ such that
		\[f(y) \geq f(x) + sb_{x,\xi}(y)\]
		for all $y\in X$. Equivalently,
		\[ 0\geq f(x) - t + sb_{x,\xi}(y) \text{ for all $(y,t) \in \epi f$ }. \]
		Normalizing by a factor of $(1+s^2)^{-1/2}$, this becomes
		\[0 \geq \frac{1}{\sqrt{1+s^2}}(f(x) - t) + \frac{s}{\sqrt{1+s^2}}b_{x,\xi}(y) \text{ for all $(y,t) \in \epi f$ }.\]
		This is exactly the Busemann function for the ray 
		\[r(t) = \left(r_{x,\xi}\left(\frac{s}{\sqrt{1+s^2}}t\right),f(x) + \frac{1}{\sqrt{1+s^2}}t\right),\]
		which supports $\epi f$ at $(x,f(x))$ by the inequality above.  This completes our proof of property (ii).
		
Now suppose that the space $X$ is proper. We first show the implication (i) $\Rightarrow$ (ii).	
By Proposition \ref{biconjugate}, 
		\[f(x) = f^{\bullet\circ}(x) = \sup_{[\xi,s]\in CX^\infty}\st{\ip{x,[\xi,s]} - f^\bullet([\xi,s])}.\]
		Thus $f$ is a supremum of nonnegatively scaled Busemann functions, whose epigraphs are 
		easily seen to be horoballs in $X\times \R$. 
		The epigraph of $f$ is therefore an intersection of horoballs, and such intersections are horospherically convex
		when $X$ is proper: see 
		\cite[Theorem 10]{criscitiello2025} for a proof in the manifold case which generalizes, nearly verbatim, 
		to proper Hadamard spaces. This proves (ii).

Finally, assuming that the space $X$ is proper, we prove the implication (i) $\Rightarrow$ (iii).  Any Busemann subdifferentiable function is automatically lower semicontinuous, so it suffices to prove that for each $\bar \alpha \in \R$ the strict level set 
		 $L = \st{x\in X\mid f(x) < \bar \alpha}$ is open. Consider any point $\bar x\in L$.
		Then $(\bar x,\bar\alpha) \in \epi f$, and if 
		$(\bar x,\bar \alpha)$ lies in $\inter (\epi f)$ then there exists $\eps > 0$ such that 
		$f(x)\leq \alpha$ whenever $d(x,\bar x)^2 + (\alpha - \bar \alpha)^2 < \eps^2$. 
		This ensures that if $d(x,\bar x) < \eps$ then $f(x) < \bar \alpha$, so 
		$\bar x$ lies in the interior of $L$. On the other hand, if $(\bar x,\bar \alpha)$ is not in the interior of $\epi f$ then it must lie on the boundary.  Property (ii) implies the existence of
a supporting horoball to 
$\epi f$ at $(\bar x,\bar \alpha)$. As in the proof of (ii) $\Rightarrow$ (i), we deduce the 
		existence of a ray $r\from \R_+\to X$ issuing from $\bar x$
		and scalars $\rho\geq 0, \sigma > 0$ such that $\rho^2 + \sigma^2 = 1$
		and
		\[\rho b_r(y) + \sigma (\bar \alpha - s) \leq 0 ~ \text{ for all } (y,s) \in \epi f.\]
		Noting $(\bar x, f(\bar x))\in \epi f$ yields the contradiction
		$\bar \alpha \leq f(\bar x)$, completing the proof.

\section{Proof of Example \ref{ex:distnotbuse2}}
			Let $\bar x = (0,0,0)$ be the reference point with respect to which we define our Busemann functions.
			Suppose for a contradiction that $f$ has a Busemann subgradient $[\xi,s]$ at $x_0 = (1,1,0)$. 
			As $x_0$ is not a minimizer of $f$, we have $s > 0$. Moreover,
			\begin{equation}
				\label{eqn:subatxzero}
				f(y) - sb_\xi(y) \geq \frac{\sqrt{2}}{2} - sb_\xi(x_0) \text{ for all } y\in X.
			\end{equation}

			\textbf{Case 1:} Suppose $\xi$ is the direction of the ray $\gamma$ issuing from $\bar x$ and passing
			through $x_0$. Taking $y = (1/2,1/2,0)$ we have $f(y) = 0, b_\xi(y) = -\sqrt{2}/2$, and $b_\xi(x_0) = -\sqrt{2}$, in
			contradiction to \eqref{eqn:subatxzero}.
			
			\textbf{Case 2:} Suppose $\xi$ is the direction of a ray $r$ in the plane $\st{z = 0}$ issuing from $\bar x$ and obtained
			by moving $\gamma$ an angle $\alpha\in (0,3\pi/4]$ in the trigonometric sense. Taking $y = (3/2,1/2,0)$, we have
			$f(y) = \sqrt{2}/2$ and $d(\bar x,y) = \sqrt{5}/\sqrt{2}$. Moreover, for all $t > 0$,
			\[					d(x_0,r(t))^2 = 2 + t^2 -2\sqrt{2}t\cos\alpha
			\]			and
\[				d(y,r(t))^2 = \frac{5}{2} + t^2 -2t(\sqrt{2}\cos\alpha- (1/\sqrt{2})\sin\alpha).\]
Hence, $b_\xi(x_0) = -\sqrt{2}\cos\alpha$ and $b_\xi(y) = -\sqrt{2}\cos\alpha + (1/\sqrt{2})\sin\alpha$, in contradiction to \eqref{eqn:subatxzero}.
			
\textbf{Case 3:} Suppose that  $\xi$ is the direction of a ray $r$ in the quadrant $\st{x\leq 0, y = 0, z\geq 0}$. Taking $y=(3/2,1/2,0)$,
we have $f(y) = \sqrt{2}/2$ and $b_\xi(x_0) \leq \sqrt{2}$.

Fix $t > 0$ and let $v = (-s,0,0)$, where $s\geq 0$, such that $d(y,r(t)) = d(y,v) + d(v,r(t))$. Then $s\leq t$
and
\[d(y,r(t)) = d(v,r(t)) + \sqrt{\frac{5}{2}+s^2+3s} \geq t-s + \sqrt{\frac{5}{2}+s^2+3s} \geq \sqrt{\frac{5}{2}+t^2+3t}.\]
Therefore $b_\xi(y) \geq 3/2 > b_\xi(x_0)$, a contradiction. The remaining cases follow by symmetry.

	
\section{Proof of Example \ref{ex:nonhoro}}
		
			Defining $\tilde{x}= (2/5,0,0), \bar x = (1/4,1/4,0)$, we do some preliminary calculations:
			\begin{equation}
\label{eqn:prelim}
f(\tilde{x}) = 37/25, ~ ~ ~ f(\bar{x}) = 13/8.
	\end{equation}
			Any ray issuing from $\bar x$ is determined by a choice of 
			unit vector $v = (v_1,v_2,0)$, or equivalently an angle $\theta\in[0,2\pi]$ such that 
			$v = (\cos \theta,\sin \theta,0)$. By symmetry it suffices to consider $\pi/4 \leq \theta \leq 5\pi/4$.
			We denote a ray $r$ originating from $\bar x$ in direction $v$ by $r_v$.
		
		\textbf{Case 1:} 
		Corresponding to the case $\theta = 5\pi/4$, consider the direction $v= (-1/\sqrt{2},-1/\sqrt{2},0)$,
		pointing from $\bar x$ to the origin $\mathbf{0} := (0,0,0)$. Since the ray ultimately extends up the spine
		$\st{0}\times \st{0}\times
		\R_+$, for $t > 0$ sufficiently large the geodesic between $r_v(t)$ and any $x\in \R_+\times\R_+ \times\st{0}$ 
		consists of the two segments $[x,\mathbf{0}] \cup [\mathbf{0},r_v(t)]$. Hence
		\[d(x,r_v(t)) = \|x\|+ \|(t-\|\bar x\|)(0,0,1)\| = \|x\| + t- \|\bar x\| \text{ for all } t \geq \|\bar x\|.\]
	The corresponding Busemann function at such an $x$ is thus
	\[b_{r_v}(x) = \|x\| - \|\bar x\|.\]
		Plugging in $\tilde x$ we find $b_{r_v}(\tilde x) = 2/5 - \sqrt{2}/4 = (8-5\sqrt{2})/20 > 0$. On the other hand, the preliminary calculations (\ref{eqn:prelim}) show $f(\tilde{x}) < f(\bar x)$, so $\tilde{x}\in f_{\bar x}\setminus \st{b_{r_v}\leq 0}$
		i.e.\ this horoball does not contain the given level set.
		
		\textbf{Case 2:} Now consider any direction $v$ corresponding to $\pi < \theta < 5\pi/4$. After sufficient time,
		the point $r_v(t)$ will inhabit the upright quadrant
		$\R_-\times\st{0}\times\R_+$. By imagining the quadrant $\R_-\times\st{0}\times\R_+$ being folded down into 
		$\R_- \times \R_-\times\st{0}$, we see that for $t > 0$ large enough  the distance $d(\mathbf{0},r_v(t))$ is equal to the Euclidean distance $\|\bar x + tv\|$ when we identify $\bar x,v$ with vectors in $\R^3$.
		Since the geodesic from $\tilde{x}$ to such an $r_v(t)$ consists again of the 
		two segments $[\tilde{x},\mathbf{0}]\cup[\mathbf{0},r_v(t)]$,
		it follows that
		\[d(\tilde{x},r_v(t)) = \|\tilde{x}\| + \|\bar x + tv\|.\]
		The corresponding Busemann function evaluated at $\tilde{x}$ is thus 
		\[b_{r_v}(\tilde{x}) = \|\tilde{x}\| + v^T\bar x \geq \|\tilde{x}\| - \|\bar x\| > 0.\]
		As in the previous case we conclude $\tilde{x}\in f_{\bar x}\setminus \st{b_{r_v}\leq 0}$.
		
		\textbf{Case 3:} Finally, consider directions $v$ corresponding to $\pi/4 \leq \theta \leq \pi$. Then we 
		have $d(\tilde{x}, r_v(t)) = \|\tilde{x} - \bar x - tv\|$, hence
		$b_{r_v}(\tilde{x}) = v^T(\bar x - \tilde{x})$. Using $v = (\cos\theta,\sin\theta,0)$ we find:
		\[b_{r_v}(\tilde{x}) =  -\frac{3}{20} \cos\theta + \frac{1}{4}\sin \theta.\]   
		Using calculus, one can show that the righthand side is a concave function of $\theta$ on $[\pi/4,\pi]$ and so attains its minimum at an endpoint. It is easy to check that the values at these endpoints are both positive, so $b_{r_v}(\tilde{x}) > 0$ for all such $\theta$. As before, $\tilde{x}\in f_{\bar x} \setminus \st{b_{r_v}\leq 0}$.
		We conclude that no ray issuing from $\bar x$, which lies in the boundary of $f_{\bar x}$, supports $f_{\bar x}$, so $f_{\bar x}$ is not horospherically convex.

		Define the geodesic $\eta \from [0,1]\to X, \eta(t) = \bar x + (t/\sqrt{2})(1,1,0)$.
		We will show that $([\eta],3/\sqrt{2}) \in T_{\bar x} X$ is a subgradient for $f$ at $\bar x$. 
		According to \eqref{eqn:cat0subgrad} and the value computed for $f(\bar x)$ in \eqref{eqn:prelim},
		it suffices to prove 
		\begin{equation}
			\label{eqn:finalsubgrad}
		\frac{3}{\sqrt{2}}d(\bar x,y)\cos \angle (\eta,\gamma_y) + \frac{13}{8} \leq f(y) \text{ for all } y\in X.
	\end{equation}
		Despite the initial requirement that this inequality holds for all $y\in X$, 
		it is shown in \cite[Remark 3.3(iv)]{sumrule} that the subdifferential of a geodesically convex function 
		depends only on the function locally. Thus it suffices to verify \eqref{eqn:finalsubgrad} on a small ball 
		around $\bar x$ contained in the quadrant $Q = \R_+\times\R_+\times\st{0}$, say $B_{1/5}(\bar x)$.
		For $y = (y_1,y_2,0) \in Q$ we have
		\begin{equation}
			\label{eqn:fonQ}
		f(y) = \frac{1}{2}\left( (y_1+1)^2+y_2^2 + y_1^2 + (y_2+1)^2 \right).
		\end{equation}
		Furthermore, for $y \in B_{1/5}(\bar x)$ the geodesic segment $[\bar x,y]$ is 
		the Euclidean line segment joining these points in $Q$. It follows that $d(\bar x,y) = \|y-\bar x\|$ and
		\begin{equation}
			\label{eqn:anglexpr}
			\cos \angle(\eta,\gamma_y) =  \frac{(1,1,0)^T(y-\bar x)}{\sqrt{2}\|y-\bar x\|} = 
			\frac{y_1+y_2 -1/2}{\sqrt{2}\|y-\bar x\|}.
	\end{equation}
	Thus after restricting to $y\in B_{1/5}(\bar x)$ and substituting \eqref{eqn:fonQ} and \eqref{eqn:anglexpr}
	into \eqref{eqn:finalsubgrad}, our desired inequality becomes
		\[\frac{3}{2}y_1 + \frac{3}{2}y_2 + \frac{7}{8} \leq \frac{1}{2}\left( (y_1+1)^2+y_2^2 + y_1^2 + (y_2+1)^2 \right)
		\text{ for all } y = (y_1,y_2,0) \in B_{1/5}(\bar x).\]
		This now holds by the Euclidean subgradient inequality for the differentiable convex function
		$h(a,b) = \frac{1}{2}\left( (a+1)^2+b^2 + a^2 + (b+1)^2 \right)$ at the point $(1/4,1/4)$.

\end{document}